\documentclass[11pt, dvipsnames]{amsart}
\usepackage{amsmath,amssymb,amsfonts, mathtools}
\usepackage[mathscr]{eucal}
\usepackage[margin=1in, marginparwidth=2cm, marginparsep=2mm]{geometry}
\usepackage[all]{xy}
\usepackage{bbm}
\usepackage{xparse}
\allowdisplaybreaks

\usepackage{tikz, tikz-cd}
\usetikzlibrary{matrix, arrows, decorations.pathmorphing, cd, patterns, calc, backgrounds, patterns}
\usetikzlibrary{decorations.pathreplacing, calc}
\usepackage{extarrows}
\tikzset{dummy/.style= {circle,fill,draw,inner sep=0pt,minimum size=1.2mm}}
\tikzset{vertex/.style={fill, circle, minimum size=.1cm, inner sep=0pt}}

\newcommand{\HG}{\mathrm{H}\mathbf{G}}
\newcommand{\TG}{\mathrm{T}\mathbf{G}}

\usepackage[pagebackref, colorlinks, citecolor=DarkOrchid, linkcolor=black, urlcolor=NavyBlue]{hyperref}
\hypersetup{linktoc=all,} %set to all if you want both sections and subsections linked

\hypersetup{
  colorlinks   = true, %Colours links instead of ugly boxes
  urlcolor     = blue, %Colour for external hyperlinks
  linkcolor    = PineGreen, %Colour of internal links
colorlinks, citecolor    = DarkOrchid %Colour of citations
}

\usepackage{comment}

\usepackage[final]{microtype}

\usepackage{enumerate}

\usepackage{appendix}

\numberwithin{equation}{subsection} 
\numberwithin{figure}{subsection}
\usepackage[nameinlink,capitalise,noabbrev]{cleveref}

\newcommand{\newrefformat}[2]{}

\theoremstyle{plain}
\newtheorem{theorem}[equation]{Theorem}
\newtheorem{corollary}[equation]{Corollary}
\newtheorem{proposition}[equation]{Proposition}
\newtheorem{lemma}[equation]{Lemma}

\newtheorem{introtheorem}{Theorem}

\theoremstyle{definition}
\newtheorem{definition}[equation]{Definition}

\newtheorem{observation}[equation]{Observation}
\newtheorem{construction}[equation]{Construction}
\newtheorem{remark}[equation]{Remark}

\newtheorem{notation}[equation]{Notation}

\newcommand{\DG}{{\Delta \mathbf{G}}}

\crefname{lemma}{Lemma}{Lemmas}
\crefname{theorem}{Theorem}{Theorems}
\crefname{definition}{Definition}{Definitions}
\crefname{proposition}{Proposition}{Propositions}
\crefname{remark}{Remark}{Remarks}
\crefname{corollary}{Corollary}{Corollaries}
\crefname{question}{Question}{Questions}
\crefname{equation}{Equation}{Equations}
\crefname{construction}{Construction}{Constructions}
\crefname{ex}{Example}{Examples}
\crefname{appsec}{Appendix}{Appendices}
\crefname{observation}{Observation}{Observations}
\crefname{subsection}{Subsection}{Subsections}
\crefname{section}{Section}{Sections}
\crefname{conjecture}{Conjecture}{Conjectures}
\crefname{lettertheorem}{Theorem}{Theorems}
\crefname{notation}{Notation}{Notations}

\usepackage[textwidth=18mm, textsize=tiny]{todonotes} %%% for 1in margin
\usepackage{graphicx,scalerel}
\newcommand\temp[1][.8]{\mathbin{\ThisStyle{\vcenter{\hbox{%
  \scalebox{#1}{$\SavedStyle\bullet$}}}}}%
  }
\newcommand{\smbullet}{{\temp}}

\newcommand{\HC}{\mathrm{HC}}
\newcommand{\HR}{\mathrm{HR}}

\newcommand{\Fun}{\mathrm{Fun}}

\newcommand{\HS}{\mathrm{H}\Sigma}
\newcommand{\DS}{{\Delta \Sigma}}
\newcommand{\DB}{{\Delta \mathrm{B}}}

\newcommand{\pull}{\arrow[dr, phantom, "\lrcorner", very near start]}
\newcommand{\DC}{{\Delta \mathrm{C}}}

\newcommand{\BDB}{B_{\DB}^\smbullet}
\newcommand{\BDC}{B_{\DC}^\smbullet}
\newcommand{\BDCu}{B^{\DC}_\smbullet}
\newcommand{\BDS}{B_{\DS}^\smbullet}
\newcommand{\BDG}{B_{\Delta \mathbf{G}}^\smbullet}
\newcommand{\HB}{\mathrm{HB}}

\newcommand{\cO}{\mathscr{
O
}}

\newcommand{\Bcyc}{B^\mathrm{cyc}}

\newcommand{\sdual}{\mathscr{I}}

\newcommand{\lax}{\mathrm{lax}}
\newcommand{\TS}{\mathrm{T}\Sigma}
\newcommand{\TB}{\mathrm{TB}}

\newcommand{\TF}{\mathrm{T}\varphi}
\newcommand{\THH}{\mathrm{THH}}
\newcommand{\TC}{\mathrm{TC}}
\newcommand{\HH}{\mathrm{HH}}
\newcommand{\Hom}{\mathrm{Hom}}

\newcommand{\Pic}{\mathrm{Pic}}
\newcommand{\Free}{\mathrm{Free}}

\DeclareMathOperator{\Env}{\mathrm{Env}}
\DeclareMathOperator{\Tor}{\mathrm{Tor}}

\newcommand{\op}{\mathrm{op}}

\newcommand{\Assoc}{\mathrm{Assoc}}

\newcommand{\Sp}{\mathrm{Sp}}

\newcommand{\Alg}{\mathrm{Alg}}

\newcommand{\Vect}{\mathrm{Vect}}

\newcommand{\cC}{\mathscr{C}}
\newcommand{\cD}{\mathscr{D}}

\newcommand{\id}{\mathrm{id}}

\newcommand{\Map}{\mathrm{Map}}

\newcommand{\Fin}{\mathrm{Fin}}

\tikzcdset{
  diagrams={>={Straight Barb[scale=0.5]}}
}
\tikzset{
  altstackar/.style={decorate, decoration={show path construction,
    lineto code={
      \path (\tikzinputsegmentfirst); \pgfgetlastxy{\xstart}{\ystart}
      \path (\tikzinputsegmentlast); \pgfgetlastxy{\xend}{\yend}
      \path ($(0,0)!1.5pt!(\ystart-\yend,\xend-\xstart)$); \pgfgetlastxy{\xperp}{\yperp}
      \foreach \n[evaluate=\n as \k using .5*#1-\n+.5] in {1,...,#1}{
        \ifodd\n{\draw[->, shorten <=2pt, shift={($\k*(\xperp,\yperp)$)}](\xstart,\ystart)--(\xend,\yend);}
        \else{\draw[<-, shorten >=2pt, shift={($\k*(\xperp,\yperp)$)}](\xstart,\ystart)--(\xend,\yend);}\fi
      }
    }
  }}, altstackar/.default={1}
}

\NewDocumentCommand{\tens}{e{_^}}{%
  \mathbin{\mathop{\otimes}\displaylimits
    \IfValueT{#1}{_{#1}}
    \IfValueT{#2}{^{#2}}
  }%
}

\keywords{Topological symmetric homology, topological braid homology, crossed simplicial group, free $\mathbb{E}_\infty$-ring, free $\mathbb{E}_2$-ring, monoidal envelope}
\subjclass[2020]{
Primary: 16E40, %(Co)homology of rings and associative algebras (e.g., Hochschild, cyclic, dihedral, etc.)
55P43, %Spectra with additional structure
57T30. %% Bar and cobar constructions
Secondary: 
18M75 %Topological and simplicial operads
18N60, %infinity categories
55P42. %Stable homotopy theory, spectra
}

\author[G.\ Angelini-Knoll]{Gabriel Angelini-Knoll}
\address{Department of Mathematics, Applied Mathematics, and Statistics, Case
Western Reserve University, Cleveland, OH, USA}
\email{gja39@case.edu}

\author[D.\ Chan]{David Chan}
\address{Department of Mathematics, Michigan State University, 619 Red Cedar Road, East Lansing, MI 48824, USA}
\email{chandav2@msu.edu}

\author[T.\ Gerhardt]{Teena Gerhardt}
\address{Department of Mathematics, Michigan State University, 619 Red Cedar Rd, East Lansing, MI 48824, USA}
\email{teena@math.msu.edu}

\author[M.\ Merling]{Mona Merling} 
\address{Department of Mathematics, University of Pennsylvania, 209 33rd St, Philadelphia, PA 19104, USA}
\email{mmerling@math.upenn.edu}

\author[M.\ P\'eroux]{Maximilien P\'eroux}
\address{Department of Mathematics, Michigan State University, 619 Red Cedar Rd, East Lansing, MI 48824, USA}
\email{peroux@msu.edu}

\title[Topological symmetric and braid homologies]{Topological symmetric and braid homologies}

\begin{document}

\begin{abstract}
We identify topological symmetric homology as the free $\mathbb{E}_\infty$-algebra on an $\mathbb{E}_1$-algebra and topological braid homology as the free $\mathbb{E}_2$-algebra on an $\mathbb{E}_1$-algebra. 
In this way, topological symmetric homology and topological braid homology can be regarded as 
variants of $1$-dimensional representation homology. 
In order to identify topological braid homology as the free $\mathbb{E}_2$-algebra on an $\mathbb{E}_1$-algebra, we prove that the $\mathbb{E}_2$-monoidal envelope of the associative operad can be identified with the braided crossed simplicial group. 
Using this, we also compute the topological braid homology of grouplike $\mathbb{E}_1$-spaces. Further, we develop computational tools for topological symmetric and braid homologies. 
These tools allow us to perform low-degree computations of topological symmetric homology and prove that it is not Morita invariant.  
We also compute the topological $\Delta \mathbf{G}$-homology of Thom spectra in general and produce explicit formulas in the case of topological symmetric and braid homologies. 
\end{abstract}

\maketitle

\section{Introduction}

McClure--Schw\"anzl--Vogt~\cite{MSV97}  proved that the topological Hochschild homology of a commutative ring spectrum is the free commutative ring spectrum with circle action. 
This result was strengthened to incorporate genuine equivariant structure in~\cite{ABGHLM18}. 
The universal property of topological Hochschild homology is closely related to its cyclotomic structure~\cite{NS18}.  
This allows for the definition of topological cyclic homology which plays an essential role in many computations of algebraic $K$-theory~\cite{DGM13}. 

Topological Hochschild homology is the geometric realization of the cyclic bar construction, which is a bar construction associated to Connes' cyclic category. In~\cite{FL91}, the authors generalize the cyclic category by replacing the role of the cyclic groups by more general automorphism groups. 
These generalizations are called \emph{crossed simplicial groups} since the various automorphisms form a simplicial set where each level is a group and the face and degeneracy maps are akin to crossed homomorphisms. 
The geometric realization of crossed simplicial groups still yields topological groups.
In \cite{THG}, the first, fourth and fifth authors produce topological homology theories for arbitrary crossed simplicial groups, and establish foundational properties of these constructions. 
In addition to new constructions, such as topological quaternionic and hyperoctohedral homologies, the authors recover Real topological Hochschild homology as an instance of this construction associated to the dihedral crossed simplicial group.
In~\cite{AKGH25}, the first author, third author and Hill prove that Real topological Hochschild homology has a universal property as the free ring spectrum with $O(2)$-action on a genuine commutative $C_2$ ring spectrum. 

In this paper, we focus on two of the topological theories, developed in \cite{THG}, associated to crossed simplicial groups: topological symmetric homology $\TS$, and topological braid homology $\TB$. These theories produce a spectrum from any $\mathbb{E}_1$-ring spectrum, and are computed as an explicit homotopy colimit. As a starting point for the present work, we prove a universal property of topological symmetric homology. 

\begin{introtheorem}[\cref{Einfty-structure,topological symmetric homology is topological representation homology}]\label{intro theorem A}
For an  $\mathbb{E}_1$-ring spectrum $R$,  the topological symmetric homology $\TS(R)$ has a canonical $\mathbb{E}_{\infty}$-ring spectrum structure, and it is the free $\mathbb{E}_\infty$-ring spectrum on $R$.
\end{introtheorem}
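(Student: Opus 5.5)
Throughout, $\Delta\mathrm{S}$ denotes the symmetric crossed simplicial group, i.e.\ Fiedorowicz--Loday's category of finite ordinals with automorphisms twisted by permutations. The plan is to identify both sides as left Kan extensions along functors out of $\Delta\mathrm{S}$ and its relatives, then compare.

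\textbf{Step 1: rewrite $\TS(R)$ as a homotopy colimit.} Recall that $\TS(R)$ is the homotopy colimit over $\Delta\mathrm{S}$ of the symmetric bar construction $\BDS(R)\colon \Delta\mathrm{S}\to\Sp$, which sends $[n]\mapsto R^{\otimes(n+1)}$. A morphism of $\Delta\mathrm{S}$ is a map of finite sets with a total order on each fiber. It therefore acts on tensor powers by multiplying the factors in each fiber in the prescribed order. Using the standard equivalence of $\Delta\mathrm{S}$ with the category $\mathcal{F}(\mathrm{as})$ of finite sets and maps with ordered fibers, which is the category of operators of the associative operad in the sense of Pirashvili--Richter, I would identify $\BDS(R)$, up to an op, with the functor $\mathcal{F}(\mathrm{as})\to\Sp$ that $R$ determines as an $\mathbb{E}_1$-algebra.

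\textbf{Step 2: identify $\mathcal{F}(\mathrm{as})$ with the symmetric monoidal envelope.} The category $\mathcal{F}(\mathrm{as})$ is the underlying category of the symmetric monoidal envelope $\Env(\mathbb{E}_1)$, with monoidal structure given by disjoint union. By the universal property of the envelope, an $\mathbb{E}_1$-algebra $R$ is the same as a symmetric monoidal functor $\Env(\mathbb{E}_1)\to\Sp$. The inclusion of operads $\mathbb{E}_1\to\mathbb{E}_\infty$ induces $\Env(\mathbb{E}_1)\to\Env(\mathbb{E}_\infty)=\Fin$, which is symmetric monoidal. Operadic left Kan extension along this functor computes the free $\mathbb{E}_\infty$-algebra on $R$. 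Its underlying object is the pointwise left Kan extension evaluated at the one-element set $\ast\in\Fin$, namely
\[
\operatorname*{colim}_{\mathcal{F}(\mathrm{as})_{/\ast}} R^{\otimes(-)}.
\]
Since $\ast$ is terminal, the slice over $\ast$ is the whole category $\mathcal{F}(\mathrm{as})$. The colimit therefore reduces to the colimit over $\mathcal{F}(\mathrm{as})$, which by Step 1 is $\TS(R)$.

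The free algebra carries its $\mathbb{E}_\infty$-structure by construction, and transporting it gives the canonical $\mathbb{E}_\infty$-ring structure on $\TS(R)$. Alternatively, one can see this structure directly: disjoint union makes $\Delta\mathrm{S}$ symmetric monoidal, and $\BDS(R)$ is a symmetric monoidal functor, so its colimit inherits an $\mathbb{E}_\infty$-structure. The two structures agree because both are induced by the same symmetric monoidal Kan extension.

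\textbf{Main obstacle.} The hardest step is justifying that the operadic left Kan extension is computed pointwise by this colimit, which requires $\mathbb{E}_1\to\mathbb{E}_\infty$ to be suitably well behaved. Lurie's theorem (HA~3.1.3) applies because $\Sp$ is presentably symmetric monoidal, so $\otimes$ preserves colimits in each variable. The free algebra's value at $\ast$ is then a colimit over the category of active arrows into $\ast$. That category is exactly $\mathcal{F}(\mathrm{as})$ with all maps to a point, each map carrying the data of a total order. It remains to match this $\infty$-categorical colimit with the model used in \cite{THG}. For that I would check that the strict functor $\BDS(R)$ is a point-set model of the envelope functor, for instance via a rigidification of $R$ as a strictly associative ring spectrum, together with the fact that homotopy colimits of such diagrams compute $\infty$-categorical colimits.
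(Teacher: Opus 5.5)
Your strategy is the paper's: use $\DS_+\simeq\Env(\Assoc)$, realize the free $\mathbb{E}_\infty$-algebra on $R$ as a colimit of $F_R$ over the envelope, and get the $\mathbb{E}_\infty$-structure by Day convolution. The paper proves $f_!R\simeq p_!F_R$ in \cref{representation homology thm} by a Yoneda argument and notes it is a reformulation of \cite[3.1.3.3]{HA}, which is what you invoke. But there is a real gap. You identify $\mathcal{F}(\mathrm{as})$ with $\DS$ in Step~1 and with the underlying category of $\Env(\Assoc)$ in Step~2, and these differ by one object. Whichever convention you take, one of the two identifications is off by $[-1]$. The category of active arrows into $\langle 1\rangle$ contains $\langle 0\rangle\to\langle 1\rangle$, the empty set. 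This is the unit for disjoint union, and $F_R$ takes the value $\mathbb{S}$ on it. So $\Env(\Assoc)\simeq\DS_+$, and the HA~3.1.3 formula produces $\operatorname{colim}_{\DS_+}F_R$, whereas $\TS(R)$ is by definition $\operatorname{colim}_{\DS}F_R$. Since $[-1]$ is initial in $\DS_+$,
\[
\operatorname{colim}_{\DS_+}F_R\simeq \mathbb{S}\sqcup_{\Sigma^{\infty}_{+}B\DS}\TS(R),
\]
so the two colimits agree only after a further argument.

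The missing step is that $\DS\hookrightarrow\DS_+$ is cofinal, which is \cref{lemma:THG}, the lemma the paper's proof rests on. By Quillen's Theorem A, the comma categories under $[n]$ for $n\ge 0$ are $\DS_{[n]/}$, which have an initial object. The comma category under $[-1]$, however, is all of $\DS$. So the real input needed is the weak contractibility $B\DS\simeq *$, a nontrivial fact from \cite{FL91}, and your proposal never supplies it. The same omission affects your direct construction of the $\mathbb{E}_\infty$-structure. Disjoint union on $\DS$ has no unit (its unit is $[-1]$), so Day convolution must be applied to the strong symmetric monoidal functor $F_R$ on $\DS_+$, and the result identified with $\TS(R)$ by the same cofinality.

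By contrast, the rigidification you single out as the main obstacle is unnecessary. $\TS(R)$ is defined directly as the $\infty$-categorical colimit of $\DS\hookrightarrow\DS_+\simeq\Env(\Assoc)\xrightarrow{F_R}\Sp$, so no point-set model and no opposite category enter. The agreement of the two $\mathbb{E}_\infty$-structures is exactly the chain $\Map(p_!F_R,A)\simeq\Map_{\mathrm{lax}}(F_R,p^*A)\simeq\Map_{\otimes}(F_R,p^*A)\simeq\Map_{\mathbb{E}_1}(R,f^*A)$. Its key point is that strong monoidal functors form a full subcategory of lax ones.
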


The universal property of $\TS$ we prove can be viewed as a topological refinement of work of Berest and Ramadoss~\cite{Berest-Ramadoss}. In~\cite{Berest-Ramadoss}, they characterize (algebraic) symmetric homology, denoted $\HS$, in terms of $1$-dimensional representation homology in characteristic zero. Here $1$-dimensional representation homology is the derived free commutative algebra on an associative algebra. Representation homology was defined in~\cite{BKR13} and has been studied extensively, e.g.~\cite{BFPRW17,BRY22,Berest-Ramadoss,BR24,Li26}.  In~\cite{Berest-Ramadoss}, Berest and Ramadoss also produce a character map from cyclic homology to symmetric homology. The topological analogue of cyclic homology is the homotopy orbits of the $S^1$-action on topological Hochschild homology, which we denote by $\mathrm{TC}^+$. We produce the topological analogue of the character map 
\[ 
    \TC^+(R)\to \TS(R) 
\]
in \cref{prop:character-map}.

The starting point for \cref{intro theorem A} is a result of~\cite{THG}, which identifies the symmetric crossed simplicial group with the symmetric monoidal envelope of the associative operad. To obtain similar results about topological braid homology, we prove an analogous interpretation of the braided crossed simplicial group. We write $\DB_+$ for the augmented braided category, see~\cref{augmented-braid}.  

\begin{introtheorem}[\cref{thm:envelope}]
The $\mathbb{E}_2$-monoidal envelope of the associative operad is equivalent to $\DB_+$.  
\end{introtheorem}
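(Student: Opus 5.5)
We will compute the $\mathbb{E}_2$-monoidal envelope $\Env_{\mathbb{E}_2}(\Assoc)$ via the Dunn--Lurie additivity/relative tensor formula. The model: for an $\infty$-operad $\cO$, the $\mathbb{E}_2$-monoidal envelope is the free $\mathbb{E}_2$-monoidal $\infty$-category on $\cO$, i.e. $\mathbb{E}_2 \otimes_{\mathbb{E}_1}$-type pushforward. Concretely, we use that $\mathbb{E}_2$-monoidal categories are $\mathbb{E}_1$-algebras in $\mathbb{E}_1$-monoidal categories, and that the free $\mathbb{E}_2$-monoidal category on a single object is $\coprod_n B\mathrm{Br}_n$ (Fiedorowicz/Joyal--Street). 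The symmetric analogue from \cite{THG} identified the symmetric monoidal envelope of $\Assoc$ with $\DS_+$, whose objects are finite sets $[n]$ and whose morphisms $[m]\to[n]$ are maps of finite sets together with linear orderings on fibers. We will follow the same template, replacing orderings-plus-permutations with braids.

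Step one: describe $\Env_{\mathbb{E}_2}(\Assoc)$ explicitly as the category whose objects are natural numbers $n\ge 0$ and whose morphisms $m\to n$ are pairs consisting of a morphism in $\Env(\Assoc)=\Delta_+$-style data (an ordered-fiber map, i.e. a morphism of the associative envelope, equivalently an augmented simplicial map $[m-1]\to[n-1]$ after reindexing) precomposed with a braid in $\mathrm{Br}_m$. Formally I would use the formula $\Env_{\mathcal P}(\cO)\simeq \mathcal P_{\mathrm{act}}\times_{\ldots}$ — namely the envelope of $\cO$ relative to an operad map $\cO\to\mathcal P$ is the pushforward $\mathcal P^\otimes_{\mathrm{act}}\times_{\mathcal P^\otimes}\cO^\otimes$-type construction (Lurie HA 2.2.4 and its relative version), applied to $\Assoc\to\mathbb{E}_2$ induced by $\mathbb{E}_1\to\mathbb{E}_2$. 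Since $\mathbb{E}_2$ is a 1-truncated operad (spaces of operations are $K(\mathrm{PBr}_n,1)$ up to $\Sigma_n$), the envelope is a $(2,1)$-category, and we must check it is in fact a 1-category: mapping spaces are $\mathbb{E}_2(k)\times_{\Sigma_k}$(orderings), i.e. $\mathrm{Conf}_k(\mathbb R^2)/$(nothing) with a chosen ordering, which is contractible-fibered over discrete sets after quotienting by the free $\Sigma_k$-action on orderings is absorbed; more precisely $\Assoc(k)\times_{\Sigma_k}$ is replaced by the homotopy fiber of $B\mathrm{Br}_k\to B\Sigma_k$ over orderings, giving discrete sets $\mathrm{Br}_k$-torsors. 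Thus mapping spaces are discrete: $\Hom(m,n)=\coprod_{f}\prod_i \mathrm{Br}$-data, which assembles to $\mathrm{Br}_m\times\Hom_{\Delta_+}$-shaped sets.

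Step two: compare with $\DB_+$ from \cref{augmented-braid}. By the Fiedorowicz--Loday factorization, every morphism of $\DB$ factors uniquely as an automorphism (a braid) followed by a simplicial map, with composition governed by the crossed simplicial structure maps $\mathrm{Br}_n\to\mathrm{Br}_m$ along simplicial maps (cabling/deleting strands). I would build a functor $\DB_+\to\Env_{\mathbb{E}_2}(\Assoc)$ identity on objects, sending $\Delta_+$ to the envelope of $\Assoc$ (via the known $\Env(\Assoc)\simeq\Delta_+$ variant) and braids to the braid automorphisms of $X^{\otimes n}$ in the free $\mathbb{E}_2$-monoidal category, then check it is bijective on hom-sets using the unique factorization on both sides.

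The main obstacle is the compatibility of composition: verifying that the crossed relations in $\DB$ (how a braid pulls past a face or degeneracy, i.e. strand doubling and deletion) agree with the composition in the envelope coming from operadic composition in $\mathbb{E}_2$. I would handle this by computing, in the little disks model, that inserting an ordered configuration into a strand of a braid yields the cabled braid, which is exactly the structure map defining $\DB$; the degeneracy/unit case corresponds to forgetting a strand.
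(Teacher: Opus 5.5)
Your architecture matches the paper's. You use Lurie's pullback model $\widetilde{\Assoc}^\otimes\times_{N^\otimes(\mathbb{E}_2)}\mathrm{Act}(N^\otimes(\mathbb{E}_2))$ and the 1-truncatedness of $\mathbb{E}_2$ to get a 1-category. You then use the unique factorization $\phi\circ b$, and cabling and strand deletion at simplicial level 1 to recover $(\phi',b')\circ(\phi,b)=(\phi'\circ b'^\ast\phi,\,\phi^\ast(b')\,b)$. The additivity remarks in your first paragraph are never used.

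However, the step that carries the content, the identification of the hom-sets, is argued incorrectly.
\begin{itemize}
\item The homotopy fiber of the map of classifying spaces induced by $\pi\colon B_k\to\Sigma_k$ is $N(\Sigma_k/\!/B_k)\simeq K(PB_k,1)$. That is $\mathbb{E}_2(k)$ itself, which is not discrete.
\item Your formula $\coprod_f\prod_i(\text{braid data})$ attaches braids to the fibers of $f$. Over the two bijections $\{1,2\}\to\{1,2\}$ it therefore gives only two automorphisms of $\langle 2\rangle$, whereas $\mathrm{Aut}_{\DB_+}(\langle 2\rangle)=B_2\cong\mathbb{Z}$.
\item A map with linearly ordered fibers is a morphism of $\DS_+$, not of $\Delta_+$. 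Precomposing it with an arbitrary braid double counts the permutation.
\end{itemize}

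The missing idea is where the braid lives. Write $c_n=(\mathbf{n},\alpha_n)$ with $\alpha_n=(a_n,\mathrm{id})$. A morphism $c_n\to c_m$ consists of two pieces of data:
\begin{itemize}
\item an active $\beta\in\Assoc^\otimes(\mathbf{n},\mathbf{m})$, i.e.\ a morphism of $\DS_+$;
\item the 2-cell of the square, namely a path in the connected 1-type $\mathbb{E}_2(n)=N(\Sigma_n/\!/B_n)$ from $\alpha_n$ to $\alpha_m\circ\beta$.
\end{itemize}
Discreteness holds because path spaces in a 1-type are 0-truncated and $\widetilde{\Assoc}^\otimes$ has discrete mapping spaces. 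The path classes are the braids $b\in B_n$ with $\pi(b)$ equal to the permutation underlying $\beta$, a $PB_n$-torsor for each $\beta$. So $\beta$ is determined by $(\phi,\pi(b))$, and
$\Hom(c_n,c_m)\cong\Hom_{\DS_+}(\langle n\rangle,\langle m\rangle)\times_{\Sigma_n}B_n\cong\Hom_{\Delta_+}(\langle n\rangle,\langle m\rangle)\times B_n$.
Your cabling computation only makes sense in this description. The braid of the second morphism is a path in $\Map(\mathbf{m},\mathbf{1})$ and must be transported along $-\circ\beta$ (precomposition with $s_0\beta$) before being concatenated with the first.

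Second, the theorem asserts an $\mathbb{E}_2$-monoidal equivalence, and your plan yields at most an equivalence of underlying categories. Your Step two in fact presupposes the missing computation. Sending $t^i$ to ``braid automorphisms of $X^{\otimes n}$'' requires knowing two things about $h\Env_{\mathbb{E}_2}(\Assoc)$:
\begin{itemize}
\item that $c_n\otimes c_m\simeq c_{n+m}$;
\item that the braiding is the square labelled by $b_{n,m}$, obtained by feeding the 1-simplex $(\mathrm{id},\sigma)$ of $\mathbb{E}_2(2)$ and the degenerate simplices $s_0(\mathrm{id}_n),s_0(\mathrm{id}_m)$ into the operad structure map.
\end{itemize}
This is exactly the final part of the paper's proof. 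With that in hand, a cleaner route is to obtain the comparison from freeness. By \cref{braided-observation}, the monoid $c_1$ induces a strong braided monoidal functor $\DB_+\to h\Env_{\mathbb{E}_2}(\Assoc)$. Monoidality and the $\DB$ relations then come for free, and what remains is to compute the functor on $\phi$ and $b$ and check bijectivity on hom-sets via the description above.
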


In other words, given any $\mathbb{E}_2$-monoidal $\infty$-category $\cC$, an $\mathbb{E}_2$-monoidal functor $\DB_+\to \cC$ is precisely the data of an $\mathbb{E}_1$-algebra in $\cC$. 
Since we have proven that $\TS(R)$ is the free $\mathbb{E}_{\infty}$-ring spectrum on an $\mathbb{E}_1$-ring spectrum $R$, it is natural to ask whether there are analogous constructions for the free $\mathbb{E}_n$-ring spectra on an $\mathbb{E}_1$-ring spectrum. In the case of $\mathbb{E}_2$, we show that topological braid homology plays this role. This result extends unpublished work of \cite{Fie} to the topological setting. 

\begin{introtheorem}[\cref{TB-rep-homology}]
 For an $\mathbb{E}_1$-ring spectrum $R$, the topological braid homology $\TB(R)$ has a canonical $\mathbb{E}_2$-ring spectrum structure and it is the free $\mathbb{E}_2$-ring spectrum  on $R$. 
\end{introtheorem}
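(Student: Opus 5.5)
The plan is to deduce the statement formally from the envelope theorem (\cref{thm:envelope}) together with the standard description of free algebras over a map of $\infty$-operads as operadic left Kan extensions along monoidal envelopes. We may assume from \cite{THG} that $\TB(R)$ is the homotopy colimit over $\DB$ (equivalently $\DB_+$, as explained below) of the functor $\DB\to\Sp$ sending $[n]\mapsto R^{\otimes(n+1)}$, with structure maps given by multiplication and by the braid-group actions permuting tensor factors.

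\textbf{Step 1: the functor attached to $R$.} Regard $R$ as an $\mathbb{E}_1$-algebra in $\Sp$. The universal property of the $\mathbb{E}_2$-monoidal envelope then supplies an $\mathbb{E}_2$-monoidal functor
\[
F_R\colon \Env_{\mathbb{E}_2}(\Assoc)\simeq\DB_+\longrightarrow\Sp .
\]
I will check that its restriction to $\DB\subset\DB_+$ is the functor $[n]\mapsto R^{\otimes(n+1)}$ used to define $\TB$; this means matching generators under the equivalence of \cref{thm:envelope}. On the augmentation object $[-1]$ the functor takes the value $\mathbb{S}$. Since $[-1]$ is initial in $\DB_+$ and $\DB$ is sifted, or at least weakly contractible in a cofinal way, the colimit over $\DB_+$ should agree with the colimit over $\DB$. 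I would verify this cofinality directly, or alternatively define $\TB$ via $\DB_+$ from the outset if \cite{THG} permits.

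\textbf{Step 2: the $\mathbb{E}_2$-structure.} The colimit $\mathrm{colim}_{\DB_+}F_R$ is the left Kan extension of $F_R$ along the $\mathbb{E}_2$-monoidal functor $\DB_+\to *$. The tensor product on $\Sp$ preserves colimits in each variable separately, so Lurie's operadic left Kan extension (HA~3.1.3) shows that this Kan extension is computed pointwise and inherits an $\mathbb{E}_2$-algebra structure. This is the $\mathbb{E}_2$-monoidal version of Day convolution, where the terminal category $*$ is the unit $\mathbb{E}_2$-monoidal category. This gives the canonical $\mathbb{E}_2$-ring structure on $\TB(R)$.

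\textbf{Step 3: freeness.} I will use the general fact (HA~3.1.3.5 and the envelope formula for free algebras) that the left adjoint to restriction
\[
\Alg_{\mathbb{E}_2}(\Sp)\to\Alg_{\mathbb{E}_1}(\Sp),
\]
induced by the operad map $\Assoc\simeq\mathbb{E}_1\to\mathbb{E}_2$, is computed as follows. For the free algebra $\mathrm{Fr}(R)$, its underlying spectrum is the colimit over the slice $\Env_{\mathbb{E}_2}(\Assoc)^{\mathrm{act}}_{/\langle 1\rangle}$ of the extended functor. Because $\mathbb{E}_2$ has a single color, this slice is $\Env_{\mathbb{E}_2}(\Assoc)$ itself, with active maps only. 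The required identification is then $\mathrm{Fr}(R)\simeq\mathrm{colim}_{\DB_+}F_R$. Concretely, I will show that for every $\mathbb{E}_2$-ring $A$,
\[
\Map_{\Alg_{\mathbb{E}_2}}(\mathrm{colim}\,F_R,A)\simeq\Map_{\Alg_{\mathbb{E}_2}\text{-}\mathrm{mon}}(F_R,\underline{A})\simeq\Map_{\Alg_{\mathbb{E}_1}}(R,A),
\]
where $\underline{A}$ is the constant functor. The first equivalence is the adjunction from Step 2. The second follows from the universal property of the envelope: natural transformations of lax $\mathbb{E}_2$-monoidal functors out of $\Env$ correspond to maps of the restricted $\Assoc$-algebras.

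\textbf{Main obstacle.} I expect the main difficulty to lie in Steps 1 and 3. In Step 1 one has to verify that the equivalence of \cref{thm:envelope} matches the explicit functor defining $\TB$, including the braid actions. In Step 3 one must make the operadic Kan extension argument rigorous, namely that lax $\mathbb{E}_2$-monoidal functors $\DB_+\to\Sp$ correspond to $\mathbb{E}_1$-algebras, and that colimits over active slices agree with the plain colimit. If the cofinality of $\DB\subset\DB_+$ fails, I would first try to absorb the augmentation into the definition of $\TB$, as suggested in Step 1.
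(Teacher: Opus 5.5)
Your proposal is correct and is essentially the paper's argument: $\TB(R)\simeq\mathrm{colim}_{\DB_+}F_R^{\mathbb{E}_2}$ by cofinality of $\DB\subset\DB_+$ (which needs only that $\DB$ is weakly contractible, not sifted), the $\mathbb{E}_2$-structure comes from Day convolution, and freeness follows from $p_!\dashv p^*$ together with the envelope's universal property and Yoneda. The one step to tighten is the identification in Step 1, which should be done as in \cref{the map DB to DS} rather than by matching generators (agreement on generators does not determine a functor into $\Sp$ up to coherent equivalence): $\DB_+\to\DS_+$ is braided monoidal, so $F_R\circ(\DB_+\to\DS_+)$ is strong $\mathbb{E}_2$-monoidal with restriction $R$ and hence equivalent to $F_R^{\mathbb{E}_2}$ by the universal property; similarly, Step 3 needs that $F_R$ is strong, i.e.\ free on $R$ among lax functors, not that all lax functors correspond to $\mathbb{E}_1$-algebras.
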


Besides these qualitative results, we also provide computational tools for topological symmetric homology and topological braid homology. The construction of $\TS(R)$ as a homotopy colimit of a symmetric bar construction gives us access to a spectral sequence which we can use to understand the low-degree homotopy groups. In particular, for a connective $\mathbb{E}_1$-ring spectrum $R$ we show in \cref{cor:low-degrees} that there is a canonical map 
\[
    \TS_s(R)\to \HS_s(\pi_0R)
\]
that induces isomorphisms in degrees $0\le s\le 2$. As a consequence, we prove in \cref{cor:not-morita} that topological symmetric homology is not Morita invariant as a functor on connective $\mathbb{E}_1$-ring spectra. As part of our tool kit, in \cref{prop:Bokstedt} we produce  a B\"okstedt-type spectral sequence computing the homology of $\TS(R)$. The $E^2$-page of this spectral sequence can be described in terms of (algebraic) symmetric homology.  Many of these computational tools exist in the greater generality of twisted symmetric homologies as constructed in~\cite{THG}. 

When the input ring spectrum is a Thom spectrum, we are able to give more complete computations. Recall that when $R$ is a multiplicative Thom spectrum, a theorem of Blumberg, Cohen, and Schlichtkrull \cite{BCS} gives an efficient computation of the topological Hochschild homology of $R$. Essentially, they show that the Thom spectrum functor commutes with the cyclic bar construction, in an appropriate sense.  We extend their result to the (positive) topological $\DG$-homology of Thom spectra for all crossed simplicial groups $\DG$. The definition of (positive) topological $\DG$-homology is recalled in \cref{def: DG-homology}.  Let $BF$ denote the classifying space of stable spherical fibrations.

\begin{introtheorem}[{\cref{BCS}}]
    Let $\DG$ be a crossed simplicial group.  Let $f\colon X\to BF$ be a map of $\mathbb{E}_1$-spaces with twisted $G_0$-action and let $Mf$ denote the Thom spectrum of $f$.  Then there is an equivalence of spectra
\[
    M(\HG^+(f))\simeq \TG^+(Mf)\,.
\]    
Moreover, if $\DG$ is self-dual then there is an equivalence of spectra with $|\mathbf{G}_{\smbullet}|$-action
\[
    M(\mathrm{TH}\mathbf{G}(f))\simeq \mathrm{TH}\mathbf{G}(Mf)\,.
\] 
\end{introtheorem}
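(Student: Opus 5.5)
The proof will follow the strategy of Blumberg--Cohen--Schlichtkrull: show that the Thom spectrum functor commutes with the relevant bar construction levelwise, and then that it commutes with the homotopy colimit used to define $\TG^+$. By \cref{def: DG-homology}, $\TG^+(Mf)$ is a colimit over $\DG$ (or $\DG^{\op}$) of a twisted bar construction $\BDG(Mf)$. In level $[n]$ this is $Mf^{\wedge(n+1)}$. The face maps come from the $\mathbb{E}_1$-multiplication. The automorphisms in $G_n$ act by permuting factors and by the twisted $G_0$-action on each factor. Likewise $\HG^+(f)$ is the colimit in spaces over $BF$ of the analogous diagram $[n]\mapsto X^{n+1}\to BF$. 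The structure maps are built from the $\mathbb{E}_1$-structure on $X$, the twisted $G_0$-action, and the $\mathbb{E}_\infty$ (or at least sufficiently commutative) structure on $BF$, which gives the comparison maps to $BF$.

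\textbf{Step 1.} Work in the $\infty$-categorical model of the Thom spectrum functor $M\colon \mathcal{S}_{/BF}\to \Sp$, sending $g\colon Y\to BF$ to the colimit of $Y\to BF\to \Pic(\mathbb{S})\to \Sp$. Two properties will be used.
\begin{enumerate}[(i)]
\item $M$ preserves all colimits, being a left Kan extension.
\item $M$ is symmetric monoidal for the product on $\mathcal{S}_{/BF}$ induced by the $\mathbb{E}_\infty$-structure of $BF$, so that $M(g\times h)\simeq Mg\wedge Mh$.
\end{enumerate}

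\textbf{Step 2.} Show that $M$ carries the $\DG$-diagram $[n]\mapsto (X^{n+1}\to BF)$ to $\BDG(Mf)$ as a diagram, not merely levelwise. The cleanest route is as follows. An $\mathbb{E}_1$-space with twisted $G_0$-action over $BF$ is an algebra in $\mathcal{S}_{/BF}$ with the appropriate extra structure. The bar construction $\BDG(-)$ is defined functorially for such algebras in any symmetric monoidal $\infty$-category, for instance via the envelope description: a symmetric monoidal functor out of the envelope, restricted along $\DG$. Symmetric monoidal functors preserve these algebras and commute with the construction. Applying (ii) then gives $M\circ B_{\DG}(f)\simeq \BDG(Mf)$.

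\textbf{Step 3.} Apply (i) to commute $M$ with the colimit over $\DG$. This yields $M(\HG^+(f))\simeq \TG^+(Mf)$.

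For the self-dual case, $\mathrm{TH}\mathbf{G}$ is obtained by restricting along the duality $\Delta^{\op}\to\DG$, or equivalently via a cyclic-type bar construction whose realization carries a $|\mathbf{G}_\smbullet|$-action, as in the case of $\THH$. Steps 2 and 3 give an equivalence of $\DG^{\op}$-diagrams, hence of their realizations, and the equivalence is natural in the diagram. Since the $|\mathbf{G}_\smbullet|$-action on the realization of a $\DG^{\op}$-object is functorial in the object (the action arises from the left Kan extension along $\Delta^{\op}\to\DG^{\op}$ and the corresponding structure on realizations), the equivalence is $|\mathbf{G}_\smbullet|$-equivariant. Here one also uses that the colimit over $BF$ of the $\mathbf{G}$-action is compatible with $M$ being a functor of $\infty$-categories, so that actions transport automatically.

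\textbf{Main obstacle.} The hardest part will be Step 2: making precise that the twisted $G_0$-action and the resulting twisted (crossed) functoriality of $\BDG$ are transported by $M$. The twisting involves $G_0$ acting through anti-automorphisms, for example reversal of multiplication in the dihedral case. For $M$ to respect this, it must intertwine the corresponding involution on $\mathcal{S}_{/BF}$ with that on $\Sp$, and this compatibility is what has to be checked. My first attempt would be to formulate the twisted $G_0$-action as an equivariant structure on the $\mathbb{E}_1$-operad, with $G_0$ acting on $\Assoc$. Then $M$, being symmetric monoidal and hence compatible with the induced actions on $\Alg_{\Assoc}$, automatically preserves it. If that fails, I would fall back on the point-set model of \cite{BCS} using $\mathcal{I}$-spaces and check the face, degeneracy and automorphism maps by hand.
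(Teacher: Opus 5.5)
Your proposal is correct and follows the paper's argument. The paper proves that any colimit-preserving strong symmetric monoidal functor $F$ satisfies $\BDG(F(R))\simeq F(\BDG(R))$: it writes the bar construction as $\DG\to(\Delta\lambda_0\wr\Sigma)_+\simeq\Env(\Assoc_{\lambda_0})\to\Env(\cC)\xrightarrow{\otimes}\cC$ and uses functoriality of $\Env$ together with naturality of $\otimes$. It then specializes to $M$, and handles the self-dual case by precomposing with $\Delta^{\op}\to\DG^{\op}\cong\DG$. Your anticipated obstacle dissolves just as you suggest, because the twisted $G_0$-action is encoded in the operad $\Assoc_{\lambda_0}$, so $M$ transports it automatically with no separate check of involutions.
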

In the case where $\DG = \Delta \mathrm{D}$, the dihedral crossed simplicial group, this recovers some results from \cite{HKZ} on Real topological Hochschild homology of Thom spectra. 

Blumberg, Cohen, and Schlichtkrull further show that if $f\colon X\to BF$ is an $\mathbb{E}_3$-map classifying a stable spherical fibration on a base space $X$, then $\THH(Mf)\simeq Mf\otimes BX_+$~\cite{BCS}. We prove analogous results when $\DG = \DS$ or $\DB$. For any pointed space $Y$, let $QY = \Omega^{\infty}\Sigma^{\infty}Y$.  When $Y$ is an $\mathbb{E}_{\infty}$-space, there is a natural map $QY\to Y$, and we denote the homotopy fiber by $\widetilde{Q}Y$.  Similarly, we write $JY = \Omega \Sigma Y$, and if $Y$ is an $\mathbb{E}_1$-space there is a natural map $JY\to Y$ with homotopy fiber $\widetilde{J}Y$. 
\begin{introtheorem}[{\cref{thm:TS-thomspectra} and \cref{thm:TB-thomspectra}}]
    If $f\colon X\to BF$ is an $\infty$-loop map, with Thom spectrum $Mf$, then there is an equivalence of spectra
    \[
        \TS(Mf)\simeq Mf\otimes \Omega \widetilde{Q}BX_+.
    \]
    Similarly, if $f$ is a two-fold loop map then there is an equivalence of spectra
    \[
        \TB(Mf)\simeq Mf\otimes \Omega \widetilde{J}BX_+.
    \]
\end{introtheorem}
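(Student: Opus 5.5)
The plan is to deduce both equivalences from the Thom spectrum theorem for topological $\DG$-homology (\cref{BCS}) together with the universal properties of $\TS$ and $\TB$ (\cref{intro theorem A} and its $\mathbb{E}_2$ analogue). The key point is that the Thom spectrum functor $M\colon \mathcal{S}_{/BF}\to \Sp$ is symmetric monoidal and preserves colimits. Hence it commutes with free constructions and with the homotopy colimits defining the $\DG$-homologies.

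\textbf{Step 1: reduce to spaces.} Take $\DG=\DS$ and write $f\colon X\to BF$, an $\infty$-loop map. Then \cref{BCS} gives $\TS(Mf)\simeq M(\HS(f))$. Here $\HS(f)$ is the unstable symmetric homology of the $\mathbb{E}_1$-space $X$ over $BF$. By the space-level analogue of \cref{intro theorem A}, it is the free $\mathbb{E}_\infty$-space on the $\mathbb{E}_1$-space $X$, equipped with its induced map to $BF$. One may prove this analogue either by the same envelope argument or by noting that $M$ reflects it.

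\textbf{Step 2: identify the free $\mathbb{E}_\infty$-space.} The free $\mathbb{E}_\infty$-algebra in $\mathcal{S}$ on an $\mathbb{E}_1$-space $X$ is the left adjoint $\mathrm{Alg}_{\mathbb{E}_1}\to \mathrm{Alg}_{\mathbb{E}_\infty}$ applied to $X$. If $X$ is grouplike (which we may arrange, or handle by group completion), then passing through deloopings $B$ turns the adjunction into the one between pointed connected spaces and connective spectra. The free object therefore has delooping $QBX$, modulo a subtlety about the unit.

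\textbf{Step 3: use that $f$ is an $\infty$-loop map.} Since $X$ is already $\mathbb{E}_\infty$, the counit $QBX\to BX$ splits as $\infty$-loop spaces, so $QBX\simeq BX\times \widetilde{Q}BX$. Looping gives
\[
\HS(X)\simeq X\times \Omega\widetilde{Q}BX.
\]
The composite map to $BF$ is $f$ on the first factor and trivial on the second: the map $\Omega QBX\to BF$ factors through the counit and $f$, and vanishes on the fibre. The multiplicativity of $M$ then gives
\[
M(f\circ \mathrm{pr}_1)\simeq Mf\otimes \Omega\widetilde{Q}BX_+ .
\]
This last step is exactly the BCS argument, which produces the untwisting equivalence of Thom spectra via the $\infty$-loop structure of $f$.

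\textbf{The $\DB$ case.} Here the argument is identical with $\mathbb{E}_2$ in place of $\mathbb{E}_\infty$, using \cref{TB-rep-homology} and the envelope \cref{thm:envelope}. The free $\mathbb{E}_2$-space on a grouplike $\mathbb{E}_1$-space $X$ is $\Omega(\Omega\Sigma BX)=\Omega JBX$. A twofold loop map lets $JBX\to BX$ split as a map of loop spaces. This gives $JBX\simeq BX\times\widetilde{J}BX$, and the map to $BF$ again untwists.

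\textbf{Main obstacle.} I expect the main obstacle to be Step 2 and the splitting in Step 3. Three things need care there:
\begin{itemize}
\item handling non-grouplike $X$, where one needs the free algebra before group completion and must check that $M$ sees only the appropriate piece;
\item making the splitting compatible with the maps to $BF$, as opposed to merely an equivalence of underlying spaces;
\item in the $\mathbb{E}_2$ case, checking that a twofold loop map supplies enough structure to split over $BF$.
\end{itemize}
For the compatibility issue, I would first try the Thom diagonal: the equivalence $M(f\circ\mathrm{pr}_1)\simeq Mf\wedge(\cdot)_+$ is obtained as $Mf\otimes_{Mf}$ of the shearing map $X\times \Omega\widetilde{Q}BX\to X\times\Omega QBX$, which is defined using the multiplication of $X$.
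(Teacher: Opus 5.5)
Your proposal is correct and follows essentially the same route as the paper. You reduce via \cref{BCS} to the space-level free $\mathbb{E}_\infty$-algebra (resp.\ $\mathbb{E}_2$-algebra) $\Omega QBX$ (resp.\ $\Omega JBX$) over $\Pic_{\mathbb{S}}$, split it as $X\times \Omega\widetilde{Q}BX$ using \cref{lem:Qtilde} applied to $BX$, and observe that the structure map is $f\circ\Omega\varepsilon$, which is null on the fibre, so the Thom spectrum untwists to $Mf\otimes\Omega\widetilde{Q}BX_+$.

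Two of your listed obstacles are moot. First, $X$ is automatically grouplike, being a loop space. Second, compatibility with the maps to $BF$ already follows from your factorization through the counit, because $\varepsilon$ is an $H$-map split by $\eta$ and null on the fibre; so no Thom-diagonal argument is needed. One correction: drop the alternative that ``$M$ reflects'' freeness, since that does not work. Instead, apply \cref{representation homology thm} directly with $\cC=\mathscr{S}_{/BF}$.
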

 
These results allow us to identify $\TS(R)$ and $\TB(R)$ for many multiplicative Thom spectra of interest, for example $R = \mathrm{MU}$ in the case of $\TS$ as well as $R=H\mathbb{F}_p$ and $H\mathbb{Z}$ in the case of $\TB$; see \cref{cor: TS thom spectra examples,cor:TB-Thom-spectra-examples}. Computations of homotopy groups require more in-depth analysis, but we note that the stable homotopy types of $\widetilde{Q}BX$ and $\widetilde{J}BX$ can be understood using the Snaith and James splittings, respectively.  This, together with the Hilton--Milnor theorem for computing the loops of a wedge of spaces makes it possible to compute some homotopy groups explicitly.

\subsection{Acknowledgments} The authors would like to thank Tyler Lawson and Cary Malkiewich for helpful conversations. The second author was supported by NSF grant DMS-2135960. The third author was supported by NSF grant DMS-2404932. The fourth author was supported by NSF grant DMS-2506429 and a Simons Fellowship. The authors would also like to thank the Isaac Newton Institute for Mathematical Sciences, Cambridge, for support and hospitality during the programme ``Equivariant homotopy theory in context'' where some work on this paper was undertaken. 

\section{Symmetric homology}\label{section: symmetric homology}

In this section we recall the definition of topological symmetric homology from \cite{THG}, and give a comparison to representation homology of \cite{Berest-Ramadoss}. In particular, we give topological refinements of some of the results in \cite{Berest-Ramadoss}.  We also describe a character map relating these constructions to cyclic homology.

\subsection{Symmetric homology}\label{subsection: symmetric homology} The notion of crossed simplicial groups was introduced independently in \cite{FL91} and \cite{Krasauskas}.
In particular, the \emph{symmetric category} $\DS$, extensively studied in \cite{Fie, Aul10, Aul14}, extends the simplex category $\Delta$ via the addition of automorphisms on the objects $[n]$ through the symmetric groups $\Sigma_{n+1}$. We now give the full definition.

\begin{definition}
    
 The objects of $\DS$ are the finite ordered sets $[n] = \{0,\dots,n\}$, and morphisms $\DS([m],[n])$ are pairs $(\phi,\gamma)$ where $\phi\in \Delta([m],[n])$ is an order preserving map and $\gamma\in \Sigma_{m+1}^\op$. The morphisms in $\DS$ are generated by the coface and codegeneracy maps in $\Delta$, $\delta^i_n\colon [n-1]\to [n]$ and $\sigma^i_n\colon [n+1]\to [n]$ for $0\leq i \leq n$, as well as maps $t^i_n\colon [n]\to [n]$ for $0\leq i \leq n-1$ viewed as the transposition which swaps $i$ and $i+1$. These morphisms are subject to the usual cosimplicial identities, Moore relations (i.e., $t^i_nt^i_n=\mathrm{id}$, $t^i_n t^j_n=t^j_n t^i_n$ for $|i-j|>1$ and $t^i_n t^{i+1}_n t^i_n=t^{i+1}_n t^i_n t^{i+1}_n$), as well as the following \textit{mixed relations}: 
\begin{equation}\label{eq:mixes relations}
t_n^i\,\delta_n^j=
\begin{cases}
\delta_n^j\,t_{n-1}^i, & i<j-1,\\[2pt]
\delta_n^{i}, & i=j-1,\\[2pt]
\delta_n^{i+1}, & i=j,\\[2pt]
\delta_n^j\,t_{n-1}^{i-1}, & i>j,
\end{cases}
\quad \quad 
t_n^i\,\sigma_n^j=
\begin{cases}
\sigma_n^j\,t_{n+1}^i, & i<j-1,\\[2pt]
\sigma_n^{i}\,t_{n+1}^{i+1}\,t_{n+1}^{i}, & i=j-1,\\[2pt]
\sigma_n^{i+1}\,t_{n+1}^{i}\,t_{n+1}^{i+1}, & i=j,\\[2pt]
\sigma_n^j\,t_{n+1}^{i+1}, & i>j.
\end{cases}
\end{equation} 
See \cite[E.6.1.7]{Lod92} for more discussion.
 
 \end{definition}

 Note that composition in $\DS$ is defined so that $\Delta\subset\DS$ is a subcategory.  Moreover, the definition is such that a functor $\DS\to \mathscr{C}$ is precisely the data of a cosimplicial object $X^{\smbullet}$ in $\mathscr{C}$, together with symmetric group actions of $\Sigma_{n+1}$ on $X^n$ subject to the compatibility described by the mixed relations.
 
 \begin{remark} By \cite[Proposition 1.6]{FL91}, the category $\DS$ is entirely determined by functions $\gamma^*\colon \Hom_{\Delta}([m], [n])\to \Hom_{\Delta}([m], [n])$ for each $\gamma\in\Sigma_{n+1}$ and functions $\phi^*\colon \Sigma_{n+1}\to \Sigma_{m+1}$ for each map $\phi\colon [m]\to [n]$ in $\Delta$ such that the following diagram (of sets) commutes
\[
  \xymatrix{
    [m] \ar[d]_-{\phi^\ast \gamma} \ar[r]^-{\gamma^\ast \phi } & [n] \ar[d]^-\gamma\\
    [m] \ar[r]_-{\phi } &[n].
    }
\]
The maps $\gamma^*(\phi)$ and $\phi^*(\gamma)$ are defined as follows.  Note that any map $\phi\colon [m]\to [n]$ in $\Delta$ is determined by sequence of numbers $(\phi^{-1}(1),\dots,\phi^{-1}(n))$. The map $\gamma^*\phi$ is the unique order preserving function such that $(\gamma^*\phi)^{-1}(k) =\phi^{-1}(\gamma(k))$.  The map $\phi^*(\gamma)$ is the unique permutation which makes the square commute and is order preserving when restricted  to $(\gamma^*\phi)^{-1}(k)$ for all $k$.  See an example below in \cref{figure-symmetric-operation}.
From this perspective, the composition of two morphisms  $(\phi, \gamma)$ and $(\psi, \tau)$ in $\DS$ is defined as:
\[
(\phi, \gamma)\circ (\psi, \tau)= (\phi\circ \gamma^*\psi, \psi^*\gamma \circ \tau).
\]
\end{remark}
\begin{figure}[ht!]
 \centering
\begin{tikzpicture}[baseline= (a).base]
\node[scale=0.60] (a) at (1,1){
\begin{tikzcd}[row sep=small]
&&&&&& |[alias=TL5]|\textcolor{rgb,255:red,214;green,92;blue,214}{5} &&& {\gamma^\ast \phi} \\
	&&&&& |[alias=TL4]|\textcolor{rgb,255:red,214;green,92;blue,214}{4} \\
	&&&& |[alias=TL3]|\textcolor{rgb,255:red,92;green,214;blue,92}{3} &&&&&&&& |[alias=TR3]|\textcolor{rgb,255:red,214;green,92;blue,214}{3} \\
	&&& |[alias=TL2]|\textcolor{rgb,255:red,92;green,214;blue,92}{2} &&&&&&&& |[alias=TR2]|\textcolor{rgb,255:red,92;green,214;blue,92}{2} \\
	&& |[alias=TL1]|\textcolor{rgb,255:red,92;green,214;blue,92}{1} &&&&&&&& |[alias=TR1]|\textcolor{rgb,255:red,92;green,92;blue,214}{1} \\
	& |[alias=TL0]|\textcolor{rgb,255:red,228;green,158;blue,78}{0} &&&&&&&& |[alias=TR0]|\textcolor{rgb,255:red,228;green,158;blue,78}{0} \\
    &&&&&&&\\
&&&&&&& \\
&&&&&&&&\\
&&&&&&&&\\
&&&&&&&\\
&&&&&&&&\\
&&&&&&&&\\
&&&&&&&&\\
&&&&&&&&&\\
&&&&&&&&&\\
&&&&&& |[alias=BL5]|\textcolor{rgb,255:red,92;green,214;blue,92}{5} &&&&& & {\gamma} \\
	&&&&& |[alias=BL4]|\textcolor{rgb,255:red,92;green,214;blue,92}{4} \\
	{\phi^\ast \gamma} &&&& |[alias=BL3]|\textcolor{rgb,255:red,92;green,214;blue,92}{3} &&&&&&&& |[alias=BR3]|\textcolor{rgb,255:red,92;green,92;blue,214}{3} \\
	&&& |[alias=BL2]|\textcolor{rgb,255:red,228;green,158;blue,78}{2} &&&&&&&& |[alias=BR2]|\textcolor{rgb,255:red,92;green,214;blue,92}{2} \\
	&& |[alias=BL1]|\textcolor{rgb,255:red,214;green,92;blue,214}{1} &&&&&&&& |[alias=BR1]|\textcolor{rgb,255:red,228;green,158;blue,78}{1} \\
	& |[alias=BL0]|\textcolor{rgb,255:red,214;green,92;blue,214}{0} &&&&&&&& |[alias=BR0]|\textcolor{rgb,255:red,214;green,92;blue,214}{0} \\
	&&&&& {\phi}
	\arrow[draw={rgb,255:red,214;green,92;blue,214}, no head, from=TL5, to=TR3, line width=1.4pt]
	\arrow[draw={rgb,255:red,214;green,92;blue,214}, no head, from=TL5, to=BL1, line width=1.4pt]
	\arrow[draw={rgb,255:red,214;green,92;blue,214}, no head, from=TL4, to=TR3, line width=1.4pt]
	\arrow[draw={rgb,255:red,214;green,92;blue,214}, no head, from=TL4, to=BL0, line width=1.4pt]
	\arrow[draw={rgb,255:red,92;green,214;blue,92}, no head, from=TL3, to=BL5, line width=1.4pt]
	\arrow[draw={rgb,255:red,92;green,214;blue,92}, no head, from=TL2, to=TR2, line width=1.4pt]
	\arrow[draw={rgb,255:red,92;green,214;blue,92}, no head, from=TL2, to=BL4, line width=1.4pt]
	\arrow[draw={rgb,255:red,92;green,214;blue,92}, no head, from=TR2, to=TL3, line width=1.4pt]
	\arrow[draw={rgb,255:red,92;green,214;blue,92}, no head, from=TL1, to=TR2, line width=1.4pt]
	\arrow[draw={rgb,255:red,92;green,214;blue,92}, no head, from=TL1, to=BL3, line width=1.4pt]
	\arrow[draw={rgb,255:red,228;green,158;blue,78}, no head, from=TL0, to=TR0, line width=1.4pt]
	\arrow[draw={rgb,255:red,228;green,158;blue,78}, no head, from=TL0, to=BL2, line width=1.4pt]
	\arrow[draw={rgb,255:red,92;green,214;blue,92}, no head, from=BL5, to=BR2, line width=1.4pt]
	\arrow[draw={rgb,255:red,92;green,214;blue,92}, no head, from=BL4, to=BR2, line width=1.4pt]
	\arrow[draw={rgb,255:red,92;green,214;blue,92}, no head, from=BL3, to=BR2, line width=1.4pt]
	\arrow[draw={rgb,255:red,92;green,92;blue,214}, no head, from=TR1, to=BR3, line width=1.4pt]
	\arrow[draw={rgb,255:red,228;green,158;blue,78}, no head, from=BL2, to=BR1, line width=1.4pt]
	\arrow[draw={rgb,255:red,92;green,214;blue,92}, no head, from=BR2, to=TR2, line width=1.4pt]
	\arrow[draw={rgb,255:red,214;green,92;blue,214}, no head, from=BL1, to=BR0, line width=1.4pt]
	\arrow[draw={rgb,255:red,228;green,158;blue,78}, no head, from=BR1, to=TR0, line width=1.4pt]
	\arrow[draw={rgb,255:red,214;green,92;blue,214}, no head, from=BL0, to=BR0, line width=1.4pt]
	\arrow[draw={rgb,255:red,214;green,92;blue,214}, no head, from=BR0, to=TR3, line width=1.4pt]
\end{tikzcd}
};  
\end{tikzpicture}
\caption{Depiction of the operations $\phi^\ast$ and $\gamma^\ast$}
\label{figure-symmetric-operation}
\centering
\end{figure}

 Let $k$ be a field, and let $\Vect_k$ denote the category of $k$-vector spaces and linear transformations. 

\begin{definition}\label{symmetric bar construction}
The \textit{symmetric bar construction} of a $k$-algebra $R$ is denoted $\BDS(R)$ and is defined as a functor $\DS\to \Vect_k$
\[
\begin{tikzcd}
    R  \arrow[r, altstackar=3] & R^{\otimes 2} \ar[out=120, in=60, loop, looseness=6]{}{\Sigma_2} \arrow[altstackar=5]{r} & R^{\otimes 3} \ar[out=120, in=60, loop, looseness=6]{}{\Sigma_{3}} \arrow[altstackar=7]{r} & \cdots .
\end{tikzcd}
\]
The cofaces are defined as:
\begin{align*}
    \delta^i\colon R^{\otimes n+1}&\longrightarrow R^{\otimes n+2}\\*
    r_0\otimes \cdots \otimes r_n &\longmapsto r_0 \otimes \cdots 
    r_{i-1} \otimes 1_R \otimes r_i \otimes \cdots \otimes r_n.
\end{align*}
The codegeneracies are defined by
\begin{align*}
    \sigma^i\colon R^{\otimes n+1} & \longrightarrow R^{\otimes n}\\*
    r_0\otimes \cdots \otimes r_n & \longmapsto r_0\otimes \cdots \otimes r_{i-1} \otimes r_ir_{i+1} \otimes r_{i+2} \otimes \cdots \otimes r_n.
\end{align*}
The symmetric group $\Sigma_{n+1}$ acts on $R^{n+1}$ for all $g\in \Sigma_{n+1}$ via
\begin{align*}
    R^{\otimes n+1} & \longrightarrow R^{\otimes n+1}\\*
    r_0 \otimes \cdots \otimes r_n & \longmapsto r_{g^{-1}(0)}\otimes \cdots r_{g^{-1}(n)}.
\end{align*}

\end{definition}

There is an analogous bar construction defined using Connes' cyclic category $\DC\subset \DS$ (often also denoted $\Lambda$), for which automorphisms are given by the cyclic groups $C_{n+1}\subset \Sigma_{n+1}$. That is, the morphisms in  $\DC$ are generated by the coface and codegeneracy maps $\delta^i_n$ and $\sigma^i_n$ in $\Delta$, as well as maps $t_n\colon [n]\to [n]$ viewed as the generators of the groups $C_{n+1}$. The morphisms are subject to the usual cosimplicial identities, the cyclic relation $t_n^{n+1}=\id$, as well as the following mixed relations: 
\[
    t_n \,\delta_n^i=
        \begin{cases}
            \delta_n^n, & i=0,\\[2pt]
            \delta_n^{i-1}\,t_{n-1}, & i>0,
        \end{cases}
        \quad \quad 
        t_n \,\sigma_n^i=
        \begin{cases}
            \sigma_n^n\,t_{n+1}^2, & i=0,\\[2pt]
            \sigma_n^{i-1}\,t_{n+1}, & i>0.
    \end{cases}
\]

\begin{definition}
    The \textit{cocyclic bar construction} of a $k$-algebra $R$, denoted $\BDC(R)$, is defined as the functor $\DC\to \Vect_k$ obtained by precomposing $\BDS(R)\colon \DS\to \Vect_k$ with the inclusion functor $\DC\hookrightarrow \DS$:
    \[
\begin{tikzcd}
    R  \arrow[r, altstackar=3] & R^{\otimes 2} \ar[out=120, in=60, loop, looseness=6]{}{C_2} \arrow[altstackar=5]{r} & R^{\otimes 3} \ar[out=120, in=60, loop, looseness=6]{}{C_{3}} \arrow[altstackar=7]{r} & \cdots .
\end{tikzcd}
\]
Note that, $\BDC(R)$ and $\BDS(R)$ have the same underlying cosimplicial object. 
\end{definition}

The cyclic category $\DC$ is self-dual: there is an isomorphism of categories $\sdual\colon \DC^\op\overset{\cong}{\to} \DC$ that is the identity on objects, and on generating morphisms
\begin{align*}
   \sdual\left([n-1]\xrightarrow{\delta^i} [n]\right) &=\begin{cases}
       [n]\xrightarrow{\sigma^i} [n-1] & \text{if }0\leq i \leq n-1\\
       [n]\xrightarrow{\sigma^0 t_n^{-1}} [n-1] & \text{if }i=n,
   \end{cases}\\
   \sdual\left( [n+1]\xrightarrow{\sigma^i}[n]\right) &=[n]\xrightarrow{\delta^{i+1}}[n+1],\\
   \sdual\left( [n]\xrightarrow{t_n}[n]\right) &=
    [n]\xrightarrow{t_n^{-1}}[n], 
   \end{align*}
   where $t_n$ is the generator of $C_{n+1}$, see \cite{Dun89}. Using this duality, one recovers the more classical cyclic bar construction from the cocyclic bar construction.

   \begin{definition}
       The \textit{cyclic bar construction} of a $k$-algebra $R$, denoted $\BDCu(R)$, is  the functor $\BDC(R)\circ \sdual\colon \DC^\op\to \Vect_k$.  
       Explicitly, the faces are given by:
\begin{align*}
    R^{\otimes n+1} & \stackrel{\delta_i}\longrightarrow R^{\otimes n}\\
    r_0\otimes \cdots \otimes r_n &\longmapsto \begin{cases}
    r_0\otimes \cdots \otimes r_ir_{i+1} \otimes \cdots \otimes r_n & \text{if }0\leq i \leq n-1\\
    r_nr_0\otimes r_1\otimes \cdots \otimes r_{n-1} & \text{if }i=n
    \end{cases}
\end{align*}
while the degeneracies are given by:
\begin{align*}
    R^{\otimes n+1} & \stackrel{\sigma_i}\longrightarrow R^{\otimes n+2}\\
    r_0\otimes \cdots \otimes r_n &\longmapsto r_0 \otimes \dots \otimes r_{i} \otimes 1_R \otimes r_{i+1} \otimes \dots \otimes  r_n.
\end{align*}
Moreover, each cyclic group $C_{n+1}=\langle t_n \rangle$ acts on $R^{\otimes n+1}$ as follows:
\begin{align*}
   R^{\otimes n+1} & \stackrel{t_n}\longrightarrow R^{\otimes n+1}\\
    r_0\otimes \cdots \otimes r_n & \longmapsto r_n\otimes r_0 \otimes \cdots \otimes r_{n-1}.
\end{align*}
       We denote the underlying simplicial object of $\BDCu(R)$ by $\Bcyc_\smbullet(R)\colon \Delta^\op\to \Vect_k$.
   \end{definition}

The symmetric and (co)cyclic bar constructions give rise to homology theories for $k$-algebras.

\begin{definition}
Let $R$ be a $k$-algebra.  The \textit{symmetric homology groups $\HS_*(R)$ of $R$} are defined as the functor homology groups
\[
    \HS_n(R)=\Tor_n^\DS(\underline{k}, \BDS(R)),
\]
where $\underline{k}\colon \DS^\op\to \Vect_k$ denotes the constant diagram at $k$.
The \textit{cyclic homology groups $\HC_*(R)$ of $R$} are defined equivalently as:
\[
\HC_n(R)=\Tor_n^\DC(\underline{k}, \BDC(R))\cong \Tor_n^{\DC^{\op}}(\underline{k}, \BDCu(R)).
\]
The \textit{Hochschild homology groups $\HH_*(R)$ of $R$} are defined as:
\[
\HH_n(R)=\Tor_n^{\Delta^{\op}}(\underline{k}, \Bcyc_\smbullet(R))  \,.
\]
\end{definition}

\begin{remark}
    Unlike $\DC$, the symmetric category is not self-dual: there is no equivalence of categories between $\DS$ and $\DS^\op$ \cite[1.4]{Dun89}.
    Moreover, by \cite[Theorem 6.16]{FL91}, given a functor $X_\smbullet\colon \DS^\op\to \Vect_k$, there is an isomorphism $\Tor_n^\DS(X_\smbullet, \underline{k})\cong H_n(|X_\smbullet|; k)$,  
    where $|X_\smbullet|$ is the geometric realization of the underlying simplicial set of $X_\smbullet$. Therefore a contravariant version of the symmetric bar construction would not lead to a new invariant.
\end{remark}

\begin{remark}\label{derived HS}
    We can define $\HS$ and $\HC$ more generally in the category of simplicial $R$-modules, for example, in which case we define this in the same way and will always mean the derived version of the construction. In particular, we can make sense of  $\HS$ and $\HC$ of a connective differential graded algebra. 
\end{remark}

\begin{remark}\label{remark: HS as homotopy groups}
    Using the previous remark, if $R$ is a graded $k$-algebra, viewed as a simplicial module, then $\HS_n(R)$ is equivalent to the $n$-th homotopy group of the homotopy colimit of the symmetric bar construction $B_\DS^\smbullet(R)\colon \DS\to \mathrm{sMod}_k$, see \cite[4.24, 4.26]{THG}.
\end{remark}

\subsection{Topological symmetric homology}

In this section, we discuss a topological refinement of symmetric homology which produces a spectrum from any $\mathbb{E}_1$-ring spectrum. Generalizing work of Ault, we show that this spectrum has a canonical $\mathbb{E}_{\infty}$-structure.  We begin with some preliminary observations about the symmetric category which will prove useful in the construction of the $\mathbb{E}_{\infty}$-structure.

We first argue that the symmetric category can be viewed as the free symmetric monoidal category on $T$, the terminal category with unique object $*$ and unique morphism. The terminal category $T$ has a unique monoidal structure $*+*=*$, and given any monoidal category $(\cC,\otimes,1_{\cC})$ there is an equivalence of categories between lax monoidal functors $T\to \cC$ and monoid objects in $\cC$.
Indeed, a functor $F\colon T\to \cC$ is the data of an object $A\coloneqq F(*)\in \cC$, and a lax monoidal structure on $F$ is the data of a morphism 
\[
A\otimes A=F(*)\otimes F(*)\longrightarrow F(*+*)=F(*)=A
\]
in $\cC$ and a unit $1_\cC\to F(*)=A$, which is associative and unital, as $F$ is lax monoidal.

\begin{definition}\label{def: augmented simplex cat}
  Let $\Delta_+$ be the augmented simplex category: it is the simplex category $\Delta$ with an additional initial object. Denote by $[-1]$ the initial object with unique maps $[-1]\to [n]$ for all $n$. We sometimes shift the notation and write $\langle n \rangle = [n-1]$ for all $n\geq 0$. 
\end{definition}

The augmented simplex category $\Delta_+$ is a (non-symmetric) monoidal category with monoidal product:
\[
\langle n \rangle \sqcup \langle m \rangle = \langle n+m \rangle 
\]
for $n,m\geq 0$.
Given  morphisms $f\colon \langle n \rangle \to \langle n' \rangle$ and $g\colon \langle m \rangle \to \langle m' \rangle $ in $\Delta_+$, define $f\sqcup g$ to be the  morphism $\langle n+m\rangle \to \langle n'+m' \rangle$ determined by $f$ on the first $n$ elements and by $g$ on the last $m$ elements. This determines a bifunctor $-\sqcup -\colon \Delta_+\times \Delta_+ \to \Delta_+$ that is a monoidal product, with monoidal unit given by $\langle 0 \rangle$.
Notice that while $\langle n+m \rangle$ and $\langle m+n \rangle$ are the same set, the reordering bijection which swaps the first $n$ elements with the last $m$ is not order preserving. That is, the naive choice of symmetric structure is not a morphism in $\Delta_+$. Therefore $\Delta_+$ is monoidal, but not symmetric monoidal. 

\begin{remark}\label{rem: universal prop of Delta augmented}
The monoidal structure on $\Delta_+$ has the following universal property. 
Given any monoidal category $\cC$, there is an equivalence of categories between strong monoidal functors $\Delta_+\to \cC$ and monoid objects in $\cC$, see \cite[VII \S5 Proposition 1]{MacLane}. There is an adjunction 
\[
    \begin{tikzcd}[column sep=large]
       \mathrm{MonCat}^\mathrm{lax}\ar[bend left]{r}{}\ar[phantom, "\perp" description, xshift=0.25ex]{r} & \mathrm{MonCat}^\mathrm{strong}\ar[bend left]{l}{U}
    \end{tikzcd}
\]
where $\mathrm{MonCat}^\mathrm{lax}$ is the category of small monoidal categories with lax monoidal functors, while $\mathrm{MonCat}^\mathrm{strong}$ is the category of small monoidal categories with strong monoidal functors. The functor $U$ is the forgetful functor, which has a left adjoint that sends the terminal category $T$ with its unique monoidal structure to $\Delta_+$. 
\end{remark}

The symmetric category $\DS$ plays the role of $\Delta$ in the symmetric version of the above story.
Let $\DS_+$ be the augmented symmetric category, obtained from $\Delta\Sigma$ via the addition of an initial object as in \cref{def: augmented simplex cat}.
Then one can extend the monoidal structure on $\Delta_+$ to $\DS_+$, and in fact, $\DS_+$ is a \textit{symmetric} monoidal category \cite[Lemma 2.25]{THG}.

In \cite[Proposition 3.10]{THG}, it was proved that (the nerve of) $\DS_+$ is equivalent, as a symmetric monoidal $\infty$-category, to the symmetric monoidal envelope $\Env(\Assoc)$ of the associative operad, which is just the subcategory of $\Assoc^\otimes$ spanned by the active morphisms \cite[2.2.4.3]{HA}. In \cref{section on representation}, we recall the more general $\cO$-monoidal envelope of $\Assoc$ where $\cO$ is an $\infty$-operad, see \cref{Monoidal envelopes without fibration}. For instance, the (nerve of) $\Delta_+$ is the $\mathbb{E}_1$-monoidal envelope $\Env_{\mathbb{E}_1}(\Assoc)$ of the associative operad, in the sense of \cite[2.2.4.1]{HA}; see also \cite[Construction 3.4]{Keenan25}. 

As in \cref{rem: universal prop of Delta augmented}, the monoidal structure on $\DS_+$ has a universal property: given any symmetric monoidal category $\cC$, the category of strong symmetric monoidal functors $\DS_+\to \cC$ and monoidal natural transformations is equivalent to the category of (non-commutative) monoid objects in $\cC$. 
Moreover, there is an adjunction 
\[
    \begin{tikzcd}[column sep=large]
    \mathrm{MonCat}^\mathrm{strong}\ar[bend left]{r}{}\ar[phantom, "\perp" description, xshift=0.75ex]{r} & \mathrm{SymMonCat}^\mathrm{strong},\ar[bend left]{l}{U}
    \end{tikzcd}
\]
where $\mathrm{SymMonCat}^\mathrm{strong}$ is the category of symmetric monoidal small categories with strong monoidal functors. Here $U$ is the forgetful functor, which has a left adjoint that sends $\Delta_+$ to $\DS_+$. 

The above observations also show that 
given any symmetric monoidal $\infty$-category $\cC$, the $\infty$-category of strong symmetric monoidal functors from the nerve of $\DS_+$ to $\cC$ is equivalent to the $\infty$-category of $\mathbb{E}_1$-algebras in $\cC$. 
In particular, any $\mathbb{E}_1$-ring spectrum $R$, viewed as a map of $\infty$-operads $R^\otimes\colon \Assoc^\otimes\to \Sp^\otimes$, can be equivalently regarded as a strong symmetric monoidal functor $F_R^\otimes\colon \Env(\Assoc)^\otimes\to \Sp^\otimes$ by the universal property of the symmetric monoidal envelope \cite[2.2.4.9]{HA}. Note that this agrees with the composite 
\[
\begin{tikzcd}
   \Env(\Assoc)^\otimes \ar{r}{\Env(R)}& [2em]\Env(\Sp)^\otimes \ar{r}{\otimes} &\Sp^\otimes \,. 
\end{tikzcd}
 \]

We now turn to the topological analogues of the homology theories from \cref{subsection: symmetric homology}. Recall that the positive topological cyclic homology $\TC^+(R)$ of an $\mathbb{E}_1$-ring spectrum $R$ is the homotopy colimit of the functor 
    \[
    \begin{tikzcd}
        \DC\ar[hook]{r} &  \DS \ar[hook]{r} & \DS_+\simeq \Env(\Assoc) \ar{r}{F_R}  & \Sp\,,
    \end{tikzcd}
    \]
    while topological Hochschild homology $\THH(R)$ is defined in \cite{NS18} to be the homotopy colimit of the functor:
     \[
    \begin{tikzcd}
       \Delta^\op \ar[hook]{r} &  \DC^\op\simeq\DC\ar[hook]{r} &  \DS \ar[hook]{r} & \DS_+\simeq \Env(\Assoc) \ar{r}{F_R} & \Sp.
    \end{tikzcd}
    \]
    Note that $\THH(R)$ comes equipped with an $S^1$-action and $\TC^+(R)$ is the $S^1$-homotopy orbits $\THH(R)_{hS^1}$, see \cite[B.5]{NS18} and \cite[4.12, 4.21]{THG}.

\begin{definition}[{\cite[Example 4.33]{THG}}]\label{TS-def}
    The \textit{topological symmetric homology $\TS(R)$} of an $\mathbb{E}_1$-ring spectrum $R$ is the homotopy colimit of the functor 
    \[
    \begin{tikzcd}
        \DS \ar[hook]{r} & \DS_+\simeq \Env(\Assoc) \ar{r}{F_R} & \Sp.
    \end{tikzcd}
    \]
    For $n\in \mathbb{Z}$, we write $\TS_n(R)$ for $\pi_n(\TS(R))$. The above composite of functors gives a homotopy coherent analog in spectra of the symmetric bar construction of \cref{symmetric bar construction}.
    The construction is natural in $R$, and defines a functor $\TS\colon \Alg_{\mathbb{E}_1}(\Sp)\to \Sp$ where $\Alg_{\mathbb{E}_1}(\Sp)$ denotes the $\infty$-category of $\mathbb{E}_1$-ring spectra.
\end{definition}

\begin{lemma}[{\cite[Lemma 6.7]{THG}}]\label{lemma:THG}
Given an $\mathbb{E}_1$-ring spectrum $R$,
    the spectrum $\TS(R)$ can equivalently be obtained as the homotopy colimit of $F_R\colon \Env(\Assoc)\to \Sp$, the adjunct of the map of $\infty$-operads $R\colon \Assoc\to \Sp$ defining $R$.
\end{lemma}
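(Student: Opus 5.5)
The plan is to compare the colimit over $\DS_+$ with the colimit over its full subcategory $\DS$. Since $\DS_+$ is obtained from $\DS$ by adjoining an initial object $[-1]=\langle 0\rangle$, the inclusion $\iota\colon \DS\hookrightarrow \DS_+$ should be cofinal in the $\infty$-categorical sense. If so, then for every functor $F\colon \DS_+\to \Sp$ the canonical map
\[
\operatorname{colim}_{\DS} F\circ \iota \longrightarrow \operatorname{colim}_{\DS_+} F
\]
is an equivalence. Taking $F=F_R$ recovers the lemma.

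By Quillen's Theorem A (in the form of Lurie's cofinality criterion, HTT 4.1.3.1), cofinality of $\iota$ amounts to the following. For each object $x\in\DS_+$, the comma category $\DS_{x/}=\DS\times_{\DS_+}(\DS_+)_{x/}$ must have weakly contractible nerve. There are two cases.

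\emph{Case $x=[n]$ with $n\ge 0$.} The category $\DS_{x/}$ has the initial object $\id_{[n]}$, since $\DS$ is a full subcategory. An initial object makes the nerve contractible.

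\emph{Case $x=[-1]$.} Here $\DS_{x/}$ is isomorphic to $\DS$ itself, because $[-1]$ has a unique map to every object. So I need $N(\DS)$ to be weakly contractible. The cleanest argument is that $[0]$ is terminal in $\DS$. For any $[m]$, a morphism $[m]\to[0]$ is a pair $(\phi,\gamma)$ with $\phi$ the unique map $[m]\to[0]$ and $\gamma\in\Sigma_{m+1}^{\op}$. By the unique factorization of \cite[Proposition 1.6]{FL91}, $(\phi,\gamma)$ and $(\phi,\id)$ are different elements unless we check how composition works. Indeed, $\DS([m],[0])$ is not a singleton, so $[0]$ is not terminal.

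I therefore switch strategy. The geometric realization of a crossed simplicial group category $\Delta\mathbf{G}$ is known: by Fiedorowicz--Loday, $|N\Delta\mathbf{G}|\simeq B|\mathbf{G}_\smbullet|$. For $\DS$, the simplicial set $\Sigma_{\smbullet+1}$ has contractible realization (FL91, Example 6), so $|N\DS|\simeq *$. Alternatively, I can cite \cite[Proposition 3.10]{THG} together with the known contractibility $N(\DS)\simeq *$ (FL91, Theorem 6.16 applied to constant coefficients, which gives $\Tor^{\DS}_*(\underline{k},\underline{k})=k$ in degree $0$). I should double-check that this homological statement upgrades to weak contractibility. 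The nerve is simply connected because $\pi_1$ is the realization group $\pi_0|\Sigma_\smbullet|$, which is trivial. Combined with integral acyclicity, Whitehead's theorem then gives contractibility.

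The main obstacle is exactly this contractibility of $N(\DS)$; the rest is formal. With it in hand, cofinality holds, and the two homotopy colimits agree naturally in $R$.
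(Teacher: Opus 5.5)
Your proposal is correct and follows the paper's proof essentially verbatim. You prove that $\DS\hookrightarrow\DS_+$ is cofinal via Quillen's Theorem A (HTT 4.1.3.1): the case $[n]$ with $n\ge 0$ is handled by the initial object $\id_{[n]}$, and the case $[-1]$ reduces to weak contractibility of $N(\DS)$, which both you and the paper obtain from Fiedorowicz--Loday ($B\DS\simeq B|\Sigma_{\smbullet+1}|\simeq *$). You can drop the abandoned terminal-object attempt and the homological alternative, since the $\pi_1$ step of the latter already relies on the same Fiedorowicz--Loday input.
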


\begin{proof}
   By \cite[Theorem 3.17]{THG}, there is an equivalence  $\Env(\Assoc)\simeq \DS_+$, and we show that the inclusion $\DS\to \DS_+$ is homotopy cofinal. By Quillen's Theorem A for $\infty$-categories \cite[Theorem 4.1.3.1]{HTT}, it suffices to show that for every $n\geq -1$, the $\infty$-category $\DS\times_{\DS_+}(\DS_+)_{[n]/}$ is weakly contractible \cite[4.1.1.8]{HTT}.
   If $n\geq 0$, then this category is equivalent to $\DS_{[n]/}$ since morphisms in $\DS_+$ are the same as the ones for $\DS$ between the non-initial objects. The category $\DS_{[n]/}$ is contractible as the identity on $[n]$ is initial. 
   
   If $n=-1$, then the category  $\DS\times_{\DS_+}(\DS_+)_{[-1]/}$ is equivalent to $\DS$. Indeed, an object $([m], [-1]\to [m])$ for $m\geq 0$ in $\DS\times_{\DS_+}(\DS_+)_{[-1]/}$ is precisely determined by picking an object $[m]$ in $\DS$, and any morphism $[m]\to [\ell]$ in $\DS$ is compatible with the unique maps $[-1]\to [m]$ and $[-1]\to [\ell]$. Since $B\DS\simeq *$ by~\cite[Example 6 in Section 1.5]{FL91} and the proof of \cite[Proposition 5.8]{FL91}, the proof is complete.
\end{proof}

In general, topological Hochschild homology and its analogues take as input a ring spectrum, and output a spectrum. We now show that the output of  topological symmetric homology is always an $\mathbb{E}_\infty$-ring spectrum.

\begin{theorem}
\label{Einfty-structure}
Given an $\mathbb{E}_1$-ring spectrum $R$, the topological symmetric homology $\TS(R)$ has a canonical  $\mathbb{E}_\infty$-ring spectrum structure. 
In fact, the functor $\TS\colon \Alg_{\mathbb{E}_1}(\Sp)\to \Sp$ lifts to a functor $\Alg_{\mathbb{E}_1}(\Sp)\to \Alg_{\mathbb{E}_\infty}(\Sp)$ where $\Alg_{\mathbb{E}_\infty}(\Sp)$ is the $\infty$-category of $\mathbb{E}_\infty$-ring spectra. 
\end{theorem}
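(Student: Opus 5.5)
By \cref{lemma:THG}, $\TS(R)\simeq \operatorname{colim}_{\Env(\Assoc)} F_R$. Here $F_R\colon \Env(\Assoc)^\otimes\to \Sp^\otimes$ is a strong symmetric monoidal functor, the adjunct of $R$ under the universal property of the symmetric monoidal envelope \cite[2.2.4.9]{HA}. The plan is to exploit the general principle that colimits of symmetric monoidal functors out of a symmetric monoidal $\infty$-category are $\mathbb{E}_\infty$-algebras. Concretely, the left Kan extension along $p\colon \Env(\Assoc)\to *$ of a lax symmetric monoidal functor is again lax symmetric monoidal. This holds because $\Sp$ is presentably symmetric monoidal, so $\otimes$ preserves colimits in each variable, and the terminal category $*$ is symmetric monoidal in a unique way.

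First I would record that $F_R$ is in particular lax symmetric monoidal, i.e.\ an object of $\Fun^{\lax}(\Env(\Assoc),\Sp)\simeq \Alg_{\mathbb{E}_\infty}(\Fun(\Env(\Assoc),\Sp)^{\mathrm{Day}})$, using Day convolution (Glasman; \cite[2.2.6]{HA}). Second, I would note that the restriction $p^*\colon \Sp\to \Fun(\Env(\Assoc),\Sp)$ along the symmetric monoidal functor $p$ is symmetric monoidal for the Day convolution structures. Its left adjoint $p_!=\operatorname{colim}$ is therefore symmetric monoidal: the Day convolution of $F,G$ is the left Kan extension of $F\boxtimes G$ along $\otimes$, and $p_!(F\circledast G)\simeq \operatorname{colim}_{\Env(\Assoc)\times \Env(\Assoc)}F(a)\otimes G(b)\simeq p_!F\otimes p_!G$, using that $\otimes$ on $\Sp$ commutes with colimits separately in each variable. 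Third, a symmetric monoidal functor induces a functor on $\mathbb{E}_\infty$-algebras. Hence $p_!F_R=\TS(R)$ acquires an $\mathbb{E}_\infty$-structure, and its underlying spectrum is the colimit by construction.

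For functoriality, the assignment $R\mapsto F_R$ is the equivalence $\Alg_{\mathbb{E}_1}(\Sp)\simeq \Fun^{\otimes}(\Env(\Assoc),\Sp)$. Composing it with the inclusion into lax functors and then with $\Alg_{\mathbb{E}_\infty}(p_!)$ gives a functor $\Alg_{\mathbb{E}_1}(\Sp)\to \Alg_{\mathbb{E}_\infty}(\Sp)$. Its composite with the forgetful functor is $\TS$, by \cref{lemma:THG} and \cref{TS-def}.

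The main obstacle is the $\infty$-categorical bookkeeping in the second step: making precise that $p_!$ is symmetric monoidal for Day convolution. The cleanest route is to cite \cite[Corollary 3.8 / Prop.\ 3.6]{} style results of Glasman, or \cite[4.8.1, 2.2.6]{HA}, on operadic left Kan extensions. Alternatively, one can invoke \cite[Corollary 3.1.3.5]{HA} to see that the operadic left Kan extension of a lax monoidal functor along $\Env(\Assoc)^\otimes\to \mathrm{Comm}^\otimes$ exists and computes the plain colimit. That requires the colimits to be compatible with $\otimes$, which holds in $\Sp$. Either way, this is the only nonformal input.
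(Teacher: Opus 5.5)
Your proposal is correct and is essentially the paper's argument. The paper likewise uses \cref{lemma:THG} to rewrite $\TS(R)$ as the colimit of the strong (hence lax) symmetric monoidal functor $F_R$ on $\Env(\Assoc)$, and then cites \cite[Example 3.18]{Keenan-Peroux} for the Day convolution fact you sketch, namely that colimits of lax symmetric monoidal functors into $\Sp$ are $\mathbb{E}_\infty$-rings. One small correction: your inference that $p_!$ is symmetric monoidal because $p^*$ is does not hold, since a left adjoint of a symmetric monoidal functor is in general only oplax. The logic runs the other way: your direct computation $p_!(F\circledast G)\simeq p_!F\otimes p_!G$ is what shows $p_!$ is strong, and $p^*$ then inherits only a lax structure as its right adjoint, which does not affect your conclusion.
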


\begin{proof}
    Using \cref{lemma:THG}, this is a formal argument from Day convolution.  If $\cC$ is a small symmetric monoidal $\infty$-category then the (homotopy) colimit of any lax symmetric monoidal functor $\cC\to \Sp$ is an $\mathbb{E}_\infty$-ring spectrum \cite[Example 3.18]{Keenan-Peroux}. 
\end{proof}

\begin{remark}
We expect that this extends the graded commutative algebra structure on symmetric homology $\HS$ produced in~\cite{Aul14} called the Pontryagin product therein.
\end{remark}

In the next section we prove that this $\mathbb{E}_{\infty}$-ring structure is free; that is, we show $\TS\colon \Alg_{\mathbb{E}_1}(\Sp)\to \Alg_{\mathbb{E}_{\infty}}(\Sp)$ is left adjoint to the forgetful functor.

\section{Representation homology}\label{section on representation}
For $k$ a characteristic zero field and a $k$-algebra $R$, the (one-dimensional) representation homology of $R$, $\HR_*(R;k)$, is defined in \cite{BKR13} as a kind of derived abelianization, and has applications to the study of character varieties. In \cite{Berest-Ramadoss}, $\HR_*(R;k)$ is identified with symmetric homology. They also construct a map relating cyclic homology and representation homology, $\HC_*(R/k)\to \HR_*(R;k)$, called the derived 1-dimensional character map. In this section, we develop tools to study generalizations of representation homology in new settings, and prove topological refinements of some results in \cite{Berest-Ramadoss}.

\subsection{The monoidal envelope and representation homology}

In this subsection we recall the construction of the $\cO$-monoidal envelope of $\mathrm{Assoc}$ for an $\infty$-operad $\cO$. For $\cC$ an $\cO$-monoidal $\infty$-category and $R$ an $\mathbb{E}_1$-algebra in $\cC$, we then define the $1$-dimensional $\mathscr{O}$-representation homology of $R$. 

\begin{construction}[{\cite[2.2.4.1]{HA}}]\label{Monoidal envelopes without fibration}
Let $\cO^\otimes$ be an $\infty$-operad. 
Let $\mathrm{Act}(\cO^\otimes)$ denote the full subcategory of $\Fun(\Delta^1, \cO^\otimes)$ spanned by the active morphisms.
Let $f^\otimes\colon \Assoc^\otimes \to \cO^\otimes$ be a fibration of $\infty$-operads \cite[2.1.2.10]{HA}.
The $\cO$-monoidal envelope of $\Assoc$ is the $\cO$-monoidal $\infty$-category $\Env_\cO(\Assoc)^\otimes=\Assoc^\otimes\times_{\Fun(\{0\}, \cO^\otimes)}\mathrm{Act}(\cO^\otimes)$. When $\cO=\mathbb{E}_{\infty}$, we simply write $\Env(\Assoc)^\otimes$. 

If $f^\otimes\colon \Assoc^\otimes\to  \cO^\otimes$ is a map of $\infty$-operads that is not a fibration, we may replace it by a fibration 
using the model structure on $\infty$-preoperads from \cite[2.1.4.6]{HA}. 
Recall that an $\infty$-preoperad is a marked simplicial set $(X,M)$ with a map of simplicial sets $X\to \Fin_\ast$ such that edges $e\in M$ are sent to inert morphisms. 
Let $\Assoc^{\otimes, \natural}$  
and $\cO^{\otimes,\natural}$ be the $\infty$-preoperads obtained from the corresponding $\infty$-operads by marking all the inert morphisms. 
Then we can replace the induced map $f^{\natural}$ by a fibration $\widetilde{f}^\natural\colon \widetilde{\Assoc}^{\otimes, \natural}\to \cO^{\otimes, \natural}$ in the category of $\infty$-preoperads. 
By \cite[2.1.4.6]{HA}, since $\cO^\otimes$ is fibrant as it is an $\infty$-operad, then  $\widetilde{\Assoc}^{\otimes, \natural}$ is also fibrant, and in turn corresponds to an $\infty$-operad $\widetilde{\Assoc}^{\otimes}$. 
Moreover, the fibration of $\infty$-preoperads $\widetilde{f}^\natural$ corresponds to a fibration of $\infty$-operads $$\widetilde{f}^\otimes\colon \widetilde{\Assoc}^{\otimes}\to \cO^\otimes.$$ 
Additionally, we note that since $\Assoc^{\otimes,\natural}\xrightarrow{\simeq} \widetilde{\Assoc}^{\otimes,\natural}$ is a weak equivalence of $\infty$-preoperads between fibrant objects, there is a homotopy inverse, by \cite[B.2.4]{HA}.  
Thus we obtain an equivalence of $\infty$-categories $i^\otimes\colon \Assoc{^\otimes}\xrightarrow{\simeq} \widetilde{\Assoc}^{\otimes}$
which is a map of $\infty$-operads that satisfies $f=\widetilde{f}\circ i$. We define $\Env_\cO(\Assoc)^\otimes$ for any map of $\infty$-operads $f^\otimes\colon \Assoc^\otimes\to \cO^\otimes$ via the pullback diagram
\[
\begin{tikzcd}
    \Env_{\cO}(\Assoc)^\otimes \pull \ar{r} \ar{d} & \mathrm{Act}(\cO^\otimes) \ar{d}{\mathrm{ev}_0} \\
    \widetilde{\Assoc}^\otimes \ar{r}{\widetilde{f}^\otimes} & \cO^\otimes.
\end{tikzcd}
\]
By \cite[2.2.4.4]{HA}, evaluation at $1$ induces a map $\mathrm{ev}_1\colon\mathrm{Act}(\cO^\otimes)\to \cO^\otimes$ and there is a coCartesian fibration of $\infty$-operads $p^\otimes\colon\Env_\cO(\Assoc)^\otimes\to \cO^\otimes$ given by the composite
\[\Env_\cO(\Assoc)^\otimes\longrightarrow \mathrm{Act}(\cO^\otimes)\stackrel{\mathrm{ev}_1}\longrightarrow \cO^\otimes.\] 
The structure map $\cO^\otimes\to \Fin_\ast$ realizes $\Env_{\cO}(\Assoc)^\otimes$ as an $\infty$-operad and thus as an $\cO$-monoidal $\infty$-category. 
The fiber at $\mathbf{1}\in \Fin_\ast$ associated to this map is the underlying $\infty$-category of this $\infty$-operad and is denoted
$$\Env_{\cO}(\Assoc)\coloneqq \Env_{\cO}({\Assoc})^\otimes_{\mathbf{1}}.$$
\end{construction}

\begin{notation}\label{iota functor} 
Denote by $\iota^\otimes\colon \Assoc^\otimes \hookrightarrow \Env_{\cO}(\Assoc)^\otimes$ the map of $\infty$-operads obtained as the composite of inclusions $i^\otimes\colon \Assoc^\otimes \hookrightarrow \widetilde{\Assoc}^\otimes$ and the inclusion $\widetilde{\Assoc}^\otimes\hookrightarrow \Env_\cO(\Assoc)^\otimes$.
\end{notation}

\begin{remark}\label{universal property of envelope}
    By \cite[2.2.4.9]{HA}, if $\cC$ is an $\cO$-monoidal $\infty$-category \cite[2.1.2.13]{HA}, the universal property of $\Env_{\cO}(\Assoc)$ determines an equivalence of $\infty$-categories:
\[
\iota^*\colon \Fun_{\cO}^\otimes(\Env_{\cO}(\Assoc), \cC) \stackrel{\simeq}\longrightarrow \Alg_{\mathbb{E}_1}(\cC) 
\]
sending a strong $\cO$-monoidal functor $F^\otimes\colon \Env_{\cO}(\Assoc)^\otimes\to  \cC^\otimes$ to the map of $\infty$-operads $F^\otimes\circ \iota^\otimes \colon \Assoc^\otimes\to \cC^\otimes$. 
Here, when writing $\Alg_{\mathbb{E}_1}(\cC)$, we view $\cC$ as a monoidal $\infty$-category by the pullback of $\cC^\otimes\to \cO^\otimes$ along the map of $\infty$-operads $f^\otimes\colon \Assoc^\otimes\to \cO^\otimes$.
Conversely, any map of $\infty$-operads $g^\otimes\colon\Assoc^\otimes \to \cC^\otimes$ defines a strong $\cO$-monoidal functor as the composite
\begin{equation}\label{equation 1}
\begin{tikzcd}
    \Env_\cO(\Assoc)^\otimes \ar{r}{\Env_\cO(g)} & [2em] \Env_{\cO}(\cC)^\otimes \ar{r}{\otimes} & \cC^\otimes. 
\end{tikzcd}
\end{equation}
\end{remark}

\begin{notation}\label{strong monoidal functor associated to algebras}
Given any $\mathbb{E}_1$-algebra $R$ in $\cC$, viewed as a map of $\infty$-operads $R^\otimes\colon \Assoc^\otimes\to \cC^\otimes$, we denote by $F_R^{\cO}\colon \Env_\cO(\Assoc)\to \cC$ the underlying functor of \eqref{equation 1}.
When $\cO^\otimes$ is the commutative $\infty$-operad, we denote $F_R^\cO$ simply by $F_R$, recovering our previous notation. Note that $\iota^*{F_R^{\cO}}\simeq R$.
\end{notation}

\begin{construction}\label{free O-algebra and colimits construction}
Let $\kappa$ be an infinite regular cardinal.
Suppose $\cO^\otimes$ is a $\kappa$-small $\infty$-operad. 
Let $\cC^\otimes$ be an $\cO$-monoidal $\infty$-category which is compatible with $\kappa$-small colimits \cite[3.1.1.19]{HA}.
Let $f^\otimes\colon \Assoc^\otimes\to \cO^\otimes$ be a map of $\infty$-operads.

\begin{itemize}
    \item As $\Alg_{\cO}(\cC)$ and $\Alg_{\mathbb{E}_1}(\cC)$ are the $\infty$-categories of maps of $\infty$-operads $\cO^\otimes\to \cC^\otimes$ and  $\Assoc^\otimes\to \cC^\otimes$ over $\cO^{\otimes}$ and $\Assoc^{\otimes}$, respectively, we get a forgetful functor $f^*\colon \Alg_{\cO}(\cC)\to \Alg_{\mathbb{E}_1}(\cC)$, and we denote its left adjoint by 
   \[  
   f_!\colon \Alg_{\mathbb{E}_1}(\cC)\to \Alg_{\cO}(\cC)
    \]
    which always exists by \cite[3.1.3.5]{HA}.
If $R$ is an $\mathbb{E}_1$-algebra in $\cC$, we refer to $f_!R$ as the \textit{free $\cO$-algebra in $\cC$ generated by $R$.} 

\item The coCartesian fibration $p^\otimes \colon \Env_{\cO}(\Assoc)^\otimes\to \cO^\otimes$ determines a functor $p^*\colon \Fun(\cO, \cC)\to  \Fun(\Env_{\cO}(\Assoc), \cC)$ by precomposing with the underlying functor $p\colon \Env_\cO(\cC)\to \cO$, and $p^*$ has a left adjoint $p_!\colon \Fun(\Env_{\cO}(\Assoc), \cC)\to \Fun(\cO, \cC)$ given by the left Kan extension along $p$. 
Given a functor 
\(F\colon \Env_{\cO}(\Assoc)\to\cC\,,\) 
the induced functor $p_!F\colon\cO\to\cC$  is given on objects $X\in\cO$ by
    \[
    (p_! F)(X)=\operatorname{colim}\left
    ( \Env_\cO(\Assoc)\times_\cO \cO_{/X} \to \Env_\cO(\Assoc)\stackrel{F}\to \cC\right).\] 
    Since $p$ underlies a strong $\cO$-monoidal functor, the adjunction $p_! \dashv p^*$ is monoidal using the induced $\cO$-monoidal structure given by Day convolution on the functor categories \cite[3.16]{Keenan-Peroux}. In particular, we obtain an adjunction:
    \[
    \begin{tikzcd}[column sep=large]
\Fun_{\cO}^\lax(\Env_{\cO}(\Assoc), \cC)\ar[bend left=20]{r}{p_!}\ar[phantom, "\perp" description, xshift=-3.5ex]{r} & [2em]\Alg_{\cO}(\cC),\ar[bend left=20]{l}{p^*}
    \end{tikzcd}
\]
where $\Fun_{\cO}^\lax(\Env_{\cO}(\Assoc), \cC)$ denotes the $\infty$-category of lax $\cO$-mono\-idal functors  of the form $\Env_{\cO}(\Assoc)\to \cC$.
\end{itemize}
\end{construction}

\begin{remark}\label{coCart p correspond to f}
    The coCartesian fibration $p^\otimes\colon \Env_\cO(\Assoc)^\otimes\to \cO^\otimes$ seen as a strong $\cO$-monoidal functor corresponds to the map of $\infty$-operads $f^\otimes\colon \Assoc^\otimes\to \cO^\otimes$ using \cref{universal property of envelope} for $\cC=\cO$.
\end{remark}

\begin{remark}
If $\cO^\otimes=\mathbb{E}_n^\otimes$, or more generally if the underlying $\infty$-category $\cO$ is a contractible Kan complex, then the induced functor $p_!F\colon \cO\to \cC$ of a functor $F\colon \Env_{\cO}(\Assoc)\to \cC$ is entirely determined by its value on $*\in \cO$:
    \[
    p_!F\simeq \operatorname{colim}\left
    ( \Env_\cO(\Assoc)\stackrel{F}\to \cC\right).
    \]
\end{remark}

\begin{definition}
Let $\kappa$ be an infinite regular cardinal.
Suppose $\cO^\otimes$ is a $\kappa$-small $\infty$-operad. 
Let $f^\otimes\colon \Assoc^\otimes\to \cO^\otimes$  be a map of $\infty$-operads. Let $\cC$ be an $\cO$-monoidal $\infty$-category compatible with $\kappa$-small colimits and let $R$ be an $\mathbb{E}_1$-algebra in $\cC$. 
We define the \textit{$1$-dimensional $\mathscr{O}$-representation homology} of $R$ as 
\[ \mathrm{HR}^{\mathscr{O}}(R)\coloneqq f_{!}R\]
and we write $\mathrm{HR}_m^{\mathscr{O}}(R)=\pi_m\mathrm{HR}^{\mathscr{O}}(R)$.
\end{definition}

In the case $\mathscr{O}=\mathrm{Comm}$, this is a topological refinement of the definition of $1$-dimensional representation homology from~\cite[Theorem~2.2]{BKR13}. It is also closely related to the topological Andr\'e--Quillen homology of~\cite{KP17}. 

\subsection{Representation homology as topological symmetric homology}
We now prove that the free $\cO$-algebra generated by an associative algebra $R$ is precisely the colimit determined by its associated strong $\cO$-monoidal functor $F_R^\cO$ from \cref{strong monoidal functor associated to algebras}. The following result is a reformulation of the formula already appearing in \cite[3.1.3.3]{HA}; also see the recent work of \cite{BL26}.

\begin{theorem}\label{representation homology thm}
Let $\kappa$ be an infinite regular cardinal.
Suppose $\cO^\otimes$ is a $\kappa$-small $\infty$-operad. 
Let $f^\otimes\colon \Assoc^\otimes\to \cO^\otimes$  be a map of $\infty$-operads.
Let $\cC$ be an $\cO$-monoidal $\infty$-category compatible with $\kappa$-small colimits.
Let $R$ be an $\mathbb{E}_1$-algebra in $\cC$, and let $F_R^\cO\colon \Env_{\cO}(\Assoc)\to \cC$ be its associated strong $\cO$-monoidal functor.
Then there is an equivalence of $\cO$-algebras
\[
    f_!R\simeq p_!F_R^\cO
\]
(see \cref{free O-algebra and colimits construction}).
In particular, when $\cO^\otimes=\mathbb{E}_n^\otimes$:
    \[
    \HR^{\mathbb{E}_n}(R) \simeq \operatorname{colim}\left( \Env_{\mathbb{E}_n}(\Assoc)\stackrel{F_R^{\mathbb{E}_n}}\longrightarrow \cC\right).
    \]
\end{theorem}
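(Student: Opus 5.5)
We compare the two functors by their right adjoints and use the Yoneda lemma: we show that $f_!R$ and $p_!F_R^\cO$ corepresent the same functor on $\Alg_\cO(\cC)$. Both are left adjoints evaluated on the same datum, once $R$ is identified with $F_R^\cO$.

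Fix an $\cO$-algebra $A$, viewed as a map of $\infty$-operads $A\colon\cO^\otimes\to\cC^\otimes$. By the adjunction $p_!\dashv p^*$ of \cref{free O-algebra and colimits construction},
\[
\Map_{\Alg_\cO(\cC)}(p_!F_R^\cO,A)\simeq \Map_{\Fun^{\lax}_\cO(\Env_\cO(\Assoc),\cC)}(F_R^\cO,p^*A).
\]
Under the Day convolution identifications, $p^*A$ is the lax $\cO$-monoidal functor $A\circ p^\otimes$. The key step is the following refinement of the universal property in \cref{universal property of envelope}: restriction along $\iota^\otimes$ gives an equivalence between lax $\cO$-monoidal functors $\Env_\cO(\Assoc)\to\cC$ (equivalently, maps of $\infty$-operads $\Env_\cO(\Assoc)^\otimes\to\cC^\otimes$ over $\cO^\otimes$) and maps of $\infty$-operads $\Assoc^\otimes\to\cC^\otimes$ over $f^\otimes$. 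This is \cite[2.2.4.9]{HA} in its lax form, which holds for arbitrary maps of operads into $\cC^\otimes$, not only strong ones. Moreover, it restricts to the strong-monoidal equivalence of \cref{universal property of envelope} on the full subcategory of strong $\cO$-monoidal functors, so it sends $F_R^\cO$ to $R$. Applying it gives
\[
\Map(F_R^\cO,p^*A)\simeq \Map_{\Alg_{\mathbb{E}_1}(\cC)}(\iota^*F_R^\cO,\iota^*p^*A)\simeq \Map_{\Alg_{\mathbb{E}_1}(\cC)}(R,f^*A),
\]
where we use $\iota^*F_R^\cO\simeq R$ (\cref{strong monoidal functor associated to algebras}) and $p^\otimes\circ\iota^\otimes\simeq f^\otimes$ (\cref{coCart p correspond to f}). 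The last space is $\Map(f_!R,A)$ by the definition of $f_!$. All these equivalences are natural in $A$, so the Yoneda lemma gives $f_!R\simeq p_!F_R^\cO$ as $\cO$-algebras.

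For the particular case $\cO^\otimes=\mathbb{E}_n^\otimes$, the underlying $\infty$-category of $\mathbb{E}_n$ is contractible. By the remark following \cref{coCart p correspond to f}, the underlying object of $p_!F$ is therefore the colimit of $F$ over $\Env_{\mathbb{E}_n}(\Assoc)$. Combining this with the definition $\HR^{\mathbb{E}_n}(R)=f_!R$ gives the displayed formula.

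The main obstacle is the lax version of the envelope's universal property, together with checking that the mapping spaces in $\Fun^{\lax}_\cO$ computed via Day convolution agree with those of operad maps over $\cO^\otimes$. If this identification is delicate, the fallback is to compute $f_!R$ directly by the operadic left Kan extension formula \cite[3.1.3.3]{HA}. That formula writes $f_!R(X)$ as the colimit over $\Assoc^\otimes_{\mathrm{act}}\times_{\cO^\otimes_{\mathrm{act}}}(\cO^\otimes_{\mathrm{act}})_{/X}$ of the tensor products of copies of $R$. One would then observe that this indexing category is exactly $\Env_\cO(\Assoc)\times_\cO\cO_{/X}$ by \cref{Monoidal envelopes without fibration}, and that the diagram is $F_R^\cO$ by \eqref{equation 1}, matching the pointwise formula for $p_!$.
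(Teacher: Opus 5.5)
Your skeleton is the same as the paper's: Yoneda, then the adjunction $p_!\dashv p^*$, then restriction along $\iota^\otimes$ using $\iota^*F_R^\cO\simeq R$ and $p^\otimes\circ\iota^\otimes\simeq f^\otimes$. However, the step you call the key one is false as stated. There is no lax form of \cite[2.2.4.9]{HA}: restriction along $\iota^\otimes$ is \emph{not} an equivalence from lax $\cO$-monoidal functors $\Env_\cO(\Assoc)\to\cC$ to $\Alg_{\mathbb{E}_1}(\cC)$.

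The functor $p^*A$ in your own chain is a counterexample. Take $\cO=\mathbb{E}_\infty$, $\cC=\Sp$ and $A$ an $\mathbb{E}_\infty$-ring. Then $p^*A=A\circ p^\otimes$ is the constant functor $\DS_+\to\Sp$ at $A$, with lax structure maps the multiplication $A\otimes A\to A$ and the unit $\mathbb{S}\to A$. It restricts along $\iota^\otimes$ to the underlying $\mathbb{E}_1$-ring $f^*A$. The strong functor $F^{\mathbb{E}_\infty}_{f^*A}\colon\langle n\rangle\mapsto A^{\otimes n}$ restricts to the same thing. Yet the two already differ at $\langle 0\rangle$, where they take the values $A$ and $\mathbb{S}$. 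So $\iota^*$ is not even injective on equivalence classes of lax functors.

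What your chain actually uses is a one-sided statement, and that statement is true: $F_R^\cO$ is free on $R$ among lax functors. That is, for every lax $\cO$-monoidal $G$, restriction induces
\[
\Map_{\Fun^{\lax}_{\cO}(\Env_\cO(\Assoc),\cC)}(F_R^\cO,G)\simeq\Map_{\Alg_{\mathbb{E}_1}(\cC)}(R,\iota^*G).
\]
To see this, first note that $\iota^\otimes$ is fully faithful. Next, for an object $E=(x,\alpha\colon f(x)\to X)$ of the envelope, the comma category of $\iota^\otimes$ over $E$ has terminal object $(x,\mathrm{id})\to E$. Hence the left Kan extension of $R$ along $\iota^\otimes$, relative to $\cO^\otimes$, exists and sends $E$ to the pushforward along $\alpha$ of the value of $R$ at $x$. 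That is precisely $F_R^\cO(E)$, and this is how \cite[2.2.4.9]{HA} is proved. The universal property of relative left Kan extensions \cite[\S4.3.2]{HTT} then gives the displayed equivalence for all functors over $\cO^\otimes$. Maps of $\infty$-operads form full subcategories on both sides, so it restricts to lax functors and $\mathbb{E}_1$-algebras. Substituting this for your lax equivalence, your argument goes through verbatim.

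For comparison, the paper avoids any lax statement by passing to the full subcategory $\Fun^\otimes_\cO$, treating $p^*A$ as the strong functor $F^\cO_{f^*A}$. By the example above, there is only a non-invertible comparison map $F^\cO_{f^*A}\to p^*A$, induced by the algebra structure of $A$. So that step also ultimately rests on the freeness just described.

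Your fallback through \cite[3.1.3.3]{HA} is a legitimate alternative; the paper itself calls the theorem a reformulation of that formula. Two points need care.
\begin{itemize}
\item The indexing category in that formula is the fiber $p^{-1}(X)$. It is not equal to $\Env_\cO(\Assoc)\times_\cO\cO_{/X}$, only cofinal in it: the inclusion has a left adjoint given by coCartesian pushforward, since $p$ is a coCartesian fibration.
\item Matching pointwise formulas only identifies underlying objects. To get an equivalence of $\cO$-algebras you still need a map of $\cO$-algebras, for instance the adjoint of $R\simeq\iota^*F_R^\cO\to\iota^*p^*p_!F_R^\cO\simeq f^*p_!F_R^\cO$. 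You must then check that it induces the pointwise identification.
\end{itemize}
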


\begin{proof}
Recall that by \cref{universal property of envelope} the $\infty$-category $\Alg_{\cO}(\cC)$ is equivalent to the $\infty$-category of strong $\cO$-monoidal functors $\Fun_{\cO}^{\otimes}(\Env_{\cO}(\cO), \cC)$.
Therefore, by \cref{coCart p correspond to f}, we can equivalently view $p^*$ as sending any $\cO$-algebra $A^\otimes\colon \cO^\otimes\to \cC^\otimes$ to the strong $\cO$-monoidal functor:
\[
\begin{tikzcd}
    \Env_{\cO}(\Assoc)^\otimes\ar{r}{\Env_{\cO}(f)}  & [2em]\Env_{\cO}(\cO)^\otimes \ar{r}{\Env_\cO(A)} & [2em]\Env_{\cO}(\cC)^\otimes \ar{r}{\otimes} & \cC^\otimes.
\end{tikzcd}
\]
Recall from \cref{strong monoidal functor associated to algebras,iota functor} that $\iota^*(F_R^\cO)\simeq R$ and $\iota^*(p^*(A))\simeq f^*(A)$ since the adjunct of the strong $\cO$-monoidal functor $p^*(A^\otimes)\colon \Env_{\cO}(\Assoc)^\otimes\to \cC^\otimes$ is precisely the map of $\infty$-operads:
\[
\begin{tikzcd}
    \Assoc^\otimes\ar{r}{f^\otimes}  & \cO^\otimes  \ar{r}{A^\otimes} & \cC^\otimes.
\end{tikzcd}
\]
Therefore, for any $\cO$-algebra $A$ in $\cC$, we obtain the natural string of equivalences of spaces:
\begin{align*}
    \Map_{\Alg_{\cO}(\cC)}(p_!F_R^\cO, A) & \simeq\Map_{\Fun_{\cO}^\lax(\Env_{\cO}(\Assoc), \cC)}(F_R^\cO, p^*(A))\\
   &  \simeq\Map_{\Fun_{\cO}^\otimes(\Env_{\cO}(\Assoc), \cC)}(F_R^\cO, p^*(A))\\
   & \simeq \Map_{\Alg_{\mathbb{E}_1}(\cC)}(\iota^*(F_R^\cO), \iota^*(p^*(A))\\
   & \simeq \Map_{\Alg_{\mathbb{E}_1}(\cC)}(R, f^*(A))\\
   & \simeq \Map_{\Alg_{\cO}(\cC)}(f_!R, A).
\end{align*}
The second equivalence is due to $\Fun_{\cO}^\otimes(\Env_{\cO}(\Assoc), \cC)$ being a full subcategory of the $\infty$-category $\Fun_{\cO}^\lax(\Env_{\cO}(\Assoc), \cC)$, and therefore have equivalent mapping spaces.
We can therefore conclude  $p_!F_R^\cO\simeq f_!R$ as $\cO$-algebras by the Yoneda lemma.
\end{proof}

\begin{remark}
    This result applies more generally: given any fibration of $\infty$-operads $\mathscr{P}^\otimes\to \cO^\otimes$, and any $\mathscr{P}$-algebra $R$ in an $\cO$-monoidal $\infty$-category $\cC$ with compatible colimits, the colimit of the associated strong $\cO$-monoidal functor $\Env_\cO(\mathscr{P})\to \cC$ is equivalent to the free $\cO$-algebra in $\cC$ generated by $R$, see also independent work~\cite{BL26}. 
\end{remark}

One of the main theorems of~\cite{Berest-Ramadoss} identifies $1$-dimensional representation homology with symmetric homology over a field of characteristic zero. The following result is a topological refinement of this theorem. Notably, in the topological setting we can remove the characteristic zero assumption. This proves a topological analogue of a conjecture of Ault, see~\cite[Conjecture 2]{Berest-Ramadoss}. 

\begin{corollary}\label{topological symmetric homology is topological representation homology}
Let $R$ be an $\mathbb{E}_1$-ring spectrum.
There is a natural equivalence of $\mathbb{E}_{\infty}$-ring spectra
\[ 
\TS(R)\simeq \mathrm{HR}^{\mathbb{E}_{\infty}}(R) \,.
\]
\end{corollary}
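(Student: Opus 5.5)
The plan is to specialize \cref{representation homology thm} to $\cO^\otimes=\mathbb{E}_\infty^\otimes$, $\cC=\Sp$ with its smash product, and $f^\otimes\colon\Assoc^\otimes\to\mathbb{E}_\infty^\otimes$ the canonical map of $\infty$-operads. The hypotheses hold: $\mathbb{E}_\infty^\otimes$ is $\kappa$-small for $\kappa=\aleph_0$ (or any infinite regular cardinal), and the symmetric monoidal structure on $\Sp$ is compatible with all small colimits. The theorem then yields an equivalence of $\mathbb{E}_\infty$-ring spectra
\[
\HR^{\mathbb{E}_\infty}(R)=f_!R\simeq p_!F_R,
\]
where $F_R=F_R^{\mathbb{E}_\infty}$ is the strong symmetric monoidal functor $\Env(\Assoc)\to\Sp$ of \cref{strong monoidal functor associated to algebras}. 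Since the underlying $\infty$-category of $\mathbb{E}_\infty^\otimes$ is contractible, the underlying spectrum of $p_!F_R$ is $\operatorname{colim}_{\Env(\Assoc)}F_R$.

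Next I identify this colimit with $\TS(R)$. \cref{lemma:THG} shows that the colimit of $F_R$ over $\Env(\Assoc)\simeq\DS_+$ agrees with the colimit over $\DS$, because $\DS\hookrightarrow\DS_+$ is cofinal; the latter is $\TS(R)$ by \cref{TS-def}. This gives an equivalence of underlying spectra $\TS(R)\simeq\HR^{\mathbb{E}_\infty}(R)$. Naturality in $R$ holds because $R\mapsto F_R$ is the functorial equivalence $\iota^*$ of \cref{universal property of envelope} inverted, while $p_!$ and colimits are functorial.

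The main point to check is that the equivalence respects the $\mathbb{E}_\infty$-structures, i.e.\ that the structure from \cref{Einfty-structure} is the one on $p_!F_R$. In \cref{Einfty-structure} the structure comes from Day convolution: colimit along $\Env(\Assoc)\to *$ of a lax symmetric monoidal functor, which is left adjoint to the symmetric monoidal restriction along the terminal map. In \cref{free O-algebra and colimits construction}, $p_!$ is the left Kan extension along $p\colon\Env(\Assoc)\to\mathbb{E}_\infty\simeq *$, endowed with its lax structure through the same Day convolution adjunction of \cite{Keenan-Peroux}. So both are the image of $F_R$ under the same left adjoint $\Fun^{\lax}(\Env(\Assoc),\Sp)\to\Alg_{\mathbb{E}_\infty}(\Sp)$, and they agree as $\mathbb{E}_\infty$-rings. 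The remaining detail is that the cofinality comparison of \cref{lemma:THG} is only used at the level of underlying spectra, so it needs no compatibility with multiplicative structure. This confirms \cref{intro theorem A}.
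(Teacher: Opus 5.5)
Your proposal is correct and follows essentially the paper's route: specialize \cref{representation homology thm} to $\cO^\otimes=\mathbb{E}_\infty^\otimes$ and $\cC=\Sp$, use \cref{lemma:THG} to identify $\operatorname{colim}_{\Env(\Assoc)}F_R$ with $\TS(R)$, and note that both $\mathbb{E}_\infty$-structures arise from the same Day convolution left adjoint. One small slip is that $N(\Fin_*)$ has infinitely many objects, so it is not $\aleph_0$-small; take an uncountable $\kappa$ such as $\aleph_1$ instead, which is harmless since the smash product on $\Sp$ preserves all small colimits in each variable.
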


Thus, $\TS\colon \Alg_{\mathbb{E}_1}(\Sp)\to \Alg_{\mathbb{E}_\infty}(\Sp)$ is a left adjoint to  the forgetful functor. This gives a refinement of the identification in \cite[Corollary~6.2]{THG}. 

\begin{remark}\label{rem-formula-TS}
Unpacking the Bousfield--Kan formula for the colimit in \cref{TS-def}, we have an equivalence 
\[
\TS(R) \simeq \coprod_{n\ge -1}([n]\downarrow \DS)\times R^{\times n+1}/\sim   
\]
and as a special case of~\cite[Lemma~6.8]{THG}, we can identify $([n]\downarrow \DS)=E\Sigma_{n+1}$\,. 
Since every map in $\DS$ factors as a composite of an element in the symmetric group followed by a map in $\Delta$,  
we have an explicit presentation
\[
\TS(R)\simeq \coprod_{n\ge -1}\mathbb{E}_{\infty}(n+1)\times_{\Sigma_{n+1}} R^{\times n+1}
\]
of the homotopy colimit defining $\TS(R)$, cf.~\cite[Proposition~6.10]{THG}. This could be used to give an independent proof of \cref{topological symmetric homology is topological representation homology}. 
\end{remark}

In the following corollary we write $QX \coloneqq \Omega^{\infty}\Sigma^{\infty}X$ for any pointed connected space $X$.

\begin{corollary}\label{theorem: TS of Omega X as a spectrum}
    Let $X$ be a pointed connected space. There is an equivalence of $\mathbb{E}_\infty$-ring spectra: $\TS(\Sigma^\infty_+ \Omega X)\simeq \Sigma^\infty_+ \Omega QX$.  
\end{corollary}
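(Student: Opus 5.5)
The plan is to reduce everything to free algebras and compose left adjoints. By \cref{topological symmetric homology is topological representation homology}, $\TS$ is left adjoint to the forgetful functor $\Alg_{\mathbb{E}_\infty}(\Sp)\to\Alg_{\mathbb{E}_1}(\Sp)$, i.e.\ $\TS\simeq f_!$ for $f\colon\Assoc^\otimes\to\mathrm{Comm}^\otimes$. The functor $\Sigma^\infty_+$ is strong symmetric monoidal and left adjoint to $\Omega^\infty$, so it commutes with $f_!$:
\[
f_!^{\Sp}\circ\Sigma^\infty_+\simeq \Sigma^\infty_+\circ f_!^{\mathcal S}\colon \Alg_{\mathbb{E}_1}(\mathcal S)\to\Alg_{\mathbb{E}_\infty}(\Sp).
\]
Here $\mathcal S$ denotes spaces with the cartesian product. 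Both sides are left adjoint to the same composite of forgetful functors: $\Omega^\infty$ is lax symmetric monoidal and commutes with forgetting from $\mathbb{E}_\infty$ to $\mathbb{E}_1$. Alternatively, one can apply \cref{representation homology thm} in both $\mathcal S$ and $\Sp$; then $\Sigma^\infty_+$ preserves colimits and $F_{\Sigma^\infty_+ M}\simeq\Sigma^\infty_+\circ F_M$ by the universal property in \cref{universal property of envelope}. Either way,
\[
\TS(\Sigma^\infty_+\Omega X)\simeq\Sigma^\infty_+\,\mathrm{HR}^{\mathbb{E}_\infty}_{\mathcal S}(\Omega X),
\]
where the right-hand side is the free $\mathbb{E}_\infty$-space on the $\mathbb{E}_1$-space $\Omega X$.

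It remains to identify the free $\mathbb{E}_\infty$-space on the grouplike $\mathbb{E}_1$-space $\Omega X$ with $\Omega QX$. I would use the equivalence $\Omega\colon \mathcal S_*^{\geq1}\simeq\Alg^{\mathrm{gp}}_{\mathbb{E}_1}(\mathcal S)$. The key step is to show that the free $\mathbb{E}_\infty$-space on a grouplike $\mathbb{E}_1$-space is already grouplike. This holds because $\pi_0$ of the free algebra is the free commutative monoid on the group $\pi_0\Omega X$, via the presentation in \cref{rem-formula-TS}, and hence is generated by invertible elements. Granting this, for any $\mathbb{E}_\infty$-space $A$ we have $\Map_{\mathbb{E}_1}(\Omega X,A)\simeq\Map_{\mathbb{E}_1}(\Omega X,A^\times)$, where $A^\times$ denotes the grouplike units.

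The rest is a chain of adjunctions. Grouplike $\mathbb{E}_\infty$-spaces are connective spectra, and grouplike $\mathbb{E}_1$-spaces are pointed connected spaces via $B$. The forgetful functor $\Sp_{\geq0}\to\Alg^{\mathrm{gp}}_{\mathbb{E}_1}(\mathcal S)\simeq\mathcal S_*^{\geq1}$ is $E\mapsto\Omega^{\infty-1}E$, whose left adjoint is $X\mapsto\Sigma^{-1}\Sigma^\infty X$. Hence the free grouplike $\mathbb{E}_\infty$-space is
\[
\Omega^\infty\Sigma^{-1}\Sigma^\infty X\simeq\Omega QX.
\]
Combined with the grouplike step, this identifies the free $\mathbb{E}_\infty$-space on $\Omega X$ with $\Omega QX$, and applying $\Sigma^\infty_+$ gives the stated equivalence of $\mathbb{E}_\infty$-ring spectra.

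The main obstacle is the grouplike step: showing that the free $\mathbb{E}_\infty$-space on a grouplike $\mathbb{E}_1$-space is grouplike, so that free and free-grouplike agree. If the $\pi_0$ argument is too hand-wavy, I would fall back on the explicit model $\coprod_n E\Sigma_n\times_{\Sigma_n}(\Omega X)^n$ from \cref{rem-formula-TS}. Its $\pi_0$ is a quotient of $\coprod_n(\pi_0\Omega X)^n/\Sigma_n$, which I would check directly is a group under concatenation because each generator has an inverse.
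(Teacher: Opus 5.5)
Your argument is correct. The paper gives no separate proof of this corollary; it presents it as a consequence of \cref{topological symmetric homology is topological representation homology}. Your first step, commuting $\Sigma^\infty_+$ past the free functor $f_!$, is the algebra-level form of \cref{thm: HG commutes with strong monoidal functors} with $F=\Sigma^\infty_+$. So the only thing the paper leaves implicit is the identification of the free $\mathbb{E}_\infty$-space on $\Omega X$.

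The paper does carry out the analogous identification in the braided case (\cref{main prop for TP of loop spaces} and the theorem after it). There it resolves $M$ by the monadic bar construction $B(J,J,M)$ and uses May's approximation theorem and $B(\Sigma,J,M)\simeq BM$. Your route instead goes through $\mathrm{Grp}_{\mathbb{E}_\infty}(\mathcal S)\simeq\Sp_{\geq0}$ and $\mathrm{Grp}_{\mathbb{E}_1}(\mathcal S)\simeq\mathcal S_*^{\geq1}$. It is purely formal, needs no approximation theorem, and keeps track of the $\mathbb{E}_\infty$-structure automatically. Replacing $\Sp_{\geq0}$ by $\mathcal S_*^{\geq2}$, where the forgetful functor is $\Omega$ with left adjoint $\Sigma$, the same argument also recovers $\Omega^2\Sigma X$ in the braided case.

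The step you flag as the main obstacle is not needed. Since $\Omega QX$ and $\Omega X$ are both grouplike, for every $\mathbb{E}_\infty$-space $A$ you have
\[
\Map_{\mathbb{E}_\infty}(\Omega QX,A)\simeq\Map_{\mathbb{E}_\infty}(\Omega QX,A^\times)\simeq\Map_{\mathbb{E}_1}(\Omega X,A^\times)\simeq\Map_{\mathbb{E}_1}(\Omega X,A).
\]
So $\Omega QX$ already corepresents $\Map_{\mathbb{E}_1}(\Omega X,-)$ on all $\mathbb{E}_\infty$-spaces. Grouplikeness of the free algebra is then an output, not an input.

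If you keep the $\pi_0$ argument, be careful. The second display in \cref{rem-formula-TS} omits the identifications coming from the maps in $\Delta$. Read literally, its $\pi_0$ is the free commutative monoid on the set $\pi_1X$, which has no nontrivial units, so your claim that it is generated by invertible elements would fail. The correct $\pi_0$ is $\pi_1(X)^{\mathrm{ab}}$, the commutative-monoid reflection of the group (compare \cref{cor:pi0}). Your fallback, which passes to the quotient by the multiplication and unit relations, is exactly what produces the inverses.
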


We can also reproduce a topological analogue of the character map studied in~\cite{BKR13,Berest-Ramadoss}. 

\begin{proposition}\label{prop:character-map}
Let $R$ be an $\mathbb{E}_1$-ring spectrum. Then there is a canonical map of spectra
\[ 
\THH(R)\to \TS(R)\,.
\]
that factors through $\TC^{+}(R)$.
\end{proposition}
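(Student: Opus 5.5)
The plan is to obtain the character map as the map on colimits induced by the chain of functors used to define $\THH$ and $\TC^+$. By definition, $\THH(R)$ is the colimit of the composite
\[
\Delta^\op\hookrightarrow \DC^\op\simeq \DC \hookrightarrow \DS\hookrightarrow \DS_+\simeq \Env(\Assoc)\xrightarrow{F_R}\Sp,
\]
and $\TC^+(R)$ is the colimit of the restriction of $F_R$ along $\DC\hookrightarrow\DS\hookrightarrow\DS_+$. Likewise, $\TS(R)$ is the colimit of $F_R$ restricted to $\DS$, or equivalently, by \cref{lemma:THG}, the colimit over all of $\Env(\Assoc)$.

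For any functor $j\colon I\to J$ and diagram $G\colon J\to\Sp$, there is a canonical comparison map
\[
\operatorname{colim}_I (G\circ j)\to \operatorname{colim}_J G,
\]
which is the counit of $j_!\dashv j^*$ evaluated after taking colimits. These comparison maps are natural in $G$ and compatible with composition of functors: for $I\xrightarrow{j}J\xrightarrow{k}K$, the map for $kj$ factors as the map for $j$ followed by the map for $k$.

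I would apply this first to $j=\Delta^\op\hookrightarrow\DC^\op\simeq\DC$, with $G=F_R|_{\DC}$. This gives $\THH(R)\to \TC^+(R)$. Since $\TC^+(R)\simeq \THH(R)_{hS^1}$, this map is identified with the canonical map to homotopy orbits, by \cite[B.5]{NS18} and \cite[4.12, 4.21]{THG}. That identification is not strictly needed for the statement, but it shows the first map is the expected one.

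Next I would apply the same construction to the inclusion $\DC\hookrightarrow \DS$ with $G=F_R|_{\DS}$. This gives $\TC^+(R)\to\TS(R)$. Functoriality of the comparison maps under composition then shows that the composite $\THH(R)\to\TC^+(R)\to\TS(R)$ equals the comparison map for the full composite $\Delta^\op\to\DS$. The result is the desired canonical map, which is natural in $R$ because $F_R$ is natural in $R$ via \cref{universal property of envelope}.

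The main point requiring care is coherence of the identification $\DC^\op\simeq\DC$ via $\sdual$. The definition of $\THH$ in \cite{NS18} uses $\Delta^\op\to\DC^\op$ followed by duality, so I must check that the composite $\Delta^\op\to\DS$ really is the functor used in the definition of $\THH$. This holds by construction, since $\THH$ was defined through exactly this composite. Consequently no genuine obstruction arises: the statement is a purely formal consequence of colimits being functorial along restriction of diagrams.

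If one wants the map to be compatible with the $S^1$-action, so that it is $S^1$-invariant with $\TS(R)$ carrying the trivial action, one notes that the map $\THH(R)\to\TS(R)$ factors through the colimit over $\DC$. By the cyclic-category description of homotopy orbits, that colimit is exactly $\THH(R)_{hS^1}$.
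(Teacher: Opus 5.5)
Your argument is correct for the statement as written, but it builds the map differently from the paper. The paper uses that $\TS(R)$ is the free $\mathbb{E}_\infty$-ring on $R$. The unit $R\to\TS(R)$ is a map of $\mathbb{E}_1$-rings, so it induces $\THH(R)\to\THH(\TS(R))$. The paper then composes with the augmentation $\THH(\TS(R))\to\TS(R)$, which is given levelwise on the cyclic bar construction by the $\mathbb{E}_\infty$-multiplication. This composite is $S^1$-equivariant with trivial action on the target, so it factors through $\THH(R)_{hS^1}=\TC^+(R)$.

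You instead take the comparison map of colimits along $\Delta^{\op}\to\DC^{\op}\cong\DC\hookrightarrow\DS$. Your route is purely formal. It needs neither the $\mathbb{E}_\infty$-structure nor the freeness of $\TS(R)$, and the factorization through $\TC^+(R)=\operatorname{colim}_{\DC}F_R$ is built in. The paper's route is what exhibits the map as the topological analogue of the Berest--Ramadoss character map, namely as induced by the universal map from $R$ to a commutative ring.

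You should know that the remark immediately after this proposition says the map induced by $\Delta^{\op}\to\DC^{\op}\to\DS$ does not agree with the paper's map, and that is exactly your map. So you should not present your construction as the paper's character map without a comparison. As far as I can tell, though, the two maps coincide, and here is how you could show it.

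Let $y\colon\DS_+\to\mathcal{P}(\DS_+)$ be the Yoneda embedding, which is symmetric monoidal for Day convolution. Let $\widetilde{F}\colon\mathcal{P}(\DS_+)\to\Sp$ be the colimit-preserving symmetric monoidal extension of $F_R$, so that $\widetilde{F}(y\langle 1\rangle)\simeq R$ as $\mathbb{E}_1$-algebras. Both constructions make sense for the algebra $y\langle 1\rangle$. Both are carried by $\widetilde{F}$ to the corresponding constructions for $R$, because $\widetilde{F}$ preserves colimits, $\THH$, free $\mathbb{E}_\infty$-algebras with their units, and augmentations. For $y\langle 1\rangle$ the common target is $\operatorname{colim}_{\DS_+}y$, which is the terminal presheaf. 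Hence any two maps $\THH(y\langle 1\rangle)\to\operatorname{colim}_{\DS_+}y$ are homotopic, and so are any two $S^1$-equivariant ones for the trivial action on the target.

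So the two maps $\THH(R)\to\TS(R)$, together with their factorizations through $\TC^+(R)$, should agree, and that remark appears to be mistaken. Adding this comparison would turn your proof into a proof of the proposition in the sense the paper intends.
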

\begin{proof}
Since $\TS(R)$ is the free $\mathbb{E}_{\infty}$-ring on $R$, there is a unit map of $\mathbb{E}_1$-ring spectra $R\to \TS(R)$. Applying $\THH$ we obtain a composite 
\begin{equation}\label{eq: composite}
    \THH(R)\to  \THH(\TS(R))\to\TS(R)
\end{equation}
where the second map is the augmentation which arises from the fact that $\TS(R)$ is an $\mathbb{E}_{\infty}$-ring.  Explicitly, this map is obtained by considering a map of simplicial spectra from the cyclic bar construction on $\TS(R)$ to the constant simplicial spectrum on $\TS(R)$. Levelwise, this map is given by the multiplication of $\TS(R)$. The composite \eqref{eq: composite} is $S^1$-equivariant, where the target has trivial action, and therefore it factors through $\TC^+(R)=\THH(R)_{hS^1}$.
\end{proof}

\begin{remark}
There is another natural map 
\( \THH(R)\to \TS(R) \)
that factors through $\TC^+(R)$ induced by the functors 
\( \Delta^{\op}\to \DC^{\op}\to \DS\),
but this does not agree with the one constructed in \cref{prop:character-map}. 
\end{remark}

\section{Topological braid homology}

In this section we study topological braid homology, which is associated to the crossed simplicial group $\DB$ that extends $\Delta$ via the addition of the braid groups. 

\subsection{Braid homology}
We recall here the braided category $\DB$ and its associated homologies.  Let $B_n$ denote the braid group on $n$ strands.  We write $\pi\colon B_n\to \Sigma_n$ for the homomorphism which sends a braid to the underlying permutation of vertices.

\begin{definition}

The \emph{braided category}  $\DB$ \cite[3.8]{FL91} is a category with objects $[n]$ for $n\geq 0$, and morphisms generated by the coface and codegeneracy maps in $\Delta$, $\delta^i_n\colon [n-1]\to [n]$ and $\sigma^i_n\colon [n+1]\to [n]$ for $0\leq i \leq n$, as well as maps $t^i_n\colon [n]\to [n]$ for $0\leq i \leq n-1$ viewed as the generators of $B_{n+1}$ (elementary braids on $n+1$ strands in which the $i$-th strand crosses once over the $(i+1)$-st strand while all other strands stay straight). These morphisms are subject to the usual cosimplicial identities, Artin relations (i.e., $t^i_n t^j_n=t^j_n t^i_n$ for $|i-j|>1$ and $t^i_n t^{i+1}_n t^i_n=t^{i+1}_n t^i_n t^{i+1}_n$), as well as the same mixed relations \eqref{eq:mixes relations} as in $\DS$.
\end{definition}

\begin{remark}
    Every morphism $[n]\to [m]$ in $\DB$ factors uniquely as a composite $\phi\circ b$ where $b\in B_{n+1}^\op$ and $\phi\colon [n]\to [m]$ in $\Delta$. By \cite[Proposition 1.6]{FL91}, the category $\DB$ is entirely determined by functions $b^*\colon \Hom_\Delta([m], [n])\to \Hom_\Delta([m], [n])$ for each $b\in B_{n+1}$, and functions $\phi^*\colon B_{n+1}\to B_{m+1}$ for each $\phi\colon [m]\to [n]$ in $\Delta$, which we now recall. 

    For a braid $b\in B_{n+1}$, the operation $b^\ast\in \Hom_\Delta([m], [n])$ is given as follows. For an ordered map $\phi\colon [m]\to [n]$, the ordered map $b^\ast\phi$ is exactly $\pi(b)^\ast\phi$ from the definition of $\DS$. 

    For a map $\phi\colon [m]\to [n]$ in $\Delta$, we define the operation $\phi^\ast\colon B_{n+1}\to B_{m+1}$ as follows. To a braid $b\in B_{n+1}$, corresponding to a diagram with $n+1$ strands, we define $\phi^*b$ by first replacing the $i$-th strand in $b$ by $|\phi^{-1}(i)|$ parallel strands if $\phi^{-1}(i)$ is nonempty. We refer to these parallel strands as a block. If $\phi^{-1}(i)$ is empty, we omit this strand when forming $\phi^*b$. We then keep these blocks together, and define the crossings so that every strand in the $i$-th block of $\phi^*b$ crosses over/under each strand in the $j$-th block of $\phi^*b$ exactly as the original strand $i$ in $b$ crosses over/under the strand $j$ in $b$. Then $\phi^\ast(b)$ is the resulting braid on $m+1$ strands. See~\cref{figure-braid-operation} for an example.  
\end{remark}

\begin{figure}[ht!]
\begin{tikzpicture}[baseline= (a).base]
\node[scale=0.6] (a) at (1,1){
\begin{tikzcd}[row sep=small]
	&&&&&& |[alias=TL5]|\textcolor{rgb,255:red,214;green,92;blue,214}{5} &&& {b^\ast \phi} \\
	&&&&& |[alias=TL4]|\textcolor{rgb,255:red,214;green,92;blue,214}{4} \\
	&&&& |[alias=TL3]|\textcolor{rgb,255:red,92;green,214;blue,92}{3} &&&&&&&& |[alias=TR3]|\textcolor{rgb,255:red,214;green,92;blue,214}{3} \\
	&&& |[alias=TL2]|\textcolor{rgb,255:red,92;green,214;blue,92}{2} &&&&&&&& |[alias=TR2]|\textcolor{rgb,255:red,92;green,214;blue,92}{2} \\
	&& |[alias=TL1]|\textcolor{rgb,255:red,92;green,214;blue,92}{1} &&&&&&&& |[alias=TR1]|\textcolor{rgb,255:red,92;green,92;blue,214}{1} \\
	& |[alias=TL0]|\textcolor{rgb,255:red,228;green,158;blue,78}{0} &&&&&&&& |[alias=TR0]|\textcolor{rgb,255:red,228;green,158;blue,78}{0} \\
&&&&&&&\\
&&&&&&& \\
&&&&&&&&\\
&&&&&&&&\\
&&&&&&&\\
&&&&&&&&\\
&&&&&&&&\\
&&&&&&&&\\
&&&&&&&&&\\
&&&&&&&&&\\
	&&&&&& |[alias=BL5]|\textcolor{rgb,255:red,92;green,214;blue,92}{5} &&&&&& & {b} \\
	&&&&& |[alias=BL4]|\textcolor{rgb,255:red,92;green,214;blue,92}{4} \\
	{\phi^\ast b} &&&& |[alias=BL3]|\textcolor{rgb,255:red,92;green,214;blue,92}{3} &&&&&&&& |[alias=BR3]|\textcolor{rgb,255:red,92;green,92;blue,214}{3} \\
	&&& |[alias=BL2]|\textcolor{rgb,255:red,228;green,158;blue,78}{2} &&&&&&&& |[alias=BR2]|\textcolor{rgb,255:red,92;green,214;blue,92}{2} \\
	&& |[alias=BL1]|\textcolor{rgb,255:red,214;green,92;blue,214}{1} &&&&&&&& |[alias=BR1]|\textcolor{rgb,255:red,228;green,158;blue,78}{1} \\
	& |[alias=BL0]|\textcolor{rgb,255:red,214;green,92;blue,214}{0} &&&&&&&& |[alias=BR0]|\textcolor{rgb,255:red,214;green,92;blue,214}{0} \\
	&&&&& {\phi}
    %%%%%%%%LEFT STRANDS%%%%%%%%%%%%
\arrow[draw={rgb,255:red,92;green,214;blue,92}, no head, from=TL1, to=BL3, line width=1.4pt, bend right=65, ""{pos=0.3, coordinate, name=C}, ""{pos=0.55, coordinate, name=J}] %Left green 1
\arrow[draw={rgb,255:red,92;green,214;blue,92}, no head, from=TL2, to=BL4, line width=1.4pt, bend right=60, ""{pos=0.38, coordinate, name=D}, ""{pos=0.52, coordinate, name=I}] %Left green 2
	\arrow[draw={rgb,255:red,92;green,214;blue,92}, no head, from=TL3, to=BL5, line width=1.4pt, bend right=55, ""{pos=0.4, coordinate, name=E}, ""{pos=0.5, coordinate, name=H}] % Left green 3
\arrow[draw={rgb,255:red,214;green,92;blue,214}, no head, from=TL4, to=BL0, line width=1.4pt, looseness=0.8, in=60,out =-65, ""{pos=0.8, coordinate, name=X}, ""{pos=0.38, coordinate, name=G}, ""{description,pos=0.68, fill=white, inner sep=3pt}, ""{description,pos=0.545, fill=white, inner sep=3pt}, ""{description,pos=0.39, fill=white, inner sep=3pt}]% Left rose 4
\arrow[draw={rgb,255:red,214;green,92;blue,214}, no head, from=TL5, to=BL1, line width=1.4pt, looseness=0.8, in=55,out =-30, ""{pos=0.47, coordinate, name=F}, ""{description,pos=0.825, fill=white, inner sep=3pt}, ""{description,pos=0.708, fill=white, inner sep=3pt}, ""{description,pos=0.59, fill=white, inner sep=3pt}, ""{pos=0.52, coordinate, name=Y}] %Left rose 5
\arrow[draw={rgb,255:red,228;green,158;blue,78}, no head, from=TL0, to=Y, line width=1.4pt, to path={
    .. controls +(-0.3,-0.5) and +(-2.4,0.3) .. (C)
    .. controls +(0.3,0) and +(0,0) .. (D)
    .. controls +(0, 0) and +(0:0) .. (E)
    .. controls +(0.8,0.5) and +(0.4, 0) .. node[pos=.46, fill=white, inner sep=2pt] {\phantom{x}} (F)
     .. controls +(-1.1,0.4) and +(4.5,-0.5) .. node[pos=.01, fill=white, inner sep=3pt, yshift=2pt, xshift=2pt] {\phantom{xx}} (\tikztotarget)
    \tikztonodes}]
    \arrow[draw={rgb,255:red,228;green,158;blue,78}, no head, from=G, to=BL2, line width=1.4pt, to path={
    .. controls +(-0.2,-0.12) and +(0.4,0.7) .. (H)
     .. controls +(0,0.2) and +(0,0) .. node[pos=.01, fill=white, inner sep=1pt] {\phantom{x}}(I)
     .. controls +(0:0) and +(0,0.4) .. node[pos=.99, fill=white, inner sep=1pt] {\phantom{x}} node[pos=.01, fill=white, inner sep=1pt] {\phantom{x}}(J)
    .. controls +(-1.4,-1.2) and +(-0.25, 0) .. node[pos=.78, fill=white, inner sep=1pt] {\phantom{x}} node[pos=.61, fill=white, inner sep=1pt] {\phantom{x}}(\tikztotarget)
    \tikztonodes
  }]
      %Left brown
  \arrow[draw={rgb,255:red,214;green,92;blue,214}, no head, from=TL4, to=BL0, line width=1.4pt, looseness=0.8, in=60,out =-65, ""{pos=0.8, coordinate, name=X}, ""{description,pos=0.73, fill=white, inner sep=4pt}, ""{description,pos=0.615, fill=white, inner sep=4pt}, ""{description,pos=0.49, fill=white, inner sep=5pt}, ""{description,pos=0.39, fill=white, inner sep=5pt}, ""{pos=0.258, coordinate, name=Q}]% Left rose 4 %started working on perspective but abandoned: ""{description,pos=0.07, fill=white, inner sep=5pt}, ""{description,pos=0.13, fill=white, inner sep=3pt}, ""{description,pos=0.18, fill=white, inner sep=5pt} ,""{description,pos=0.28, fill=white, inner sep=5pt}
\arrow[draw={rgb,255:red,214;green,92;blue,214}, no head, from=TL5, to=BL1, line width=1.4pt, looseness=0.8, in=55,out =-30, ""{pos=0.47, coordinate, name=F}, ""{description,pos=0.845, fill=white, inner sep=5pt}, ""{description,pos=0.75, fill=white, inner sep=5pt}, ""{description,pos=0.656, fill=white, inner sep=5pt}, ""{description,pos=0.52, fill=white, inner sep=4pt, xshift=-1}] %Left rose 5 %%%%FIX ROSE
\arrow[draw={rgb,255:red,228;green,158;blue,78}, no head, shorten <=-3, shorten >=-7, bend right=5, from=Y, to=G, out=20, looseness=0.2, yshift=-1, line width=1.4pt]
%%%%OLD
% \arrow[draw={rgb,255:red,228;green,158;blue,78}, no head, from=F, to=G, line width=1.4pt, to path={.. controls +(-1.1,0.7) and +(7,-0.5) .. (\tikztotarget)
%     \tikztonodes}]
\arrow[draw={rgb,255:red,92;green,214;blue,92}, no head, from=TL1, to=BL3, line width=1.4pt, bend right=65, ""{pos=0.3, coordinate, name=C}, ""{pos=0.55, coordinate, name=J}, ""{description,pos=0.3, fill=white, inner sep=5pt}] %Left green 1
\arrow[draw={rgb,255:red,92;green,214;blue,92}, no head, from=TL2, to=BL4, line width=1.4pt, bend right=60, ""{pos=0.38, coordinate, name=D}, ""{pos=0.52, coordinate, name=I}, ""{description,pos=0.38, fill=white, inner sep=5pt}] %Left green 2
	\arrow[draw={rgb,255:red,92;green,214;blue,92}, no head, from=TL3, to=BL5, line width=1.4pt, bend right=55, ""{pos=0.42, coordinate, name=H}, ""{description,pos=0.4, fill=white, inner sep=5pt}] % Left green 3
    \arrow[draw={rgb,255:red,228;green,158;blue,78}, no head, from=TL0, to=E, line width=1.4pt, to path={
    .. controls +(-0.3,-0.5) and +(-2.4,0.3) .. (C)
    .. controls +(0.3,0) and +(0,0) .. (D)
    .. controls +(0:0) and +(0:0) .. (\tikztotarget)
    \tikztonodes}]
    \arrow[draw={rgb,255:red,228;green,158;blue,78}, no head, shorten <=-1, shorten >=66, from=E, to=Q, line width=1.4pt]
%%%%%%%%%% other strands%%%%%%%
\arrow[draw={rgb,255:red,214;green,92;blue,214}, no head, from=TL5, to=TR3, line width=1.4pt] %% pink top
	\arrow[draw={rgb,255:red,214;green,92;blue,214}, no head, from=TL4, to=TR3, line width=1.4pt] %pink top
	\arrow[draw={rgb,255:red,92;green,214;blue,92}, no head, from=TL2, to=TR2, line width=1.4pt] %green topfroom2
	\arrow[draw={rgb,255:red,92;green,214;blue,92}, no head, from=TR2, to=TL3, line width=1.4pt]%green top from 
	\arrow[draw={rgb,255:red,92;green,214;blue,92}, no head, from=TL1, to=TR2, line width=1.4pt] %green top
	\arrow[draw={rgb,255:red,228;green,158;blue,78}, no head, from=TL0, to=TR0, line width=1.4pt] %orannge top
	\arrow[draw={rgb,255:red,92;green,214;blue,92}, no head, from=BL5, to=BR2, line width=1.4pt] %green bottom
	\arrow[draw={rgb,255:red,92;green,214;blue,92}, no head, from=BL4, to=BR2, line width=1.4pt] %green bottom
	\arrow[draw={rgb,255:red,92;green,214;blue,92}, no head, from=BL3, to=BR2, line width=1.4pt] %green bottom
    \arrow[draw={rgb,255:red,228;green,158;blue,78}, no head, from=BL2, to=BR1, line width=1.4pt] %orange bottom
    \arrow[draw={rgb,255:red,214;green,92;blue,214}, no head, from=BL1, to=BR0, line width=1.4pt] %pink bottom
    \arrow[draw={rgb,255:red,214;green,92;blue,214}, no head, from=BL0, to=BR0, line width=1.4pt]
%%%%%%%%%%%%%%%%%%%%%%%%RIGHT STRANDS%%%%%%%%%%%%%%%%%%%%%
	\arrow[draw={rgb,255:red,92;green,92;blue,214}, no head, from=BR3, to=TR1, line width=1.4pt, ""{pos=0.22, coordinate, name=B},  looseness=0.8, in=0,out =150]%right blue
%\arrow[draw={rgb,255:red,92;green,92;blue,214}, no head, from=BR3, to=TR1, thick, ""{description,pos=0.62, fill=white, inner sep=2pt}]
    \arrow[draw={rgb,255:red,214;green,92;blue,214}, no head, from=BR0, to=TR3, line width=1.4pt, looseness=0.8, in=-50,out =80,""{pos=0.85, coordinate, name=Z}, ""{pos=0.3, coordinate, name=T}] %%right pink
	\arrow[draw={rgb,255:red,92;green,214;blue,92}, no head, from=BR2, to=TR2, line width=1.4pt, ""{pos=0.4, coordinate, name=A},looseness=0.8, in=-140,out =100 ] %% right green
    %bLUE FIX \arrow[draw={rgb,255:red,92;green,92;blue,214}, no head, from=BR3, to=TR1, line width=1.4pt, shorten <=52pt, shorten >=49pt] %%% blue fix
	\arrow[draw={rgb,255:red,228;green,158;blue,78}, no head, from=BR1, to=A, line width=1.4pt, ""{description,pos=0.12, fill=white, inner sep=5pt}, to path={
    .. controls +(2:0.8) and +(90:0.1) .. node[pos=.9, fill=white, inner sep=4pt] {\phantom{x}} (T)
    .. controls +(-0.5,2) and +(90:0.2) .. node[pos=.9, fill=white, inner sep=2pt] {\phantom{x}}(A)
    }]
    \arrow[draw={rgb,255:red,228;green,158;blue,78}, no head, from=B, to=TR0, line width=1.4pt, ""{description,pos=0.12, fill=white, inner sep=5pt}, to path={
    .. controls +(0.5:2.2cm) and +(1.8, 1.1) .. node[pos=.01, fill=white, inner sep=2pt] {\phantom{x}}(Z)
    .. controls +(0.5,0.3) and +(-90:1.2cm) .. node[pos=.5, fill=white, inner sep=3pt]{\phantom{x}}(\tikztotarget)
    \tikztonodes
  }
  ] %%%right brown
  \arrow[draw={rgb,255:red,92;green,92;blue,214}, no head, from=BR3, to=TR1, line width=1.4pt, looseness=0.8, in=0,out =150,  ""{description,pos=0.945, fill=white, inner sep=3pt}, ""{description,pos=0.39, fill=white, inner sep=6pt}]%right blue fixxxxx deleted ""{description,pos=0.41, fill=blue, inner sep=3pt},
  \arrow[draw={rgb,255:red,214;green,92;blue,214}, no head, from=BR0, to=TR3, line width=1.4pt, looseness=0.8, in=-50,out =80, ""{description,pos=0.415, fill=white, inner sep=4pt}, ""{description,pos=0.373, fill=white, inner sep=4pt}] %%FIX right pink over blue
  \arrow[draw={rgb,255:red,92;green,214;blue,92}, no head, from=BR2, to=TR2, line width=1.4pt, ""{pos=0.4, coordinate, name=A},looseness=0.8, in=-140,out =100, ""{description,pos=0.75, fill=white, inner sep=5pt}] %% Fix right green over pink
    \arrow[draw={rgb,255:red,228;green,158;blue,78}, no head, from=A, to=B, bend left=15, shorten <=5, shorten >=15, line width=1.4pt] %% fix brown crossing with pink
    \arrow[draw={rgb,255:red,228;green,158;blue,78}, no head, from=TR1, to=TR2, yshift=-33.5, xshift=-8, bend right=18, shorten >=15.5, line width=1.4pt] %% fix brown crossing with green
\end{tikzcd}
};  
\end{tikzpicture}
\caption{Depiction of the operations $\phi^\ast$ and $b^\ast$}
\label{figure-braid-operation}
\centering
\end{figure}

The homomorphisms $\pi\colon B_n\to \Sigma_n$ assemble into a functor $\DB\to \DS$ which is the identity on objects and the identity on the subcategory $\Delta$.

\begin{definition}[{\cite{Fie}}]\label{defn: braid homology}
Let $k$ be a field.
    The \textit{braided bar construction} of a $k$-algebra $R$, denoted $\BDB(R)$, is defined as the functor $\DB\to \Vect_k$ obtained by precomposing the symmetric bar construction $\BDS(R)\colon \DS\to \Vect_k$  of \cref{symmetric bar construction} with the natural projection $\DB\to \DS$:
    \[
\begin{tikzcd}
    R  \arrow[r, altstackar=3] & R^{\otimes 2} \ar[out=120, in=60, loop, looseness=6]{}{B_2} \arrow[altstackar=5]{r} & R^{\otimes 3} \ar[out=120, in=60, loop, looseness=6]{}{B_{3}} \arrow[altstackar=7]{r} & \cdots .
\end{tikzcd}
\]
In particular, $\BDB(R)$ and $\BDS(R)$ have the same underlying cosimplicial object. 
The \textit{braid homology $\HB_*(R)$ of a $k$-algebra $R$} is defined as 
\[
    \HB_n(R)=\Tor_n^\DB(\underline{k}, \BDB(R))\,
\]
where $\underline{k}\colon \DB^\op\to \Vect_k$ denotes the constant diagram at $k$.
\end{definition}

\begin{definition}{\cite[Example 4.34]{THG}}\label{TB-def}
   The \textit{topological braid homology $\TB(R)$} of an $\mathbb{E}_1$-ring spectrum $R$ is defined as the homotopy colimit of the functor
    \[
    \begin{tikzcd}
        \BDB(R) \colon \DB \ar{r} & \DS \ar[hook]{r} & \DS_+\simeq \Env(\Assoc) \ar{r}{F_R} & \Sp.
    \end{tikzcd}
    \]
    The above composite of functors gives a homotopy coherent analog in spectra of the braided bar construction of \cref{defn: braid homology}.
    Note that $\TB$ is the special case of topological $\DG$-homology $\mathrm{T}\mathbf{G}^+(R)$ where $\DG=\DB$ of \cref{def: DG-homology} below. 
    The construction is natural in $R$ and defines a functor $\TB\colon \Alg_{\mathbb{E}_1}(\Sp)\to \Sp$.
\end{definition}

\subsection{Braided envelopes}\label{section: braided envelopes}

As discussed in \cref{section: symmetric homology}, the symmetric category $\DS_+$ is equivalent to the symmetric monoidal envelope of the associative operad, and it is the free symmetric monoidal category on the terminal category. 
In this section we prove a similar result for the braided category $\DB_+$, proving that it is equivalent to the $\mathbb{E}_2$-envelope of the associative operad. We also discuss the universal property of the braided monoidal structure on $\DB$.

\begin{definition}\label{augmented-braid}
    Let $\DB_+$ be the augmented braided category: it is the braided category $\DB$ with an additional, initial object $[-1]$. We will sometimes write $\langle 0\rangle$ for $[-1]$, in analogy with our notation $\langle n\rangle$ for the object $[n-1]$. Using the same notation as in \cref{def: augmented simplex cat}, there is a monoidal structure on $\DB_+$, extending that of $\Delta_+$ such that the inclusion $\Delta_+\hookrightarrow \DB_+$ is strictly monoidal. Moreover, the monoidal category $\DB_+$ is braided in the sense that for any $n,m\geq 0$, there is a natural isomorphism $b_{n,m}\colon \langle n\rangle \sqcup \langle m \rangle\to \langle m\rangle \sqcup \langle n\rangle $ satisfying the usual hexagon identities. There are two natural options for the braid $b_{n,m}$.  We make the choice, working from bottom to top, that the strands on the left go over the strands on the right. For example, the braid $b_{2,3}\colon \langle 2\rangle \sqcup \langle 3 \rangle\to \langle 3\rangle \sqcup \langle 2\rangle $ is 
    \[
    \begin{tikzcd}[column sep=small]
	\bullet & \bullet & \bullet & \bullet & \bullet \\
	\\
	\bullet & \bullet & \bullet & \bullet & \bullet\,.
	\arrow[from=1-1, to=3-4, no head]
	\arrow[from=1-2, to=3-5, no head]
	\arrow[from=1-3, to=3-1,crossing over, no head]
	\arrow[from=1-4, to=3-2,crossing over, no head]
	\arrow[from=1-5, to=3-3,crossing over, no head]
\end{tikzcd}
\]
\end{definition}

\begin{observation}\label{braided-observation}
    The braided monoidal structure on $\DB_+$ is universal in the following sense. 
    Given any braided monoidal category $\cC$, the category of strong braided monoidal functors $\DB_+ \to \cC$ and braided monoidal natural transformations is equivalent to the category of (non-braided) monoid objects in $\cC$, see for instance \cite[6.2]{Graves22}. 
    Moreover, there is an adjunction
    \[
    \begin{tikzcd}[column sep=large]
    \mathrm{MonCat}^\mathrm{strong}\ar[bend left]{r}{}\ar[phantom, "\perp" description, xshift=0ex]{r} & \mathrm{BrMonCat}^\mathrm{BrSt},\ar[bend left]{l}{U}
    \end{tikzcd}
\]
where $\mathrm{BrMonCat}^\mathrm{BrSt}$ is the category of small braided monoidal categories with strong braided monoidal functors. Here $U$ is the forgetful functor, and the left adjoint sends $\Delta_+$ to $\DB_+$.
In particular, this shows $\DB_+$ is the free braided monoidal category on the terminal category.
\end{observation}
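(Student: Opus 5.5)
The plan is to reduce the statement to the classical fact that $\DB_+$ is the free strict braided monoidal category on a single monoid object, and to check that the universal property in \cref{braided-observation} holds in the form stated.

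\emph{Monoids transported along $\Delta_+$.} Given a braided monoidal category $\cC$ with monoid $(A,\mu,\eta)$, we first use \cref{rem: universal prop of Delta augmented} to get a strong monoidal functor $F_A\colon\Delta_+\to\cC$ with $F_A\langle n\rangle=A^{\otimes n}$, $F_A(\sigma^i)$ given by $\mu$ on the $i$-th pair of factors, and $F_A(\delta^i)$ inserting $\eta$.

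\emph{Extension to $\DB_+$.} We extend $F_A$ by sending the generator $t^i_n\in B_{n+1}$ to the braiding $c_{A,A}$ acting on factors $i,i+1$. The Artin relations hold by the Yang--Baxter equation for $c$. The mixed relations \eqref{eq:mixes relations} hold as follows:
\begin{itemize}
\item the coface relations follow from naturality of $c$ together with $c_{A,\mathbb{1}}=\id$;
\item the codegeneracy relations (for instance $t^i\sigma^{i+1}=\sigma^i t^{i+1}t^i$) are precisely the hexagon identities combined with naturality of $c$ with respect to $\mu$.
\end{itemize}
Since by the factorization remark every morphism of $\DB$ is uniquely $\phi\circ b$, a presentation by these generators and relations defines $\DB_+$. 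So these checks yield a well-defined functor $\widetilde F_A$.

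\emph{Strong braided monoidal structure.} The monoidal structure on $\widetilde F_A$ is the identity $A^{\otimes n}\otimes A^{\otimes m}=A^{\otimes(n+m)}$ (after strictifying $\cC$). Braidedness amounts to the identity $\widetilde F_A(b_{n,m})=c_{A^{\otimes n},A^{\otimes m}}$. This holds because $b_{n,m}$ is the product of elementary crossings with left strands over right, and $c_{A^{\otimes n},A^{\otimes m}}$ decomposes into $c_{A,A}$'s by the hexagon axioms in the same pattern.

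\emph{Inverse construction.} Conversely, a strong braided monoidal $G\colon\DB_+\to\cC$ restricts along $\Delta_+\hookrightarrow\DB_+$ (strict monoidal) to a monoid $G\langle1\rangle$. The braided condition forces $G(t^i)$ to be the braiding. Hence $G\cong\widetilde F_{G\langle1\rangle}$, naturally in monoidal transformations. This gives the equivalence of categories, citing \cite[6.2]{Graves22} for the coherence bookkeeping.

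\emph{The adjunction.} The adjunction statement follows formally. Apply the above with $\cC$ the target and $A$ the image of the unit monoid; then a strong monoidal $\Delta_+\to U\cC$ is a monoid in $\cC$, which is a braided strong functor $\DB_+\to\cC$. Freeness on $T$ is the composite with \cref{rem: universal prop of Delta augmented}.

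The main obstacle is the mixed relations for codegeneracies. Here one must verify carefully that the block-doubling rule $\phi^*b$ described for $\DB$ matches the hexagon decomposition, including the chosen over/under convention. I would handle this by checking the generating cases $t^i\sigma^j$ for $j\in\{i,i+1\}$ via string diagrams.
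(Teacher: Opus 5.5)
Your treatment of the first claim is correct, and it takes a genuinely different route from the paper. The paper gives no argument of its own for this observation beyond the citation \cite[6.2]{Graves22}. The $\infty$-categorical analogue is proved later, in \cref{thm:envelope}, by computing $\mathrm{Env}_{\mathbb{E}_2}(\mathrm{Assoc})$ directly rather than by generators and relations. Your relation checks are the right ones:
\begin{itemize}
\item the Artin relations are Yang--Baxter plus interchange;
\item the coface mixed relations are naturality of $c$ with respect to $\eta$, together with $c_{A,\mathbbm{1}}=\id=c_{\mathbbm{1},A}$;
\item the codegeneracy mixed relations are naturality with respect to $\mu$ plus the hexagons. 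For example, $c_{A,A}\circ(\id\otimes\mu)=(\mu\otimes\id)\circ(\id\otimes c_{A,A})\circ(c_{A,A}\otimes\id)$ is exactly the image of $t^{i}\sigma^{i+1}=\sigma^{i}t^{i+1}t^{i}$.
\end{itemize}
What your route buys over the citation is an explicit dictionary between the crossed-simplicial-group relations of $\DB$ and the braided monoidal axioms. The final ``in particular'' (freeness on $T$) only needs the $\Delta_+$ case, so it does follow from what you prove.

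The step that does not work as written is ``the adjunction statement follows formally.'' You have proved that $\DB_+$ corepresents $\cC\mapsto\{\text{strong monoidal functors }\Delta_+\to U\cC\}$. In other words, a universal arrow from $\Delta_+$ to $U$ exists and has target $\DB_+$. That identifies the value of the left adjoint at $\Delta_+$ \emph{if} the adjoint exists. Existence, however, requires a universal arrow at every small monoidal category $\mathscr{D}$, i.e.\ a construction of the free braided monoidal category on an arbitrary $\mathscr{D}$. One way is by generators and relations: objects are words in objects of $\mathscr{D}$; morphisms are generated by those of $\mathscr{D}$, the comparison isomorphisms, and formal braidings, modulo naturality, coherence and hexagon relations. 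Nothing in the $\Delta_+$ computation supplies this.

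There is also a level mismatch. $\mathrm{MonCat}^{\mathrm{strong}}$ and $\mathrm{BrMonCat}^{\mathrm{BrSt}}$ are $1$-categories, so an adjunction needs bijections of hom-sets. Your comparison passes through ``monoids in $\cC$'' and strictification, which only yield equivalences of categories. At $\Delta_+$ this is easily repaired: restriction along the strict monoidal inclusion $\Delta_+\hookrightarrow\DB_+$ is a bijection on the nose.
\begin{itemize}
\item Injectivity: $t^i_n=\id_{\langle i\rangle}\sqcup b_{1,1}\sqcup\id_{\langle n-i-1\rangle}$, so braidedness forces $G(t^i_n)$ to be $\id\otimes c\otimes\id$ conjugated by the structure isomorphisms of $G|_{\Delta_+}$.
\item Surjectivity: your relation checks, transported along those isomorphisms, show that every strong monoidal $\Delta_+\to U\cC$ extends.
\end{itemize}
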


Recall that the groupoids $\Sigma_n//B_n$ form a categorical operad in the sense of \cite{wahl}. The operad structure maps are functors
$$\Sigma_n//B_n \times \Sigma_{j_1}//B_{j_1} \times \dots \times \Sigma_{j_n}//B_{j_n}\longrightarrow \Sigma_{j_1+\dots +j_n}//B_{j_1+\dots +j_n},$$
defined as follows. On objects each of these functors is given by the same formula as for $\Assoc$: for $\sigma\in \Sigma_n$ and $\tau_i\in \Sigma_{j_i}$, the operad action map is defined as 
$$(\sigma, \tau_1, \dots, \tau_n)\mapsto \sigma(j_1,\dots,j_n)\cdot (\tau_1\oplus.\dots\tau_n),$$ where $\sigma(j_1,\dots,j_n)$ is the permutation in $\Sigma_{j_1+\dots+ j_n}$ which permutes the $n$ blocks of lengths $j_1, \dots, j_n$ according to $\sigma$. On morphisms, the operad structure map is given by
$$\big( (\sigma, b), (\tau_1, b_1), \dots, (\tau_n, b_n)\big)\longmapsto \omega(b, b_{\sigma^{-1}(1)}, \dots, b_{\sigma^{-1}(n)}), $$ 
where $b\in B_n$ and $b_i\in B_{j_i}$, and $\omega(b, b_{\sigma^{-1}(1)}, \dots, b_{\sigma^{-1}(n)})$ is the braid in $B_{j_1+\dots+j_n}$ obtained by replacing the $i$-th strand of the braid $b$ by the braid $b_{\sigma^{-1}(i)}$.

We now generalize \cref{braided-observation} to the $\infty$-categorical setting by identifying the $\mathbb{E}_2$-envelope of $\mathrm{Assoc}$ with the augmented braided category $\DB_+$. For this identification, we choose a model for $\mathbb{E}_2$ as an operad in simplicial sets where $\mathbb{E}_2(n)$ is the nerve of the translation groupoid $\Sigma_n//B_n$ for the action of $B_n$ on $\Sigma_n$ via the projection map $\pi\colon B_n\to \Sigma_n$. Explicitly, the objects are elements of $\Sigma_n$, and a morphism $\sigma\to \tau$ is a braid $b\in B_n$ such that $\pi(b)\sigma=\tau$. Composition is multiplication in $B_n$.  We will sometimes use the pair $(\sigma,b)$ to denote the morphism $\sigma\to \pi(b)\sigma$ determined by $b$.

\begin{remark}
    We note that for all $i>1$, and all $n\geq 0$ we have $\pi_i(\mathbb{E}_2(n))=0$.  This is immediate from the fact that $\mathbb{E}_2(n)$ is the nerve of a connected groupoid.  In particular, the geometric realization of $\mathbb{E}_2(n)$ is a model for $K(PB_n,1)$, where $PB_n$ is the pure braid group on $n$ strands.
\end{remark}

Associated to the simplicial operad $\mathbb{E}_2$ is the simplicial category of operators $\mathbb{E}_2^{\otimes}$ with objects $\mathbf{n} = \{0,\dots,n\}\in \Fin_*$, and morphism simplicial sets given by
\begin{equation}\label{eq: maps in E_2}
    \Map_{\mathbb{E}_2^\otimes}(\mathbf{m}, \mathbf{n})= \coprod_{\phi\colon \mathbf{m}\to \mathbf{n}}\ \prod_{1\leq i\leq n} \mathbb{E}_2\big(|\phi^{-1}(i)|\big)
\end{equation}
where $\phi\colon \mathbf{m}\to \mathbf{n}$ ranges over maps in $\Fin_\ast$. Note that the underlying category of the simplicial category $\mathbb{E}_2^\otimes$ is precisely the category of operators $\Assoc^\otimes$. Additionally, since each $\mathbb{E}_2(n)$ is the nerve of a groupoid, we have that the mapping simplicial sets in $\mathbb{E}_2^{\otimes}$ are Kan complexes.

We can then take the operadic nerve $N^\otimes(\mathbb{E}_2)$, which, by definition, is the simplicial nerve of the simplicial category $\mathbb{E}_2^\otimes$. Since the mapping simplicial sets of $\mathbb{E}_2^\otimes$ are Kan complexes the operadic nerve $N^\otimes(\mathbb{E}_2)$ is an $\infty$-operad, by \cite[2.1.1.27]{HA}. 

The inclusions $\Sigma_n= \mathbb{E}_2(n)_0\hookrightarrow \mathbb{E}_2(n)$ extend to a map of simplicial operads $\Assoc\to \mathbb{E}_2$, where we view $\Assoc$ as a constant simplicial operad.  Taking categories of operators and simplicial nerves\footnote{Note that we are implicitly using the equivalence of $\infty$-categories between the simplicial nerve of $\Assoc^\otimes$ and the usual nerve of the 1-category $\Assoc^\otimes$.}, this produces a map of $\infty$-operads 
\begin{equation*}
    f^\otimes\colon \Assoc^\otimes \to N^\otimes(\mathbb{E}_2).
\end{equation*} 
We apply \cref{Monoidal envelopes without fibration} to define the $\mathbb{E}_2$-monoidal $\infty$-category $\Env_{\mathbb{E}_2}(\Assoc)$. Explicitly, if we let  $\mathrm{Act}_{\mathbf{1}}(N^\otimes(\mathbb{E}_2))\subset \mathrm{Fun}(\Delta^1, N^\otimes(\mathbb{E}_2))$ be the full simplicial subset spanned by the active edges with target $\mathbf{1}$, then
\[
    \Env_{\mathbb{E}_2}(\Assoc) \simeq \widetilde{\Assoc}^\otimes \times_{N^\otimes(\mathbb{E}_2)} \mathrm{Act}_{\mathbf{1}}(N^\otimes(\mathbb{E}_2)).
\]
Recall that $\widetilde{\Assoc}^{\otimes}$ is a replacement for $\Assoc^{\otimes}$ such that $\widetilde{f}^{\otimes}\colon\widetilde{\Assoc}^{\otimes}\to N^{\otimes}(\mathbb{E}_2)$ is a  fibration of $\infty$ operads.

\begin{theorem}\label{thm:envelope}
There is an $\mathbb{E}_2$-monoidal equivalence of $\infty$-categories  
\[ \mathrm{Env}_{\mathbb{E}_2}(\mathrm{Assoc})\simeq \DB_+ \,.
\] 
\end{theorem}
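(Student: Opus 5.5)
We compute the envelope explicitly from the model in \cref{Monoidal envelopes without fibration} and match it with $\DB_+$. The first step is to avoid the fibrant replacement $\widetilde{\Assoc}^\otimes$. The map $f^\otimes\colon\Assoc^\otimes\to N^\otimes(\mathbb{E}_2)$ is bijective on objects and, on mapping spaces, includes the vertex sets $\Sigma_n=\mathbb{E}_2(n)_0$ into the connected groupoid nerves. Because of this, the homotopy pullback defining $\Env_{\mathbb{E}_2}(\Assoc)$ can be computed by the strict pullback against $\mathrm{Act}_{\mathbf{1}}(N^\otimes(\mathbb{E}_2))$, or equivalently by the simplicial category given as follows.

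Its objects are pairs $(\mathbf{m},\ \alpha\in \mathbb{E}_2(m)_0=\Sigma_m)$, where $\alpha$ is an active map $\mathbf{m}\to\mathbf{1}$ in $\Assoc^\otimes$. Its mapping spaces are the homotopy fibers
\[
\Map\big((\mathbf{m},\alpha),(\mathbf{n},\beta)\big)\simeq \mathrm{fib}_{\alpha}\Big(\Map^{\mathrm{act}}_{\mathbb{E}_2^\otimes}(\mathbf{m},\mathbf{n})\xrightarrow{\ \beta\circ-\ }\mathbb{E}_2(m)\Big),
\]
with the fiber taken over the point $\alpha$, using the description of mapping spaces in an envelope from \cite[2.2.4]{HA}. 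The object $(\mathbf m,\alpha)$ is equivalent to $(\mathbf m,\mathrm{id})$ via the permutation, so the objects are represented by $\langle m\rangle$, $m\ge 0$.

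The second step computes these fibers. By \eqref{eq: maps in E_2}, an active map $\mathbf m\to\mathbf n$ consists of a surjection-like $\phi$ together with, for each $i$, an element of $\mathbb{E}_2(|\phi^{-1}(i)|)$. Composing with $\beta$ gives the operadic composition $\mathbb{E}_2(n)\times\prod_i\mathbb{E}_2(j_i)\to\mathbb{E}_2(m)$. Since all spaces involved are $1$-types and nerves of groupoids, each fiber can be computed as a comma groupoid. An object of that comma groupoid is a tuple $(\phi,\tau_i)$ with a braid $b\in B_m$ satisfying $\pi(b)\cdot\beta(j)(\oplus\tau_i)=\alpha$. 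A morphism is a tuple of braids $b_i\in B_{j_i}$ compatible with $b$ via $\omega$. Each such groupoid is discrete. Inserting the braids $b_i$ into the blocks of $\omega$ acts freely and transitively on the choices of $\tau_i$. Hence the fiber over $(\phi,\alpha)$ is equivalent to the set of braids in $B_m$ modulo block braids. I would identify this set with $\DB(\langle m\rangle,\langle n\rangle)$ via the unique factorization $\phi\circ b$ with $b\in B_m^{\op}$ and $\phi\in\Delta$. The ordering within each block absorbs the $\prod B_{j_i}$, and the correspondence is $\Delta_+$-maps times $B_m$ modulo nothing. In this way the mapping spaces become discrete and equal to $\DB_+(\langle m\rangle,\langle n\rangle)$, with the empty fiber $m=0$ giving the initial object.

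The third step checks that composition matches. In the envelope, composition is operadic substitution $\omega$ on braids. In $\DB_+$, composition is given by the crossed simplicial formulas $(\phi,b)\circ(\psi,c)=(\phi\circ b^*\psi,\ \psi^*b\circ c)$. The operation $\psi^*b$, which cables the strands of $b$ into blocks, is exactly $\omega(b,\mathrm{id},\dots,\mathrm{id})$, and $b^*\psi$ records the induced reordering. So composition agrees on generators, and we get an equivalence of underlying $\infty$-categories $\Env_{\mathbb{E}_2}(\Assoc)\simeq N(\DB_+)$. For the $\mathbb{E}_2$-monoidal structure, both sides are $\mathbb{E}_2$-monoidal and receive $\Assoc$. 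Since $N(\DB_+)$ is a $1$-category, an $\mathbb{E}_2$-monoidal structure on it is a braided monoidal structure. I would use \cref{braided-observation} together with the universal property \cref{universal property of envelope}: the algebra $\iota\colon\Assoc\to\DB_+$ given by $\langle1\rangle$ induces a strong $\mathbb{E}_2$-monoidal functor $\Env_{\mathbb{E}_2}(\Assoc)\to\DB_+$. This functor sends $(\mathbf m,\alpha)$ to $\langle m\rangle$ and is the comparison map computed above, so it is an equivalence. The braiding convention (left strands over right) must be matched with the chosen model of $\mathbb{E}_2(2)=\Sigma_2//B_2$.

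The main obstacle is the second step, namely the careful identification of the homotopy fibers with $\DB_+$ hom-sets. This requires handling the $\widetilde{\Assoc}^\otimes$ replacement rigorously, by arguing that pulling back along the inclusion of vertices into a Kan complex computes the homotopy pullback because $\mathrm{ev}_0$ is a categorical fibration. It also requires verifying freeness of the block-braid action, so that the result is a set rather than a nontrivial groupoid.
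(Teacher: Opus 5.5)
\textbf{There is a genuine gap.} Your second step does not compute the hom-sets of $\Env_{\mathbb{E}_2}(\Assoc)$, and the error is already in the mapping-space formula of your first step. Reducing to a strict pullback against $\mathrm{Act}_{\mathbf 1}(N^\otimes(\mathbb{E}_2))$ is fine. But the formula you then write down is not the mapping space of that pullback. The homotopy fiber
\[
\mathrm{fib}_{\alpha}\big(\Map^{\mathrm{act}}_{\mathbb{E}_2^\otimes}(\mathbf m,\mathbf n)\xrightarrow{\beta\circ-}\mathbb{E}_2(m)\big)
\]
is the mapping space of $\mathrm{Act}_{\mathbf 1}(N^\otimes(\mathbb{E}_2))$ itself, i.e.\ of the underlying category of $\Env_{\mathbb{E}_2}(\mathbb{E}_2)$. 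You never pulled back along the inclusion of the discrete set $\Assoc^\otimes(\mathbf m,\mathbf n)=\Map_{\mathbb{E}_2^\otimes}(\mathbf m,\mathbf n)_0$.

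That is why your comma groupoid acquires the block braids $b_i\in B_{j_i}$ as morphisms. It is also why its set of components is, as you correctly find, braids modulo block braids: in total $\coprod_{\phi}(\prod_i B_{j_i})\backslash B_m$, with $\phi$ ranging over $\Delta_+([m-1],[n-1])$. This is a proper quotient of $\DB_+(\langle m\rangle,\langle n\rangle)=\Delta_+([m-1],[n-1])\times B_m$ as soon as some block has size $\geq 2$. The claim that the ordering within each block absorbs $\prod_i B_{j_i}$ cannot fix this. Orderings account only for $\prod_i\Sigma_{j_i}$, while the infinite group $\prod_i PB_{j_i}$ has still been divided out.

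Concretely, for $m=2$, $n=1$ the map $\beta\circ-$ is the identity of $\mathbb{E}_2(2)$, so your fiber is a point. By contrast, $\DB_+(\langle 2\rangle,\langle 1\rangle)\cong B_2\cong\mathbb{Z}$. What you computed is the $\mathbb{E}_2$-monoidal category corepresenting $\mathbb{E}_2$-algebras, the braided analogue of $\mathrm{Fin}=\Env(\mathrm{Comm})$.

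\textbf{The fix.} The correct computation, which is the paper's, keeps the $\Assoc^\otimes$ factor. Since $\widetilde{\Assoc}^\otimes$ has contractible mapping components indexed by $u\in\Assoc^\otimes(\mathbf m,\mathbf n)$, one gets
\[
\Map\big((\mathbf m,\alpha),(\mathbf n,\alpha')\big)\simeq\coprod_{u\ \mathrm{active}}\mathrm{Path}_{\mathbb{E}_2(m)}(\alpha,\alpha'\circ u),
\]
and there are no morphisms relating different labels $\tau_i$. Each path space is the discrete set of $b\in B_m$ with $\pi(b)\alpha=\alpha'\circ u$, because $\mathbb{E}_2(m)$ is a $1$-type. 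So discreteness comes from $1$-truncation (cf.\ \cref{1-category}). The freeness of a block-braid action, which you single out as the main obstacle, is a non-issue.

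For $\alpha=\alpha'=\id$, consider pairs $(u,b)$ with $u=(\psi,(\tau_i))$ and $\pi(b)=(\tau_1\oplus\cdots\oplus\tau_n)\sigma_\psi$. These correspond bijectively to pairs $(\phi,b)\in\Delta_+([m-1],[n-1])\times B_m$ via $\psi=\phi\circ\pi(b)$. Note that $\psi$ need not be surjective, since empty blocks are allowed.

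\textbf{The rest.} Once this is fixed, your third step agrees with the paper. Deducing the $\mathbb{E}_2$-monoidal structure from the universal property is a reasonable and somewhat cleaner alternative to the paper's direct check of $c_n\otimes c_m=c_{n+m}$ and of the braiding $b_{n,m}$. This uses the strong $\mathbb{E}_2$-monoidal functor classified by the algebra $\langle 1\rangle\in\DB_+$. You would still need to verify that this functor is your explicit comparison on morphisms.
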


\noindent We first prove an auxiliary proposition, showing that $\mathrm{Env}_{\mathbb{E}_2}(\mathrm{Assoc})$ is a 1-category.

\begin{proposition}\label{1-category}
The $\infty$-category $\mathrm{Env}_{\mathbb{E}_2}(\mathrm{Assoc})$ is equivalent to a $1$-category. 
\end{proposition}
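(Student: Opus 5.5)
To show that $\mathrm{Env}_{\mathbb{E}_2}(\mathrm{Assoc})$ is equivalent to a $1$-category, I will compute its mapping spaces and show they are homotopy discrete. An $\infty$-category whose mapping spaces all have contractible components is equivalent to the nerve of its homotopy category, so this suffices.

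First I will replace the pullback defining $\Env_{\mathbb{E}_2}(\Assoc)$ by a model in which mapping spaces are visible. Since $\widetilde{f}^\otimes$ is a fibration and $i^\otimes$ is an equivalence, the pullback $\widetilde{\Assoc}^\otimes\times_{N^\otimes(\mathbb{E}_2)}\mathrm{Act}_{\mathbf 1}(N^\otimes(\mathbb{E}_2))$ is a homotopy pullback. It can therefore be computed with $\Assoc^\otimes$ itself, as the homotopy pullback of the categorical fibration $\mathrm{ev}_0$.

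Concretely, objects are pairs consisting of $\mathbf{m}\in\Assoc^\otimes$ and an active edge $\mathbf{m}\to\mathbf 1$ in $N^\otimes(\mathbb{E}_2)$, i.e.\ a vertex of $\mathbb{E}_2(m)$, i.e.\ a permutation $\sigma\in\Sigma_m$. Using the standard description of mapping spaces in arrow categories, the mapping space from $(\mathbf m,\sigma)$ to $(\mathbf n,\tau)$ is the homotopy fiber product
\[
\Map_{\Assoc^\otimes}(\mathbf m,\mathbf n)\times^h_{\Map_{N^\otimes(\mathbb{E}_2)}(\mathbf m,\mathbf n)\times\Map(\mathbf m,\mathbf 1)} \Map_{N^\otimes(\mathbb{E}_2)}(\mathbf n,\mathbf 1).
\]
Equivalently, it is the space of triples $(\alpha,\beta,h)$, where $\alpha\colon\mathbf m\to\mathbf n$ lies in $\Assoc^\otimes$ over an active map, $\beta\in\mathbb{E}_2(n)$, and $h$ is a path in $\mathbb{E}_2(m)$ from $\sigma$ to $\beta\circ f(\alpha)$. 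This uses \eqref{eq: maps in E_2} and the fact that simplicial mapping spaces compute $\infty$-categorical ones.

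Next I will analyze this space. The set $\Map_{\Assoc^\otimes}(\mathbf m,\mathbf n)$ is discrete, so the space decomposes over $\alpha$. For fixed $\alpha$, the remaining piece is the homotopy fiber of the operadic composition map
\[
c_\alpha\colon \mathbb{E}_2(n)\longrightarrow \mathbb{E}_2(m),\qquad \beta\mapsto \beta\circ(\alpha_1,\dots,\alpha_n),
\]
taken over the point $\sigma$. Both $\mathbb{E}_2(n)\simeq K(PB_n,1)$ and $\mathbb{E}_2(m)\simeq K(PB_m,1)$ are nerves of connected groupoids. The homotopy fiber of a functor of groupoids $G\to H$ is modeled by the comma groupoid, whose $\pi_1$ is the kernel of the map on automorphism groups and whose $\pi_0$ is a coset set. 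So the key step is the following claim: the induced homomorphism
\[
PB_n\longrightarrow PB_m
\]
(or the appropriate version on automorphism groups) is injective. This homomorphism sends a braid $b$ to its cabling, where the $i$-th strand is replaced by $|\alpha^{-1}(i)|$ parallel strands and strands with empty preimage are deleted. This is exactly the operation $\phi^*$ described for $\DB$.

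I expect injectivity to be the main obstacle, because deleting strands is not injective. Active maps in $\Fin_*$ can have empty fibers, and forgetting a strand has nontrivial kernel on pure braids. My plan is to absorb this through the $\Assoc^\otimes$ data. When $\alpha^{-1}(i)=\emptyset$, the $\Assoc$ structure supplies a unit, and those strand components of $\beta$ should be counted as units rather than as fixed points of the fiber.

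Concretely, I will factor $\alpha$ as a surjective active map followed by an inclusion of units, and treat the two parts separately.

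For the surjective part, cabling is injective on braid groups. Doubling strands is split by forgetting all but one strand in each block, and this is compatible with the block structure.

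For the unit part, I need to show that the homotopy fiber over $\sigma$ is still discrete. Here the kernel of forgetting strands, a free group, must be matched against the path component data, so that the fiber becomes the set of braids in $B_{n}$ lifting $\sigma$ modulo deletion. If this direct argument gets messy, I will instead appeal to the universal property. Then $\Env_{\mathbb{E}_2}(\Assoc)$ is the free $\mathbb{E}_2$-monoidal $\infty$-category on an $\mathbb{E}_1$-algebra object. I would compute its mapping spaces via the formula $\Map(x,y)=\coprod\mathbb{E}_2(\dots)\times_{\Sigma}\Assoc(\dots)$ from \cite[2.2.4]{HA}, which gives $\pi_0$ equal to $\DB_+([m],[n])$ with contractible components.

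Once all components are contractible, the $\infty$-category is the nerve of its homotopy category, which proves the statement.
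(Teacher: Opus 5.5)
Your proposal has a genuine gap at the very first computation: the mapping-space formula is wrong, and the ``main obstacle'' you identify comes from that error. A morphism $(\mathbf m,\sigma)\to(\mathbf n,\tau)$ in the underlying $\infty$-category (the fiber over $\mathbf 1$) is a square whose right edge is the identity of $\mathbf 1$ and whose bottom edge is the \emph{fixed} active edge $\tau$. The arrow-category formula is therefore $\Map(\mathbf m,\mathbf n)\times^h_{\Map(\mathbf m,\mathbf 1)}\Map(\mathbf 1,\mathbf 1)$, with $\Map(\mathbf 1,\mathbf 1)=\mathbb{E}_2(1)\simeq *$. You put $\Map(\mathbf n,\mathbf 1)$ in place of $\Map(\mathbf 1,\mathbf 1)$. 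As a result, in your triples $(\alpha,\beta,h)$ the vertex $\beta\in\mathbb{E}_2(n)$ varies, and $\tau$ has disappeared from the answer entirely.

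The space you wrote down is not homotopy discrete. Take the active map $\alpha\colon\mathbf 1\to\mathbf 2$, $1\mapsto 1$. Your component is then the homotopy fiber of $\mathbb{E}_2(2)\to\mathbb{E}_2(1)=*$, i.e.\ $\mathbb{E}_2(2)\simeq S^1$. So no bookkeeping with units can make the injectivity step work, because the free kernel of strand deletion really is present in that space. Your fallback, quoting a formula that ``gives'' $\DB_+([m],[n])$ with contractible components, just asserts the conclusion. That is essentially the content of \cref{thm:envelope}, which is proved using this proposition.

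Once the formula is corrected, no injectivity is needed at all. Since $\Map_{\Assoc^\otimes}(\mathbf m,\mathbf n)$ is discrete, the mapping space is $\coprod_{\alpha}\mathrm{Path}_{\mathbb{E}_2(m)}\bigl(\sigma,\tau\circ f(\alpha)\bigr)$, the coproduct over active $\alpha$. Path spaces in the $1$-type $\mathbb{E}_2(m)\simeq K(PB_m,1)$ are homotopy discrete. In the example above the component is $\mathrm{Path}_{\mathbb{E}_2(1)}(\ast,\ast)\simeq\ast$.

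The paper makes the same truncation argument, phrased through the pullback. The mapping spaces of $\widetilde{\Assoc}^\otimes$ are $0$-truncated. Those of $\mathrm{Act}_{\mathbf 1}(N^\otimes(\mathbb{E}_2))$, being path spaces in a $1$-truncated space, are also $0$-truncated. Those of $N^\otimes(\mathbb{E}_2)$ are $1$-truncated. A homotopy pullback of $0$-truncated spaces over a $1$-truncated space is $0$-truncated.
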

\begin{proof}
We need to show that the mapping spaces in $\mathrm{Env}_{\mathbb{E}_2}(\mathrm{Assoc})$ are 0-truncated \cite[Proposition 2.3.4.12]{HTT}. Since the mapping spaces of the pullback of $\infty$-categories are computed by the homotopy pullbacks of the individual mapping spaces, it will suffice to show that the mapping spaces in $\mathbb{E}_2^{\otimes}$ are $1$-truncated and the mapping spaces in $\widetilde{\Assoc}^{\otimes}$ and $\mathrm{Act}_{\textbf{1}}(N^{\otimes}(\mathbb{E}_2))$ are $0$-truncated.

Since the mapping simplicial sets in the simplicial category $\mathbb{E}_2^\otimes$ are Kan complexes, 
 for any objects $\mathbf{m}$ and $\mathbf{n}$, we have an equivalence of Kan complexes
$$\Map_{\mathbb{E}_2^\otimes}(\mathbf{m}, \mathbf{n})\simeq \Map_{N^\otimes(\mathbb{E}_2)}(\mathbf{m}, \mathbf{n}),$$ which are thus 1-truncated since $\pi_i\mathbb{E}_2(k)=0$ for all $i>1$.

Moreover, since we have an equivalence of $\infty$-categories $\Assoc^\otimes\simeq \widetilde{\Assoc}^\otimes$, the mapping spaces of $\widetilde{\Assoc}^\otimes$ are equivalent to discrete sets, thus 0-truncated. 

Now we analyze the mapping spaces of $\mathrm{Act}_{\mathbf{1}}(N^\otimes(\mathbb{E}_2))$. 
 A 1-simplex  in  $\mathrm{Act}_{\mathbf{1}}(N^\otimes(\mathbb{E}_2))$ is given by $(\beta, H)$, where $\beta\colon \mathbf{n}\to \mathbf{n'}$ is in $\mathbb{E}_2(\mathbf{n}, \mathbf{n'})_0$ and $H$ is a  square $H\colon \Delta^1\times \Delta^1\to N^\otimes(\mathbb{E}_2)$ of the form 
\begin{equation}\label{square-diagram}
\begin{tikzcd}
\mathbf{n} \arrow[r, "\alpha" ] \arrow[d, "\beta"']
  & \mathbf{1} \arrow[d, "\id"] \\
\mathbf{n'} \arrow[r, "\alpha'"']
  & \mathbf{1},
\arrow[Rightarrow, from=1-1, to=2-2, shorten <=8pt, shorten >=8pt, "\, H" ]
\end{tikzcd}
\end{equation}
where $\alpha, \alpha'$ are active maps. We will unravel what this data is in the next proof, and in particular we will see that this forces $\beta$ to also be an active map.

We note that $$\Map_{\mathrm{Act}_{\mathbf{1}}(N^\otimes(\mathbb{E}_2))}(\alpha, \alpha')\simeq \coprod_{\beta \in \mathbb{E}_2(\mathbf{n}, \mathbf{n'})_0,\, \mathrm{active}} \mathrm{Path}(\alpha, \alpha'\circ \beta),$$ where the latter is the path space in $\Map_{N^\otimes(\mathbb{E}_2)}(\mathbf{n}, \mathbf{1})$, which we saw is 1-truncated, so the path space is 0-truncated. Therefore, we can conclude that the mapping spaces in $\mathrm{Act}_{\mathbf{1}}(N^\otimes(\mathbb{E}_2))$ are 0-truncated.
\end{proof}

The proposition implies that the $\infty$-category $\mathrm{Env}_{\mathbb{E}_2}(\mathrm{Assoc})$ is equivalent to the nerve of its homotopy category $h\mathrm{Env}_{\mathbb{E}_2}(\mathrm{Assoc})$~\cite[Proposition~2.3.4.14]{HTT}. 
Now we are ready to prove \cref{thm:envelope}.

\begin{proof}[Proof of \cref{thm:envelope}]
By \cref{1-category}, it suffices to prove that there is an equivalence of braided monoidal 1-categories $h\mathrm{Env}_{\mathbb{E}_2}(\Assoc)\simeq \DB_+$. 

For any $\mathbf{n}$, there is a unique active map $\mathbf{n}\to \mathbf{1}$ in $\Fin_\ast$, which we will denote by $a_n$. Note that the component indexed by $a_n\colon \mathbf{n}\to \mathbf{1}$ in $\Map_{\mathbb{E}_2^\otimes}(\mathbf{n},\mathbf{1})$, under the decomposition in \eqref{eq: maps in E_2}, is exactly $\mathbb{E}_2(n).$ Therefore, a 0-simplex in $\mathrm{Act}_{\mathbf{1}}(N^\otimes(\mathbb{E}_2))$ is of the form $(a_n, \sigma)$, where $\sigma\in \Sigma_n$. Thus, the objects of $h\mathrm{Env}_{\mathbb{E}_2}(\mathrm{Assoc})$ are of the form $(c, \alpha)$, where $c$ is an object of $\widetilde{\Assoc}^\otimes$, and $\alpha=(a_n, \sigma)$ is an active morphism in $N^\otimes(\mathbb{E}_2)$, and $\mathbf{n}=\widetilde{f}^{\otimes}(c)$, for $\widetilde{f}^\otimes\colon \widetilde{\Assoc}^\otimes\to N^\otimes(\mathbb{E}_2)$.

To understand the morphisms in $h\mathrm{Env}_{\mathbb{E}_2}(\mathrm{Assoc})$, we first recall the morphisms in $\mathrm{Act}_{\mathbf{1}}(N^\otimes(\mathbb{E}_2))$. A 1-simplex  in  $\mathrm{Act}_{\mathbf{1}}(N^\otimes(\mathbb{E}_2))$ is given by $(\beta, H)$, where $\beta\colon \mathbf{n}\to \mathbf{n'}$ is in $\Map_{\mathbb{E}_2^\otimes}(\mathbf{n}, \mathbf{n'})_0=\Assoc^\otimes(\mathbf{n}, \mathbf{n'})$ and $H$ is a  square $H\colon \Delta^1\times \Delta^1\to N^\otimes(\mathbb{E}_2)$ as in \eqref{square-diagram}.
Here, $\alpha=(a_n, \sigma)$ with $\sigma\in \Sigma_n$, $\alpha'=(a_{n'}, \sigma')$ with $\sigma'\in \Sigma_{n'}$, $\beta= (\psi, \times \tau_i)$ with $\psi\colon \mathbf{n} \to \mathbf{n'}$ in $\Fin_\ast$, and $\tau_i\in \Sigma_{|\psi^{-1}(i)|}$ for all $i\in \mathbf{n}'$. The homotopy $H$ is an element in simplicial level 1 of $\Map_{\mathbb{E}_2^\otimes}(\mathbf{n}, \mathbf{1})$, which is given by
\[
    \Map_{\mathbb{E}_2^\otimes}(\mathbf{n}, \mathbf{1})_1= \coprod_{\phi\colon \mathbf{n}\to \mathbf{1}} \Sigma_{|\phi^{-1}(1)|}\times B_{|\phi^{-1}(1)|}\,.
\]
Thus $H=(\phi, \gamma, b)$ is an element in this set for which $d_0(H)=\alpha$ and $d_1(H)=\alpha'\circ \beta$. From this we obtain the following facts.
\begin{itemize}
    \item We must have $\phi=a_n=a_{n'}\circ \psi$, which then forces $\psi$ and $\phi$ to also be active maps.
    \item This, in turn, implies $(\gamma, b)\in \Sigma_n\times B_n$ and we get that $\gamma=\sigma$.
    \item  From the formula for composition in $\Assoc^\otimes$, we get  
    \[
        \pi(b)\gamma= \big( \sigma'(|\psi^{-1}(1)|,\dots,|\psi^{-1}(n')|)\cdot (\tau_1\oplus\dots\oplus\tau_{n'}) \big)\sigma_\psi,
\] 
where $\sigma_\psi$ is the permutation in $\Sigma_n$ which converts the natural ordering of $\{1,\dots, n\}$ to its ordering obtained by regarding it as blocks $\psi^{-1}(i)$, so that the block $\psi^{-1}(i)$ precedes the block $\psi^{-1}(i')$ if $i<i'$, and each block is ordered by the natural ordering $\{1,\dots ,n'\}$.
\end{itemize}  

A morphism in $h\mathrm{Env}_{\mathbb{E}_2}(\mathrm{Assoc})$ from $(c, \alpha)$ to $(c', \alpha')$ is represented by a square as in \eqref{square-diagram}, with $\beta=p(u)$ for a map $u\colon c\to c'$ in $\widetilde{\Assoc}^\otimes.$

Recall that we have a categorical equivalence $i\colon \Assoc^\otimes \simeq \widetilde{\Assoc}^\otimes$ over $\Fin_\ast$. This implies every object in $h\mathrm{Env}_{\mathbb{E}_2}(\Assoc)$ is isomorphic to one of the form $c_n\coloneqq(i(\mathbf{n}), (a_n, \id))$. For $n=0$, we let $c_0=(i(\mathbf{0}), (a_0, \ast))$, where $a_0\colon \mathbf{0}\to \mathbf{1}$ is the unique pointed map and $\ast$ is the unique element in $\mathbb{E}_2(0)$. We can define an essentially surjective assignment on objects $\DB_+\to h\Env_{\mathbb{E}_2}(\Assoc)$ by sending the object $[n-1]$ in $\DB_+$ (which we denote by $\langle n \rangle$) to the object $c_n$. First note that the object $c_0$ corresponding to $\langle 0\rangle$ is indeed initial in $h\Env_{\mathbb{E}_2}(\Assoc)$. We now show that this assignment extends on morphisms to a fully faithful functor.

 First, we compute $\Hom_{h\mathrm{Env}_{\mathbb{E}_2}(\Assoc)}(c_n, c_m)$, which by definition is $\pi_0 \Map_{\mathrm{Env}_{\mathbb{E}_2}(\Assoc)}(c_n, c_m)$.  Since  homotopy pullbacks of Kan complexes commute with coproducts, we can identify the mapping space $\Map_{\mathrm{Env}_{\mathbb{E}_2}(\Assoc)}(c_n, c_m)$ with
\begin{equation}\label{eq: mapping space}
    \Map_{\widetilde{\Assoc}^\otimes}(i(\mathbf{n}), i(\mathbf{m})) \times_{\Map_{N^\otimes(\mathbb{E}_2)}(\mathbf{n},\mathbf{m})} \Map_{\mathrm{Act}_{\mathbf{1}}(N^\otimes(\mathbb{E}_2))}(\alpha_n, \alpha_m)
\end{equation}
and we can analyze the two terms in the product separately.

Since $i\colon \Assoc^\otimes \to \widetilde{\Assoc^\otimes}$ is a categorical equivalence of $\infty$-categories,  we have isomorphisms of sets
$$\pi_0 \Map_{\widetilde{\Assoc}^\otimes}(i(\mathbf{n}), i(\mathbf{m}))\cong \pi_0\Map_{\Assoc^\otimes}(\mathbf{n}, \mathbf{m})\cong \Hom_{\Assoc^\otimes}(\mathbf{n}, \mathbf{m}),$$ where the latter is the hom-set in the 1-category $\Assoc^\otimes$. Note that for $\beta\neq \beta'$, the morphisms $i(\beta)$ and $i(\beta')$ are in different components of $\Map_{\widetilde{\Assoc}^\otimes}(i(\mathbf{n}), i(\mathbf{m}))$. Therefore:
$$ \Map_{\widetilde{\Assoc}^\otimes}(i(\mathbf{n}), i(\mathbf{m})) = \coprod_{\beta\in \Assoc^\otimes(\mathbf{n}, \mathbf{m})} \Map_{\widetilde{\Assoc}^\otimes}(i(\mathbf{n}), i(\mathbf{m}))_\beta,$$

\noindent where $\Map_{\widetilde{\Assoc}^\otimes}(i(\mathbf{n}), i(\mathbf{m}))_\beta$ is the connected component containing $i(\beta)$, which is contractible since the mapping spaces in $\widetilde{\Assoc}^\otimes$ are 0-truncated. This allows us to rewrite \eqref{eq: mapping space} as 
\[
    \coprod_{\beta\in \Assoc^\otimes(\mathbf{n}, \mathbf{m})} \Map_{\widetilde{\Assoc}^\otimes}(i(\mathbf{n}), i(\mathbf{m}))_\beta \times_{\Map_{N^\otimes(\mathbb{E}_2)}(\mathbf{n},\mathbf{m})} \Map_{\mathrm{Act}_{\mathbf{1}}(N^\otimes(\mathbb{E}_2))}(\alpha_n, \alpha_m).
\]

We fix a component $\beta$ and consider the Kan fibration 
\[\begin{tikzcd}
\Map_{\widetilde{\Assoc}^\otimes}(i(\mathbf{n}), i(\mathbf{m}))_\beta \times_{\Map_{N^\otimes(\mathbb{E}_2)}(\mathbf{n},\mathbf{m})} \Map_{\mathrm{Act}_{\mathbf{1}}(N^\otimes(\mathbb{E}_2))}(\alpha_n, \alpha_m) \arrow[d] \\ \Map_{\widetilde{\Assoc}^\otimes}(i(\mathbf{n}), i(\mathbf{m}))_\beta. 
\end{tikzcd}\]
Let $F_\beta$ be the fiber over $i(\beta)\in \Map_{\widetilde{\Assoc}^\otimes}(i(\mathbf{n}), i(\mathbf{m}))_\beta $. From the induced long exact sequence of homotopy groups we get an isomorphism
\[
    \pi_0(F_\beta)\cong \pi_0\big(\Map_{\widetilde{\Assoc}^\otimes}(i(\mathbf{n}), i(\mathbf{m}))_\beta \times_{\Map_{N^\otimes(\mathbb{E}_2)}(\mathbf{n},\mathbf{m})} \Map_{\mathrm{Act}_{\mathbf{1}}(N^\otimes(\mathbb{E}_2))}(\alpha_n, \alpha_m)\big),
\] 
since $\Map_{\widetilde{\Assoc}^\otimes}(i(\mathbf{n}), i(\mathbf{m}))_\beta$ is contractible.
The fiber $F_\beta$ is a discrete set consisting of those morphisms of the form $(i(\beta), H)$. \footnote{\parbox[t]{0.95\linewidth}{As an aside, note that in $\pi_0$ of $\Map_{\mathrm{Act}_{\mathbf{1}}(N^\otimes(\mathbb{E}_2))}(\alpha_n, \alpha_m)$, there are homotopy identifications of morphisms $H\simeq H'$: if we consider the Kan fibration
$\Map_{\mathrm{Act}_{\mathbf{1}}(N^\otimes(\mathbb{E}_2))}(\alpha_n, \alpha_m) \to \Map_{N^\otimes(\mathbb{E}_2)}(\mathbf{n},\mathbf{m}),$ note that for $\beta=(\psi, \times \tau_i)$:
$$\textstyle\pi_1(\Map_{N^\otimes(\mathbb{E}_2}(\mathbf{n},\mathbf{m})), \beta)\cong \prod_{1\leq i\leq m} \pi_1(\mathbb{E}_2(|\psi^{-1}(i)|), \tau_i)\cong \prod_{1\leq i\leq m} PB_{|\psi^{-1}(i)|}.$$
 In the fiber over $\beta$ in this Kan fibration, namely squares $H$ with side $\beta$, two such squares would get identified in $\pi_0 \Map_{\mathrm{Act}_{\mathbf{1}}(N^\otimes(\mathbb{E}_2))}(\alpha_n, \alpha_m)$ precisely when they are in the same orbit of the $\pi_1$ action by the product of pure braid groups. 
 However, note that such a path would leave the fiber $F_\beta$ since a path in $F_\beta$ would have to project to the degenerate loop at $\beta$ in $\Map_{N^\otimes(\mathbb{E}_2)}(\mathbf{n},\mathbf{m})$.}} 
 Therefore, we conclude that the elements of $\Hom_{h\mathrm{Env}_{\mathbb{E}_2}(\Assoc)}(c_n, c_m)$ are in correspondence with squares $H$ in $N^\otimes(\mathbb{E}_2)$ as in \eqref{square-diagram} with left-hand side $\beta$, where $\beta\colon\mathbf{n}\to \mathbf{m}$ is an active map in $\Assoc^\otimes$, namely an active map $\psi\colon\mathbf{n}\to \mathbf{m}$ in $\Fin_\ast$ and a tuple of permutations $\tau_i\in \Sigma_{|\psi^{-1}(i)|}$ for all $1\leq i\leq m$. Since the source and targets are $\alpha=(a_n, \id)$  and $\alpha'=(a_m, \id)$,  for $H=(\gamma, b)$ filling the square, we must have $\gamma=\id$ and  $\pi(b)=(\tau_1 \oplus\dots \oplus \tau_m)\sigma_\psi \in \Sigma_n$. 

 Since $\psi\colon\mathbf{n}\to \mathbf{m}$ is an active map, we can identify it with the composite of a permutation in $\Sigma_n$, uniquely determined by the given permutations $\tau_i$ of the preimages, and a map $\phi\colon [n-1]\to [m-1]\in \Delta_+$, with the ordering uniquely determined  by the sizes of the preimages of $\psi$. Since the $\tau_i$ are completely determined by $\pi(b)$ and $\phi$, we can identify a morphism $H\colon c_n\to c_m$ with a pair $(b, \phi)$. 
 Conversely, given a pair $(b, \phi)$, define an active map in $\Fin_\ast$ as $\psi\coloneqq \phi\circ \pi(b)$, and let $\beta$ be the 1-simplex in $\Assoc^\otimes$ defined by $\psi$ and the tuple of permutations $\tau_i\in \Sigma_{|\psi^{-1}(i)|}$ which records the reorderings of preimages. By construction, these assignments are inverses of each other. 
Therefore, we get an isomorphism of hom-sets $$\Hom_{h\mathrm{Env}_{\mathbb{E}_2}(\Assoc)}(c_n, c_m)\cong \Hom_{\Delta_+}([n-1], [m-1])\times B_n=\Hom_{\DB_+}(\langle n\rangle, \langle m\rangle).$$

We show that along these isomorphisms, composition in $h\mathrm{Env}_{\mathbb{E}_2}(\Assoc)$ agrees with composition in $\DB_+$. Recall that in  $\DB_+$, for a morphism $ (\phi, b)\colon \langle n\rangle \to \langle m\rangle$, and a morphism $(\phi', b')\colon \langle m\rangle \to \langle p\rangle$, from the axiom for a crossed simplicial group, it follows that  the composition is given by the rule
$$(\phi', b')\circ (\phi, b)= \big( \phi'\circ b'^\ast\phi, \phi^\ast (b')b \big).$$
Now consider morphisms $H=(b, \beta)\colon c_n\to c_m$ and  $H'=(b', \beta')\colon c_m\to c_p$ defined by the following squares 
\begin{equation*}
\begin{tikzcd}
\mathbf{n} \arrow[r, "\alpha_n" ] \arrow[d, "\beta"']\arrow[dr, phantom, "b"]
  & \mathbf{1} \arrow[d, "\id"] \\
\mathbf{m} \arrow[r, "\alpha_m"] \arrow[d, "\beta'"']\arrow[dr, phantom, "b'"]
  & \mathbf{1} \arrow[d, "\id"] \\
  \mathbf{p} \arrow[r, "\alpha_p"'] 
  & \mathbf{1}
\end{tikzcd}
\end{equation*}
where $\alpha_i$ denotes $(a_i, \id)$, and $\beta, \beta'$ are active maps in $\Assoc^\otimes$, so they are active maps $\psi, \psi'$ in $\Fin_\ast$ together with choices of permutations on the preimages. We saw that  we can write them each uniquely as composites $\psi=\phi\circ\pi(b)$ and $\psi'= \phi'\circ\pi(b')$, where $\phi, \phi'$ are maps in $\Delta$.

The composite $H'\circ H$ is the homotopy class of a square whose lefthand side is $\beta'\circ \beta$, where the composition occurs in $\Assoc^\otimes$, which is isomorphic to $\DS_+$; for an explicit calculation of how the two composition rules agree, see \cite[Theorem~3.17, Example~3.18]{AKGH25}. Explicitly, in $\DS_+$, we get that 
$$(\phi',\pi(b'))\circ ( \phi,\phi(b))= (\ \phi'\circ b'^\ast\phi, \phi^\ast(\pi(b'))\pi(b)).$$
So the $\Delta$ component of the composite is $\phi'\circ b'^\ast\phi$, as desired. The braid $b\in B_n$ represents a path in $\Map_{N^\otimes(\mathbb{E}_2)}(\mathbf{n},\mathbf{1})$ from $\alpha_n$ to $\alpha_m\circ \beta$, while $b'\in B_m$ represents a path in $\Map_{N^\otimes(\mathbb{E}_2)}(\mathbf{m},\mathbf{1})$ from $\alpha_m$ to $\alpha_p\circ \beta'$. In order to compose these, we need to consider the precomposition map 
$$-\circ \beta \colon \Map_{N^\otimes(\mathbb{E}_2)}(\mathbf{m},\mathbf{1}) \to \Map_{N^\otimes(\mathbb{E}_2)}(\mathbf{n},\mathbf{1}),$$ which determines a path $\beta^\ast b'$ in $\Map_{N^\otimes(\mathbb{E}_2)}(\mathbf{m},\mathbf{1})$ from $\alpha_m\circ \beta$ to $\alpha_p\circ \beta'\circ \beta$. By the definition of the simplicial nerve, such a path is a 1-simplex of $\Map_{\mathbb{E}_2^\otimes}(\mathbf{m}, \mathbf{1})$. Explicitly, we apply the degeneracy $s_0$ to the 0-simplex $\beta$ in the mapping simplicial set $\Map_{\mathbb{E}_2^\otimes}(\mathbf{n}, \mathbf{m})$, and we precompose with $s_0 \beta=(\psi, (\tau_1, \id), \dots, (\tau_m, \id))$ at simplicial level 1 of the simplicial category $\mathbb{E}_2^\otimes$ using the composition map 
$$\Map_{\mathbb{E}_2^\otimes}(\mathbf{m}, \mathbf{1})_1\times \Map_{\mathbb{E}_2^\otimes}(\mathbf{n}, \mathbf{m})_1\to \Map_{\mathbb{E}_2^\otimes}(\mathbf{n}, \mathbf{1})_1.$$ From the definition of composition in the category of operators and the operad map definition on simplicial level 1 of $\mathbb{E}_2$, 
$$(a_m, (\id, b'))\circ (\psi, (\tau_1, \id), \dots, (\tau_m, \id))$$
is  $\big(a_m, \ ((\tau_1\oplus \dots\oplus \tau_m)\sigma_\psi, \omega(b', \id, \dots, \id) )\big)$. 
Recall that $\omega(b', \id, \dots, \id)$ is the braid in $B_n$ obtained by replacing the $i$-th strand of $b'$ with $|\psi^{-1}(i)|$ parallel strands, so it is precisely $\phi^\ast (b')$ since $\phi$ was defined as the order preserving factor of $\psi$, so $|\psi^{-1}(i)|= |\phi^{-1}(i)|$.

The braid corresponding to $H'\circ H$ is the one representing the composed path $\beta^\ast b' \circ b$. Since composition in the underlying groupoid is given by multiplication, this is $\phi^\ast(b')b$, as desired.

Lastly, we show that the equivalence of  categories between $h\Env_{\mathbb{E}_2}(\Assoc)$ and $\DB_+$ is braided monoidal. For $\DB_+$, the braided monoidal structure was described in \cref{augmented-braid}. Note that the active map $a_2\colon \mathbf{2}\to \mathbf{1}$ in $\Fin_\ast$ is the map which encodes multiplication. The monoidal structure on $\Env_{\mathbb{E}_2}(\Assoc)$ is by definition given by 
$$\Env_{\mathbb{E}_2}(\Assoc)\times \Env_{\mathbb{E}_2}(\Assoc) \simeq \Env_{\mathbb{E}_2}(\Assoc)^\otimes_{\mathbf{2}}\xrightarrow{\mu_\ast} \Env_{\mathbb{E}_2}(\Assoc),$$
where the second map is the functor determined by the multiplication map $\mu=(a_2, \id)$ in $N^\otimes(\mathbb{E}_2)$ lifted along the cocartesian fibration $\Env_{\mathbb{E}_2}(\Assoc)^\otimes\to N^\otimes(\mathbb{E}_2)$, which is precomposed with the Segal map \cite[Remark 2.1.2.16]{HA}. Let $a_{n,m}\colon \mathbf{n+m}\to \mathbf{2}$ be the active map in $\Fin_\ast$ which sends the first $n$ nonzero inputs to $1$ and the last $m$ inputs to $2$, and let $\alpha_{n,m}=(a_{n,m}, \id_n, \id_m)$ be the morphism in $N^\otimes(\mathbb{E}_2)$, where $\id_n$ and $\id_m$ are the identity permutations in $\Sigma_n$, and $\Sigma_m$, respectively. 

Under the Segal equivalence $\Env_{\mathbb{E}_2}(\Assoc)\times \Env_{\mathbb{E}_2}(\Assoc) \simeq \Env_{\mathbb{E}_2}(\Assoc)^\otimes_{\mathbf{2}}$, the object $(c_n, c_m)$ corresponds to the object $c_{n,m}=(i(\mathbf{n+m}), \alpha_{n,m})$, which maps to $c_{n+m}$ under the functor $\mu_\ast$. Thus the monoidal structure  in $h\Env_{\mathbb{E}_2}(\Assoc)$ is given on objects by $c_n\otimes c_m=c_{n+m}$, which corresponds to the monoidal structure in $\DB_+$ under the identification of $c_n$ with $\mathbf{n}$. 

Now consider the other active edge $\mu_\tau=(a_2, \tau)$ in $N^\otimes(\mathbb{E}_2)$, where $\tau$ is the transposition in $\Sigma_2$: it has the same underlying active map $a_2\colon \mathbf{2}\to \mathbf{1}$ in $\Fin_\ast$, but it swaps the two inputs. Note that $\mu_\tau=\mu\circ t$, where $t$ is the morphism in $N^\otimes(\mathbb{E}_2)$ given by the swap map $\mathbf{2}\to \mathbf{2}$ in $\Fin_\ast$, with the unique labels in $\mathbb{E}(1)$. To compute  $c_m\otimes c_n$ we can either apply $\mu_\ast$ to the object $c_{m,n}\in \Env_{\mathbb{E}_2}(\Assoc)^\otimes_{\mathbf{2}}$ corresponding to $(c_m, c_n)$ under the Segal map, or equivalently we can apply $(\mu_\tau)_\ast\colon\Env_{\mathbb{E}_2}(\Assoc)^\otimes_{\mathbf{2}}\to \Env_{\mathbb{E}_2}(\Assoc)$, the cocartesian lift of the twisted multiplication map $\mu_\tau$, to $c_{n,m}\simeq t_\ast c_{n,m}$. We can thus compute $c_m\otimes c_n$ to be $(i(\mathbf{n}+\mathbf{m}), (a_{n+m}, \tau_{n,m})),$ where $\tau_{n,m}$ is the block permutation in $\Sigma_{n+m}$ swapping the first $n$ and last $m$ inputs. Note that this element is identified with its representative $c_{n+m}$ in the homotopy category $h\Env_{\mathbb{E}_2}(\Assoc)$.

Let $\sigma \in B_2$ be the generator where the first strand goes over the second. Then $(\id, \sigma)$ is a 1-simplex in $\mathbb{E}_2(2)$ from $\id$ to $\pi(\sigma)=\tau$, which gives a 1-simplex in the mapping space $\Map_{N^\otimes(\mathbb{E}_2)}(\mathbf{2}, \mathbf{1})$ from $\mu$ to $\mu_\tau$, which defines the braiding isomorphism in $\Env_{\mathbb{E}_2}(\Assoc)$. Under the operad map in simplicial level 1,
$$\mathbb{E}_2(2)_1\times \mathbb{E}_2(n)_1 \times \mathbb{E}_2(m)_1\to \mathbb{E}_2(n+m)_1, $$
the 1-simplex $(\id, \sigma)$, combined with the degenerate 1-simplices $s_0(\id_n), s_0(\id_m)$ yields  the 1-simplex $(\id, b_{n,m})$, where $b_{n,m}$ is the braid defined in \cref{augmented-braid}, where the first $n$ strands cross over the last $m$ strands. The braiding isomorphism $c_n\otimes c_m \cong c_m\otimes c_n$ is then precisely a square with braid label $b_{n,m}$.
\end{proof}

Recall that for the symmetric category $\DS$, we have the equivalence $\DS_+\simeq \Env(\Assoc)$ with the symmetric monoidal envelope of $\Assoc$, see \cite[Theorem~3.17]{THG}. 
Therefore given any symmetric monoidal $\infty$-category $\cC$, there are natural equivalences
\[
\Fun_{\mathbb{E}_2}^\otimes(\Env_{\mathbb{E}_2}(\Assoc), \cC)\simeq \Alg_{\mathbb{E}_1}(\cC)\simeq \Fun_{\mathbb{E}_\infty}^\otimes (\Env(\Assoc), \cC),
\]
between the $\infty$-categories of strong $\mathbb{E}_2$-monoidal functors $\Env_{\mathbb{E}_2}(\Assoc)\to \cC$, $\mathbb{E}_1$-algebras in $\cC$, and strong symmetric monoidal functors $\Env(\Assoc)\to \cC$.
In particular, picking $\cC$ to be $\Env(\Assoc)$, we obtain a natural strong $\mathbb{E}_2$-monoidal functor $\Env_{\mathbb{E}_2}(\Assoc)\to \Env(\Assoc)$ corresponding to the identity functor on $\Env(\Assoc)$.

\begin{corollary}\label{the map DB to DS}
    The strong $\mathbb{E}_2$-monoidal functor 
    \(\Env_{\mathbb{E}_2}(\Assoc)\to \Env(\Assoc)\) 
    is precisely the nerve of the braided functor $\DB_+\to \DS_+$ induced by the canonical projection $\DB\to \DS$.
\end{corollary}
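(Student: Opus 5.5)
The plan is to use the universal property of \cref{universal property of envelope}, together with the identifications $\Env_{\mathbb{E}_2}(\Assoc)\simeq \DB_+$ of \cref{thm:envelope} and $\Env(\Assoc)\simeq\DS_+$ from \cite[Theorem~3.17]{THG}. The point is that a strong $\mathbb{E}_2$-monoidal functor out of $\Env_{\mathbb{E}_2}(\Assoc)$ is determined, up to contractible choice, by its restriction along $\iota^\otimes\colon\Assoc^\otimes\to\Env_{\mathbb{E}_2}(\Assoc)^\otimes$. The functor in the statement is, by construction, the one whose restriction along $\iota$ is the map of $\infty$-operads $\Assoc^\otimes\to\Env(\Assoc)^\otimes$ corresponding to the identity. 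It therefore suffices to check two things: that the nerve of the projection $\DB_+\to\DS_+$ is a strong $\mathbb{E}_2$-monoidal functor, and that its restriction along $\iota$ is the canonical algebra $\iota_{\mathbb{E}_\infty}\colon\Assoc^\otimes\to\Env(\Assoc)^\otimes$.

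For the first point, I would extend the identity-on-objects projection $\DB\to\DS$ to the initial objects. It is strictly monoidal on objects, since $\langle n\rangle\sqcup\langle m\rangle=\langle n+m\rangle$ in both categories. It is monoidal on morphisms because $\pi\colon B_{n+m}\to\Sigma_{n+m}$ sends juxtaposition of braids to block sum of permutations. It is braided because $\pi(b_{n,m})=\tau_{n,m}$ is the block swap, which is the symmetry on $\DS_+$. A braided strong monoidal functor between braided monoidal $1$-categories has a nerve that is a strong $\mathbb{E}_2$-monoidal functor; here I would use the model of $\mathbb{E}_2$ as $\Sigma_n//B_n$ and the fact that its $2$-truncation controls $1$-categorical data.

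For the second point, I would trace the generating algebra through \cref{thm:envelope}. The object $c_1$ corresponds to $\langle 1\rangle=[0]$. The multiplication $c_2\to c_1$ is the square with $\beta=(a_2,\id)$ and trivial braid, which corresponds to the codegeneracy $\sigma^0\colon[1]\to[0]$. The unit is $[-1]\to[0]$. Under the projection, these go to the same elements of $\DS_+$, which the identification $\DS_+\simeq\Env(\Assoc)$ sends to the structure maps of $\iota_{\mathbb{E}_\infty}$. Higher coherences are automatic: $\DS_+$ is a $1$-category, so the space of $\mathbb{E}_1$-algebra structures on $[0]$ with given unit and multiplication in it is discrete. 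Hence the two algebras agree, and by the equivalence $\iota^*$ the two strong $\mathbb{E}_2$-monoidal functors are equivalent.

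I expect the main obstacle to be the first point: making rigorous that the braided functor of $1$-categories yields a strong $\mathbb{E}_2$-monoidal functor of $\infty$-operads compatible with the specific $\mathbb{E}_2$-structure on $\DB_+$ produced in \cref{thm:envelope}, rather than merely an abstract one. My first attempt would avoid this by working inside the envelope model. I would construct the map $\Env_{\mathbb{E}_2}(\Assoc)^\otimes\to\Env(\Assoc)^\otimes$ directly from the map of operads $N^\otimes(\mathbb{E}_2)\to\mathrm{Comm}^\otimes$ (sending braid labels to permutations, then to a point), which induces a functor on $\mathrm{Act}$ and hence on the pullbacks. This functor is manifestly $\mathbb{E}_2$-monoidal and restricts to the identity on $\Assoc^\otimes$. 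I would then check, using the explicit description of morphisms as pairs $(\phi,b)$ from the proof of \cref{thm:envelope}, that on homotopy categories it sends $(\phi,b)$ to $(\phi,\pi(b))$.
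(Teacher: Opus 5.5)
Your proposal is correct and follows the paper's argument. Both use the universal property of the envelope to reduce the claim to checking that the projection $\DB_+\to\DS_+$ sends the monoid $\langle 1\rangle$ of $\DB_+$, with its unit and multiplication, to the monoid $\langle 1\rangle$ in $\DS_+$ that corresponds to the identity of $\Env(\Assoc)$. Your extra checks — that the projection is strong braided monoidal (which the paper takes for granted), the explicit identification of the structure maps, and the alternative construction through the envelope model — are sound refinements of the same proof, not a different route.
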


\begin{proof}
 As $\Env(\Assoc)$ is the nerve of $\DS_+$, then the identity functor on $\Env(\Assoc)$ is precisely picking an $\mathbb{E}_1$-algebra in $\Env(\Assoc)$, i.e.\ a monoid object in $\DS_+$, which is the monoid $\langle 1 \rangle\in \DS_+$, see notation in \cref{def: augmented simplex cat}.
 Similarly, the braided functor $\DB_+\to \DS_+$ also precisely determines a monoid object in $\DS_+$, which is determined by where $\langle 1 \rangle\in \DB_+$ is sent to, which is $\langle 1 \rangle \in \DS_+$. Therefore, the result follows.
\end{proof}

\begin{corollary}\label{TB as homotopy colimit from monoidal envelope}
Given an $\mathbb{E}_1$-ring spectrum $R$, the topological braid homology $\TB(R)$ can equivalently be obtained as the homotopy colimit of $F_R^{\mathbb{E}_2}\colon \Env_{\mathbb{E}_2}(\Assoc)\to \Sp$, the adjunct of the map of $\infty$-operads $R^\otimes\colon \Assoc^\otimes\to \Sp^\otimes$ defining $R$ as in \cref{strong monoidal functor associated to algebras}.
\end{corollary}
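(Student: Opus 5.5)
The plan is to copy the proof of \cref{lemma:THG}, substituting \cref{thm:envelope} and \cref{the map DB to DS} for the symmetric-monoidal inputs. By definition, $\TB(R)$ is the colimit of the composite $\DB\to\DS\hookrightarrow\DS_+\simeq\Env(\Assoc)\xrightarrow{F_R}\Sp$. The first step is to identify $F_R^{\mathbb{E}_2}$ with the restriction of $F_R$ along $\DB_+\to\DS_+$. Under the equivalence $\Env_{\mathbb{E}_2}(\Assoc)\simeq\DB_+$ of \cref{thm:envelope}, the composite $\Env_{\mathbb{E}_2}(\Assoc)\to\Env(\Assoc)\xrightarrow{F_R}\Sp$ is strong $\mathbb{E}_2$-monoidal, because the first functor is strong $\mathbb{E}_2$-monoidal and $F_R$ is strong symmetric monoidal, hence strong $\mathbb{E}_2$-monoidal after restriction. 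Precomposing with $\iota^\otimes$ gives back the $\mathbb{E}_1$-algebra $R$: the map $\Env_{\mathbb{E}_2}(\Assoc)\to\Env(\Assoc)$ sends the generating monoid $\langle 1\rangle$ to $\langle 1\rangle$ as monoids (\cref{the map DB to DS}), and $F_R$ sends $\langle 1\rangle$ to $R$. The universal property of \cref{universal property of envelope} then gives $F_R^{\mathbb{E}_2}\simeq F_R\circ(\DB_+\to\DS_+)$. Restricted to $\DB\subset\DB_+$, this is exactly the diagram defining $\TB(R)$.

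The second step is to show that $\DB\hookrightarrow\DB_+$ is homotopy cofinal, so that
\[
\operatorname{colim}_{\DB_+}F_R^{\mathbb{E}_2}\simeq\operatorname{colim}_{\DB}F_R^{\mathbb{E}_2}|_{\DB}=\TB(R).
\]
By Quillen's Theorem A (\cite[Theorem 4.1.3.1]{HTT}), it suffices to show that each $\DB\times_{\DB_+}(\DB_+)_{[n]/}$ is weakly contractible.
\begin{itemize}
\item For $n\ge 0$, the category is $\DB_{[n]/}$, since morphisms between non-initial objects are unchanged. It has the initial object $\id_{[n]}$, so it is contractible.
\item For $n=-1$, the category is $\DB$ itself, because $[-1]$ is initial. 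So we need $B\DB\simeq *$.
\end{itemize}

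This last point is the main obstacle. I would argue it as for $\DS$ in \cite{FL91}: every crossed simplicial group $\DG$ satisfies $B\DG\simeq B\Delta\simeq *$. Concretely, $\Delta\hookrightarrow\DG$ admits the factorization $\DG([m],[n])\cong\Delta([m],[n])\times G_m$, and the inclusion should induce a homotopy equivalence on nerves. If that general statement is not directly citable, I would instead apply Quillen's Theorem A to $\Delta\to\DB$. The relevant comma categories should be contractible because each has an initial or terminal object coming from the unique factorization $\phi\circ b$. I would verify this explicitly using the Fiedorowicz--Loday unique factorization property.
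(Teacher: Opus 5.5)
Your overall route matches the paper's. You identify $F_R^{\mathbb{E}_2}$ with $F_R$ restricted along $\DB_+\to\DS_+$; your universal-property argument simply unpacks \cref{the map DB to DS}. You then prove $\DB\hookrightarrow\DB_+$ is cofinal by Quillen's Theorem A, which reduces to showing $B\DB\simeq *$. The gap is in this last step. You correctly flag it as the only nontrivial input, but you justify it with a false statement: crossed simplicial groups do not in general have contractible classifying spaces. For Connes' cyclic category, $B\DC\simeq BS^1\simeq\mathbb{CP}^\infty$. In general, Fiedorowicz--Loday identify $B\DG\simeq B|G_\bullet|$, where $|G_\bullet|$ is the realization of the simplicial set $[n]\mapsto G_n$ with structure maps $\phi^*$.

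Your fallback fails for the same reason. For $\iota\colon\Delta\to\DB$, unique factorization shows that $[n]/\iota$ splits as $\coprod_{b\in B_{n+1}}[n]/\Delta$. That category is homotopy discrete with $\pi_0=B_{n+1}$, not contractible. The categories $\iota/[n]$, for their part, have no initial or terminal object in general. More decisively, the factorization $\phi\circ g$ holds verbatim in $\DC$, so no argument using only it can prove $B\DB\simeq *$ without also ``proving'' $B\DC\simeq *$.

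The missing input is specific to braids (and permutations). The simplicial set $[n]\mapsto B_{n+1}$, whose faces delete strands and whose degeneracies double them, has contractible realization. For example, $b\mapsto 1\oplus b$, adjoining an unbraided strand on the far left, is an extra degeneracy, and it has no analogue in the cyclic case. Hence $B\DB\simeq B|B_{\bullet+1}|\simeq *$. This is what the paper imports from \cite[Example 6 in Section 1.5]{FL91} and the proof of \cite[Proposition 5.8]{FL91}. With that input in place of your justification, the rest of your argument goes through.
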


\begin{proof}
Observe that the following diagram commutes in small categories:
\[
\begin{tikzcd}
    \DB \ar[hook]{r} \ar{d} & \DB_+ \ar{d}\\
    \DS \ar[hook]{r} \ar{r} & \DS_+.
\end{tikzcd}
\]
Therefore, $\TB(R)$ is equivalently the spectrum obtained as the homotopy colimit of
 \[
    \begin{tikzcd}
        \DB \ar[hook]{r} & \DB_+ \ar{r} & \DS_+\simeq \Env(\Assoc) \ar{r}{F_R} & \Sp.
    \end{tikzcd}
    \]
Similar to \cref{lemma:THG}, the inclusion $\DB \hookrightarrow \DB_+$ is cofinal. Indeed, the comma category of the inclusion $\DB\times_{\DB_+}(\DB_+)_{[n]/}$ 
is equivalent to the over category $\DB_{[n]/}$ when $n\ge 0$ and $\DB$ when $n=-1$. When $n\ge 0$, this is contractible with initial object the identity on $[n]$, while for $[n]=[-1]$ the category is equivalent to $\DB$, which is contractible by~\cite[Example 6 in Section 1.5]{FL91} and the proof of \cite[Proposition 5.8]{FL91}. Therefore, the inclusion $\DB\to \DB_+$ is homotopy cofinal \cite[4.1.3.1]{HTT} by Quillen's Theorem A for $\infty$-categories, and the homotopy colimit of a diagram indexed over $\DB_+$ is equivalent to the homotopy colimit of the diagram indexed over its restriction on $\DB$, see \cite[4.1.1.8]{HTT}.

Finally, by \cref{the map DB to DS}, the induced composition
\[
\begin{tikzcd}
    \Env_{\mathbb{E}_2}(\Assoc)\simeq \DB_+ \ar{r} & \DS_+\simeq \Env(\Assoc) \ar{r}{F_R} & \Sp 
\end{tikzcd}
\]
is precisely the strong $\mathbb{E}_2$-monoidal functor $F_R^{\mathbb{E}_2}\colon\Env_{\mathbb{E}_2}(\Assoc)\to \Sp$ of \cref{strong monoidal functor associated to algebras}.
\end{proof}

\begin{corollary}\label{TB-rep-homology}
The topological braid homology $\TB(R)$ of an $\mathbb{E}_1$-ring spectrum has a canonical $\mathbb{E}_2$-ring spectrum structure. Additionally, given an associative algebra $R$ in $\Sp$, there is an equivalence of $\mathbb{E}_2$-ring spectra
\begin{equation*}
\mathrm{TB}(R)\simeq \mathrm{HR}^{\mathbb{E}_2}(R) \,.
\end{equation*}
\end{corollary}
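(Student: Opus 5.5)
The plan is to deduce both claims formally from \cref{TB as homotopy colimit from monoidal envelope} and \cref{representation homology thm}, mirroring how \cref{Einfty-structure} and \cref{topological symmetric homology is topological representation homology} were obtained for $\TS$.

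\emph{Step 1: the $\mathbb{E}_2$-structure.} By \cref{TB as homotopy colimit from monoidal envelope}, $\TB(R)\simeq\operatorname{colim}\big(\Env_{\mathbb{E}_2}(\Assoc)\xrightarrow{F_R^{\mathbb{E}_2}}\Sp\big)$. Here $F_R^{\mathbb{E}_2}$ is strong, hence in particular lax, $\mathbb{E}_2$-monoidal. The source is small; by \cref{thm:envelope} it is the nerve of $\DB_+$. We use the Day convolution $\mathbb{E}_2$-monoidal structure on $\Fun(\Env_{\mathbb{E}_2}(\Assoc),\Sp)$. Lax $\mathbb{E}_2$-monoidal functors are the $\mathbb{E}_2$-algebras for this structure. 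Left Kan extension along $\Env_{\mathbb{E}_2}(\Assoc)\to *$ is $\mathbb{E}_2$-monoidal, since $\Sp$ is compatible with colimits. Hence the colimit of a lax $\mathbb{E}_2$-monoidal functor is an $\mathbb{E}_2$-ring. This is the $\mathbb{E}_2$-analogue of \cite[Example 3.18]{Keenan-Peroux}, and equivalently it is the adjunction $p_!\dashv p^*$ of \cref{free O-algebra and colimits construction} with $\cO=\mathbb{E}_2$. The underlying category of $\mathbb{E}_2$ is contractible, so $p_!$ is just the colimit. Naturality in $R$ follows because $R\mapsto F_R^{\mathbb{E}_2}$ is the functor $\Alg_{\mathbb{E}_1}(\Sp)\simeq\Fun^{\otimes}_{\mathbb{E}_2}(\Env_{\mathbb{E}_2}(\Assoc),\Sp)$ of \cref{universal property of envelope}. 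This gives a lift $\TB\colon\Alg_{\mathbb{E}_1}(\Sp)\to\Alg_{\mathbb{E}_2}(\Sp)$.

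\emph{Step 2: identification with $\HR^{\mathbb{E}_2}$.} Apply \cref{representation homology thm} with $\cO^\otimes=N^\otimes(\mathbb{E}_2)$, $f^\otimes\colon\Assoc^\otimes\to N^\otimes(\mathbb{E}_2)$ the map used to define $\Env_{\mathbb{E}_2}(\Assoc)$, and $\cC=\Sp$. The hypotheses hold because $\mathbb{E}_2$ is $\kappa$-small for $\kappa=\aleph_1$, and $\Sp$ (regarded as $\mathbb{E}_2$-monoidal via its symmetric monoidal structure) is compatible with all small colimits. The theorem gives an equivalence of $\mathbb{E}_2$-algebras
\[ \HR^{\mathbb{E}_2}(R)=f_!R\simeq p_!F_R^{\mathbb{E}_2}\simeq \operatorname{colim} F_R^{\mathbb{E}_2}. \]
Combined with Step 1 and \cref{TB as homotopy colimit from monoidal envelope}, this is the claimed equivalence $\TB(R)\simeq \HR^{\mathbb{E}_2}(R)$.

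\emph{Main obstacle.} The delicate point is ensuring that the two $\mathbb{E}_2$-structures coincide. One is on $\TB(R)$, defined as a colimit over $\DB$ pulled back from $\DS_+$. The other is on $p_!F_R^{\mathbb{E}_2}$. I would handle this by \emph{defining} the structure in Step 1 to be the one on $p_!F_R^{\mathbb{E}_2}$, and transporting it along the equivalence of underlying spectra. That equivalence uses two inputs: the cofinality of $\DB\hookrightarrow\DB_+$, and the identification in \cref{the map DB to DS} of $\DB_+\to\DS_+$ with $\Env_{\mathbb{E}_2}(\Assoc)\to\Env(\Assoc)$. With this choice, the two structures agree by construction. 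One small point to check is that $\Sp$, viewed as $\mathbb{E}_2$-monoidal by restriction along $\mathbb{E}_2\to\mathbb{E}_\infty$, restricts on $\Assoc$ to its usual monoidal structure. This is needed so that $\Alg_{\mathbb{E}_1}(\Sp)$ means the usual $\mathbb{E}_1$-rings, and it holds because the composite $\Assoc\to\mathbb{E}_2\to\mathbb{E}_\infty$ is the standard map.
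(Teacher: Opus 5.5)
Your proposal is correct and is essentially the paper's argument. The $\mathbb{E}_2$-structure comes from writing $\TB(R)$ as the colimit of the strong (hence lax) $\mathbb{E}_2$-monoidal functor $F_R^{\mathbb{E}_2}$ via \cref{TB as homotopy colimit from monoidal envelope} and invoking the Day-convolution result \cite[Proposition~3.16, Example~3.18]{Keenan-Peroux}, and the equivalence with $\HR^{\mathbb{E}_2}(R)$ is then \cref{representation homology thm} with $\cO=\mathbb{E}_2$; your extra checks (smallness of $\mathbb{E}_2$, compatibility of $\Sp$ with colimits, agreement of the two $\mathbb{E}_2$-structures by transport) only make explicit what the paper leaves implicit.
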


\begin{proof}
By \cref{TB as homotopy colimit from monoidal envelope}, $\TB(R)$ is the homotopy colimit of the strong $\mathbb{E}_2$-monoidal functor $F_R^{\mathbb{E}_2}\colon\Env_{\mathbb{E}_2}(\Assoc)\to \Sp$.
By \cite[Proposition~3.16, Example~3.18]{Keenan-Peroux}, the colimit must be an $\mathbb{E}_2$-ring spectrum. This proves the first statement. 
The second statement follows from \cref{representation homology thm} and \cref{TB as homotopy colimit from monoidal envelope}.
\end{proof}

\begin{remark}\label{rem-formula-TB}
By the Bousfield--Kan formula for the colimit in \cref{TB-def}, we have an equivalence 
\[
\TB(R) \simeq \coprod_{n\ge -1}([n]\downarrow \DB)\times R^{\times n+1}/\sim   \,.
\]
As in~\cite[Lemma~6.8]{THG}, we can identify $([n]\downarrow \DB)=EB_{n+1}$\,. Notice also
\begin{align*}
    EB_{n+1}\times_{B_{n+1}}\Sigma_{n+1}& \simeq (*\times_{B_{n+1}}B_{n+1})\times_{B_{n+1}}\Sigma_{n+1}\\*
    & \simeq *\times_{B_{n+1}}(B_{n+1}\times_{B_{n+1}}\Sigma_{n+1})\\*
       & \simeq *\times_{B_{n+1}}\Sigma_{n+1}\\*
    & \simeq \mathbb{E}_2(n+1) \,.
    \end{align*}
Since the $B_{n+1}$-action on $R^{\times n+1}$ is given by restriction along the group homomorphism $B_{n+1}\to \Sigma_{n+1}$ and every map in $\DB$ factors as a composite of an element in the braid group followed by a map in $\Delta$,  
we have an explicit presentation
\[
\mathrm{TB}(R)\simeq \coprod_{n\ge -1}\mathbb{E}_2(n+1)\times_{\Sigma_{n+1}} R^{\times n+1}
\]
of the homotopy colimit defining $\mathrm{TB}(R)$, cf.~\cite[Proposition~6.10]{THG}.
\end{remark}

\subsection{Topological braid homology of loop spaces}\label{TB-loop}
In this section, we compute the topological braid homology of group-like $\mathbb{E}_1$-spaces. Let $J = \Omega\Sigma$ be the monad on based spaces whose algebras are group-like $\mathbb{E}_1$-spaces. Let $\mathrm{Free}_{\mathbb{E}_2}$ denote the left adjoint of the forgetful functor from the $\infty$-category of $\mathbb{E}_2$-algebras in spaces to the $\infty$-category of spaces. 

\begin{proposition}\label{main prop for TP of loop spaces}
Let $M$ be a group-like $\mathbb{E}_1$-space. There is an equivalence of $\mathbb{E}_2$-ring spectra 
\[ \mathrm{TB}(\Sigma_+^{\infty}M)\simeq \Sigma_{+}^{\infty}B(\Free_{\mathbb{E}_2},J,M)\,.
\]
\end{proposition}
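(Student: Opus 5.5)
The plan is to reduce the statement to a purely space-level assertion and then resolve $M$ by free $J$-algebras. The first step is to identify $\TB(\Sigma^\infty_+ M)$ with the free $\mathbb{E}_2$-ring on the $\mathbb{E}_1$-ring $\Sigma^\infty_+M$. This is \cref{TB-rep-homology}: $\TB(\Sigma^\infty_+ M)\simeq \HR^{\mathbb{E}_2}(\Sigma^\infty_+M)=f_!\Sigma^\infty_+M$.

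Next we commute $f_!$ with $\Sigma^\infty_+$. The functor $\Sigma^\infty_+\colon \mathcal{S}\to \Sp$ is strong symmetric monoidal and colimit-preserving, with lax monoidal right adjoint $\Omega^\infty$. Hence on $\mathbb{E}_1$- and $\mathbb{E}_2$-algebras it gives left adjoints commuting with the forgetful functors $f^*$ on both sides. Passing to left adjoints, we obtain $f_!\Sigma^\infty_+ \simeq \Sigma^\infty_+ f_!^{\mathcal{S}}$ as functors $\Alg_{\mathbb{E}_1}(\mathcal{S})\to\Alg_{\mathbb{E}_2}(\Sp)$. Alternatively, we can read this off the explicit formula of \cref{rem-formula-TB}, since $\Sigma^\infty_+$ takes $\coprod_n \mathbb{E}_2(n)\times_{\Sigma_n}M^{n}$ to the corresponding spectrum-level colimit. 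It therefore suffices to produce an equivalence of $\mathbb{E}_2$-spaces
\[
f_!^{\mathcal{S}}M\simeq B(\Free_{\mathbb{E}_2},J,M).
\]

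For this we use the monadic bar resolution. Since $M$ is group-like, it is a $J$-algebra, and the simplicial $J$-algebra $B_\smbullet(J,J,M)$, with $q$-simplices $J^{q+1}M$, has colimit $M$ (it is split-augmented over $M$ on underlying spaces). We regard it in $\Alg_{\mathbb{E}_1}(\mathcal{S})$ through the equivalence between group-like $\mathbb{E}_1$-spaces and $J$-algebras. The functor $f_!$ is a left adjoint, so it preserves geometric realizations, and
\[
f_!M\simeq |f_!(J^{q+1}M)|.
\]
On free objects, composing left adjoints gives $f_!\,\Free_{\mathbb{E}_1}X\simeq \Free_{\mathbb{E}_2}X$. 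Thus the key point is to compare $f_!(JX)$ with $\Free_{\mathbb{E}_2}X$ for $X=J^qM$. The realization is then levelwise $\Free_{\mathbb{E}_2}J^qM$, with faces induced by the right $J$-module structure on $\Free_{\mathbb{E}_2}$ (the $\mathbb{E}_2$-algebra structure map applied after the inclusion $J\to \Free_{\mathbb{E}_2}$), and this is by definition $B(\Free_{\mathbb{E}_2},J,M)$.

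The main obstacle is exactly this comparison. The monad $J=\Omega\Sigma$ is the group-completed free $\mathbb{E}_1$-algebra, not $\Free_{\mathbb{E}_1}$ itself, so $f_!(JX)$ is the free $\mathbb{E}_2$-algebra on a \emph{group-like} free $\mathbb{E}_1$-algebra. This is not literally $\Free_{\mathbb{E}_2}X$ unless $X$ is connected. I would first restrict to based connected $X$, which is where James' theorem $JX\simeq \Free_{\mathbb{E}_1}X$ holds; this covers all simplices $J^qM$ for $q\ge1$.

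The degree-zero term and the connectivity of $M$ itself need care. I would try to handle them by interpreting $B(\Free_{\mathbb{E}_2},J,M)$ with the right $J$-action through group completion, or by working throughout with based (reduced) free functors. It would then remain to check that the resulting face maps match those of $B(\Free_{\mathbb{E}_2},J,M)$. Failing that, one can replace $J$ by the free $\mathbb{E}_1$ monad for the resolution and observe that the two bar constructions agree after realization because $M$ is group-like.
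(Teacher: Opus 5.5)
\textbf{Verdict: there is a genuine gap.} Your strategy is the paper's: resolve $M$ by $B(J,J,M)\simeq M$, commute the free-algebra functor (equivalently $\mathrm{colim}_{\DB}$) with geometric realization, and identify each simplicial level using \cref{TB-rep-homology}. Descending first to spaces via $f_!\Sigma^\infty_+\simeq\Sigma^\infty_+f_!$ is a correct and harmless variation.

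The gap is in the one step with real content, the identification of $f_!(J^{q+1}M)$ together with its face maps, which you leave open. Your proposed reduction does not do what you claim. Since $\pi_0(\Omega\Sigma Y)$ is the free group on $\pi_0Y\setminus\{*\}$, the space $JY$ is connected if and only if $Y$ is. Hence $J^qM$ is connected for $q\ge 1$ only when $M$ already is, so your argument covers only connected $M$. The alternatives you list for the general case are not carried out.

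There are two further inaccuracies in how you set up the comparison.
\begin{itemize}
\item For connected $X$, James's theorem identifies $JX$ with the \emph{reduced} free $\mathbb{E}_1$-algebra $C_1X$, not with $\Free_{\mathbb{E}_1}X=\coprod_n X^n$.
\item There is no map of monads $J\to\Free_{\mathbb{E}_2}$ to induce the right action you describe, since $\mathbb{E}_2$-algebras need not be group-like.
\end{itemize}
Your last fallback, resolving by the free $\mathbb{E}_1$-monad, does correctly give $f_!M\simeq B(\Free_{\mathbb{E}_2},\Free_{\mathbb{E}_1},M)$ for every $\mathbb{E}_1$-space $M$. However, passing from that to the $J$-version is exactly the comparison that is still missing.

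\textbf{The missing idea.} No connectivity hypothesis is needed once $B(\Free_{\mathbb{E}_2},J,M)$ is read as $B(\Omega^2\Sigma^2,J,M)$, which is effectively how the paper reads it in the subsequent theorem. The argument runs as follows.
\begin{itemize}
\item The functor $f_!$ preserves group-like objects, because $\pi_0 f_!A$ is the abelianization of the monoid $\pi_0A$. So $f_!$ restricts to a left adjoint of $f^*$ on group-like algebras.
\item Since $J$ is the monad of the free group-like $\mathbb{E}_1$-algebra on a based space, $f_!(JX)$ is the free group-like $\mathbb{E}_2$-space on $X$, namely $\Omega^2\Sigma^2X$. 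This holds for every based $X$, connected or not.
\item The simplicial object $q\mapsto f_!(J^{q+1}M)$ is by construction $B_\bullet(f_!J,J,M)$. Its right $J$-action is obtained by applying $f_!$ to the counit of the free--forgetful adjunction for group-like $\mathbb{E}_1$-spaces.
\item A triangle-identity check shows that, under $f_!J\simeq\Omega^2\Sigma^2$, this action corresponds to the action through the monad map $J\to\Omega^2\Sigma^2$. So the face maps match automatically, and $f_!M\simeq B(\Omega^2\Sigma^2,J,M)$ for all group-like $M$.
\end{itemize}
The paper's own proof compresses exactly this step into a single appeal to \cref{TB-rep-homology}. Your concern is therefore legitimate, but the proposal needs an argument of this kind to close it.
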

\begin{proof}
Since $M$ is a group-like $\mathbb{E}_1$-space there is an equivalence 
$B(J,J,M)\simeq M$
and consequently an equivalence 
$\TB( \Sigma_+^{\infty}B(J,J,M))\simeq \TB( \Sigma_+^{\infty}M)$.
Commuting colimits with colimits we produce an equivalence
\[ 
    \mathrm{colim}_{\DB}B_{\DB}^{\bullet}(\Sigma_+^{\infty}B(J,J,M))\simeq B(\mathrm{colim}_{\DB}B_{\DB}^{\bullet}J,J,M)
\]
By \cref{TB-rep-homology}, we can identify 
$B(\mathrm{colim}_{\DB}B_{\DB}^{\bullet}J,J,M)\simeq \Sigma_{+}^{\infty}B(\Free_{\mathbb{E}_2},J,M)$. \qedhere
\end{proof}

\begin{theorem}
Let $M$ be a group-like $\mathbb{E}_1$-space. Then there is an equivalence of $\mathbb{E}_2$-ring spectra
\[ \mathrm{TB}(\Sigma_{+}^{\infty}M)\simeq \Sigma_{+}^{\infty}\Omega^2\Sigma BM \,. 
\]
\end{theorem}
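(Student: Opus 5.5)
We will deduce the theorem from \cref{main prop for TP of loop spaces}, which gives an equivalence of $\mathbb{E}_2$-ring spectra $\TB(\Sigma^\infty_+M)\simeq \Sigma^\infty_+B(\Free_{\mathbb{E}_2},J,M)$. It then suffices to identify the $\mathbb{E}_2$-space $B(\Free_{\mathbb{E}_2},J,M)$ with $\Omega^2\Sigma BM$, compatibly with the $\mathbb{E}_2$-structures. The guiding principle is that $B(\Free_{\mathbb{E}_2},J,-)$ computes the free $\mathbb{E}_2$-algebra on an $\mathbb{E}_1$-algebra in spaces, by the same monadic reasoning as in \cref{representation homology thm}. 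For group-like inputs, this free object should be $\Omega^2\Sigma^2$ applied to a delooping.

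The first step is to replace the free $\mathbb{E}_2$-monad by its group completion. The bar construction $B(\Free_{\mathbb{E}_2},J,M)$ is connected precisely when $M$ is connected, and in general it need not be group-like. However, every simplicial level $\Free_{\mathbb{E}_2}J^kM$ carries a group-like monoid of components coming from $M$. Concretely, I would show that the natural map
\[
B(\Free_{\mathbb{E}_2},J,M)\longrightarrow B(\Omega^2\Sigma^2,J,M)
\]
is an equivalence. At each level, $\Free_{\mathbb{E}_2}(J^kM)\to\Omega^2\Sigma^2(J^kM)$ is the group completion map by May's approximation theorem, together with McDuff--Segal for the nonconnected case. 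After realization, the group completion on $\pi_0$ is automatic because the bar construction against the group-like $M$ already inverts the components. Equivalently, one can argue universally: both sides are the left adjoint to the forgetful functor from group-like $\mathbb{E}_2$-spaces to group-like $\mathbb{E}_1$-spaces, evaluated at $M$. A free $\mathbb{E}_2$-algebra on a group-like $\mathbb{E}_1$-algebra is already group-like, since its $\pi_0$ is generated by the image of $\pi_0M$. This is the step I expect to be the main obstacle, and I would try the universal-property argument first.

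The second step uses the recognition principle, which identifies group-like $\mathbb{E}_n$-spaces with pointed $(n-1)$-connected spaces via $\Omega^n$ and $B^n$. In particular, group-like $\mathbb{E}_1$-spaces correspond to pointed connected spaces via $B$. Under these equivalences, the forgetful functor from group-like $\mathbb{E}_2$-spaces to group-like $\mathbb{E}_1$-spaces becomes $\Omega$, viewed as a functor from pointed simply connected spaces to pointed connected spaces. Its left adjoint sends a connected $Y$ to $\Sigma Y$, since
\[
\Map_*(\Sigma Y,Z)\simeq\Map_*(Y,\Omega Z)
\]
and $\Sigma Y$ is simply connected. Hence the free group-like $\mathbb{E}_2$-space on $M$ is $\Omega^2$ of $\Sigma BM$, namely $\Omega^2\Sigma BM$. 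Alternatively, one can compute directly: $B(\Omega^2\Sigma^2,J,M)\simeq\Omega^2 B(\Sigma^2,\Omega\Sigma,M)$, using that $\Omega^2$ commutes with this realization of connected objects. Then
\[
B(\Sigma^2,\Omega\Sigma,M)\simeq\Sigma B(\Sigma,\Omega\Sigma,M)\simeq\Sigma BM,
\]
where the last equivalence is the standard model of the delooping.

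Finally, we apply $\Sigma^\infty_+$ and note that all the identifications above are maps of $\mathbb{E}_2$-spaces. The resulting equivalence is therefore one of $\mathbb{E}_2$-ring spectra, which gives $\TB(\Sigma^\infty_+M)\simeq\Sigma^\infty_+\Omega^2\Sigma BM$.
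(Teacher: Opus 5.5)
Your proof is correct. Its ``compute directly'' branch is exactly the paper's argument. After \cref{main prop for TP of loop spaces}, the paper replaces $\Free_{\mathbb{E}_2}$ by $\Omega^2\Sigma^2$ with only the remark that $M$ is group-like. It then cites May for $B(\Omega^2\Sigma^2,J,M)\simeq\Omega^2\Sigma B(\Sigma,J,M)$ and Fiedorowicz for $B(\Sigma,J,M)\simeq BM$.

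Your preferred route is genuinely different. The free $\mathbb{E}_2$-space $f_!M$ on $M$ has $\pi_0 f_!M\cong(\pi_0M)^{\mathrm{ab}}$, so it is group-like. Hence it is also the free group-like $\mathbb{E}_2$-space on $M$. The recognition principle then reduces the problem to computing the left adjoint of $\Omega\colon\mathscr{S}_*^{\geq 2}\to\mathscr{S}_*^{\geq 1}$, which is $\Sigma$.

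This route buys three things:
\begin{enumerate}
\item It gives an honest justification of the step the paper only asserts.
\item The equivalence is $\mathbb{E}_2$ by construction. The cited bar-construction results are space-level statements, so the paper leaves the multiplicative compatibility implicit.
\item It removes the need for the bar construction altogether. \cref{TB-rep-homology}, together with the fact that $\Sigma^\infty_+$ is a symmetric monoidal left adjoint (and so commutes with $f_!$), gives $\TB(\Sigma^\infty_+M)\simeq\Sigma^\infty_+f_!M$ directly.
\end{enumerate}
That last point makes \cref{TB-rep-homology} the cleaner starting point. If you start from \cref{main prop for TP of loop spaces}, you must separately identify $B(\Free_{\mathbb{E}_2},J,M)$ with $f_!M$. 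The paper's route, in exchange, gives an explicit simplicial model.

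You should discard the levelwise group-completion sketch. $\pi_0\Free_{\mathbb{E}_2}(J^kM)$ is a free commutative monoid, not a group-like one. Also, levelwise group completions need not realize to an equivalence. Your side remark about connectivity is also off: $f_!M$ is connected exactly when $\pi_0M$ is perfect, not when $M$ is connected.
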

\begin{proof}
Explicitly, by \cref{main prop for TP of loop spaces}, we know  
$\mathrm{TB}(\Sigma_{+}^{\infty}M)\simeq \Sigma_{+}^{\infty}B(\Free_{\mathbb{E}_2},J,M)$.
Since $M$ is a group-like $\mathbb{E}_1$-space $\Sigma_{+}^{\infty}B(\Free_{\mathbb{E}_2},J,M)\simeq \Sigma_{+}^{\infty}B(\Omega^2\Sigma^2,J,M)$.
By~\cite[Lemma~9.7]{May72}, there is an equivalence 
\[B(\Omega^2\Sigma^2,J,M)\simeq \Omega^2\Sigma B(\Sigma,J,M)\,.\]
By~\cite[Corollary~7.9]{Fie84}, we know that $B(\Sigma,J,M)\simeq BM$,
finishing the proof.
\end{proof}

\begin{corollary}
Let $X$ be a pointed connected space. There is an equivalence of $\mathbb{E}_2$-ring spectra
\[ 
\mathrm{TB}(\Sigma_{+}^{\infty}\Omega X)\simeq \Sigma_{+}^{\infty}\Omega^2\Sigma X \,. 
\]
\end{corollary}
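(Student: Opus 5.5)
This corollary follows directly from the preceding theorem, taking $M=\Omega X$. Since $X$ is pointed and connected, the loop space $\Omega X$ (for instance, a Moore loop model, which is a strictly associative topological monoid) is a group-like $\mathbb{E}_1$-space. Indeed, $\pi_0\Omega X\cong\pi_1X$ is a group under loop concatenation. So the theorem applies and gives an equivalence of $\mathbb{E}_2$-ring spectra
\[
\mathrm{TB}(\Sigma_+^\infty\Omega X)\simeq\Sigma_+^\infty\Omega^2\Sigma B\Omega X .
\]

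It then remains to identify $B\Omega X$ with $X$. For a pointed connected space $X$, the classical delooping equivalence $B\Omega X\simeq X$ holds; in the $\infty$-categorical language, this is the equivalence between group-like $\mathbb{E}_1$-spaces and pointed connected spaces. Applying the functor $\Sigma_+^\infty\Omega^2\Sigma(-)$ to this equivalence yields an equivalence of spectra $\Sigma_+^\infty\Omega^2\Sigma B\Omega X\simeq\Sigma_+^\infty\Omega^2\Sigma X$.

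The only point needing care is that this last equivalence respects the $\mathbb{E}_2$-ring structures. I would argue as follows. The map $B\Omega X\to X$ is a weak equivalence of pointed spaces. Hence $\Omega^2\Sigma$ of it is a map of $\mathbb{E}_2$-spaces, because $\Omega^2(-)$ of a pointed map is a map of $\mathbb{E}_2$-spaces, and it is a weak equivalence. Applying $\Sigma_+^\infty$, which is symmetric monoidal, then gives an equivalence of $\mathbb{E}_2$-ring spectra. Composing with the equivalence from the theorem finishes the proof.

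I do not expect any real obstacle. The mild point is naturality: one has to check that the $\mathbb{E}_2$-structure on $\Sigma_+^\infty\Omega^2\Sigma BM$ produced in the theorem is the loop-space structure, so that the delooping equivalence transports it correctly. This holds because the theorem's identification runs through $B(\Omega^2\Sigma^2,J,M)\simeq\Omega^2\Sigma B(\Sigma,J,M)$, where the structure is the one coming from $\Omega^2$, and that is natural in maps of pointed spaces.
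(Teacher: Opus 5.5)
Your proposal is correct and matches the paper's argument. The paper gives no separate proof; it states this as an immediate specialization of the preceding theorem to $M=\Omega X$, which is group-like because $X$ is connected, combined with the delooping equivalence $B\Omega X\simeq X$. Your extra remark is a valid justification of a point the paper leaves implicit: the $\mathbb{E}_2$-structure is transported along $\Sigma^\infty_+\Omega^2\Sigma$ applied to that equivalence.
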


\begin{remark}
An algebraic analogue of this result appears in unpublished work of Fiedorowicz~\cite[\S~3]{Fie}. 
\end{remark}

We also note that there is a canonical map from topological braid homology to topological symmetric homology. 
\begin{proposition}\label{prop:TBtoTS}
For any $\mathbb{E}_1$-ring spectrum $R$, there is a canonical map of $\mathbb{E}_2$-ring spectra 
\[ \TB(R)\to \TS(R)\,.\]
\end{proposition}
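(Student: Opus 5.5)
The map will come from the universal properties already established, namely \cref{TB-rep-homology} and \cref{topological symmetric homology is topological representation homology}. These identify $\TB(R)\simeq \HR^{\mathbb{E}_2}(R)$ as the free $\mathbb{E}_2$-ring on $R$ and $\TS(R)\simeq \HR^{\mathbb{E}_\infty}(R)$ as the free $\mathbb{E}_\infty$-ring on $R$. Write $g\colon \mathbb{E}_2^\otimes\to \mathbb{E}_\infty^\otimes$ for the canonical map of $\infty$-operads, so that the map $\Assoc^\otimes\to\mathbb{E}_\infty^\otimes$ factors as $g\circ f$ with $f\colon\Assoc^\otimes\to N^\otimes(\mathbb{E}_2)$.

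The unit of the adjunction $f_{!}\dashv f^{*}$ for $\mathbb{E}_\infty$ gives a map of $\mathbb{E}_1$-rings $R\to \TS(R)$, where $\TS(R)$ is viewed as an $\mathbb{E}_1$-ring by restriction. Since $(g\circ f)^*=f^*g^*$, this is the same as a map $R\to f^*(g^*\TS(R))$ of $\mathbb{E}_1$-rings. Its adjunct under $f_!\dashv f^*$ for $\mathbb{E}_2$ is a map of $\mathbb{E}_2$-rings
\[
\TB(R)\simeq f_!R\longrightarrow g^*\TS(R),
\]
which is the desired map. Naturality in $R$ follows because every step is a unit or counit of an adjunction. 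Equivalently, $\TS(R)\simeq g_!f_!R\simeq g_!\TB(R)$, and the map is the unit $\TB(R)\to g^*g_!\TB(R)$.

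It is also worth checking that this agrees with the map induced on colimits by the functor $\DB_+\to\DS_+$. By \cref{the map DB to DS}, that functor is the strong $\mathbb{E}_2$-monoidal functor $q\colon\Env_{\mathbb{E}_2}(\Assoc)\to\Env(\Assoc)$, and $F_R\circ q\simeq F_R^{\mathbb{E}_2}$ as strong $\mathbb{E}_2$-monoidal functors, as in the proof of \cref{TB as homotopy colimit from monoidal envelope}. Taking colimits gives a map $\operatorname{colim} F_R^{\mathbb{E}_2}\to \operatorname{colim} F_R$. This map is $\mathbb{E}_2$-monoidal because left Kan extension along a strong monoidal functor is compatible with Day convolution, which is the functoriality used in \cite{Keenan-Peroux}.

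The main obstacle is this compatibility. One must show that the colimit map is a map of $\mathbb{E}_2$-algebras and that it matches the adjunction map above. The first approach I would try is to apply the Yoneda argument from the proof of \cref{representation homology thm} to both sides, so that both maps are determined by their restriction along $\iota$, where each restricts to the identity of $R$. For the statement as posed, however, the adjunction construction alone already gives a canonical map of $\mathbb{E}_2$-ring spectra.
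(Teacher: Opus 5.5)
Your proposal is correct, but it takes a different route from the paper. The paper's proof is one line. Since $\BDB(R)$ is by definition $\BDS(R)$ precomposed with the projection $\DB\to\DS$, that functor induces a map of homotopy colimits $\TB(R)\to\TS(R)$. This is the map you treat in your second half as the compatibility check.

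\emph{What the paper's route buys.} Its advantage is that the map is concrete: it lives at the level of the bar constructions and the explicit presentations of \cref{rem-formula-TS,rem-formula-TB}, which is what one wants for computations. Its drawback is that the paper leaves implicit why this is a map of $\mathbb{E}_2$-rings. That requires knowing two things:
\begin{itemize}
\item $\DB_+\to\DS_+$ is strong $\mathbb{E}_2$-monoidal (\cref{the map DB to DS});
\item taking colimits of lax monoidal functors is functorial along strong monoidal functors for Day convolution.
\end{itemize}

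\emph{What your route buys.} Your construction takes the adjunct $f_!R\to g^*(g\circ f)_!R$ of the unit. This gives $\mathbb{E}_2$-multiplicativity and naturality for free. It also yields the structural fact $\TS(R)\simeq g_!\TB(R)$, with your map being the unit of $g_!\dashv g^*$. The cost is that the map is abstract.

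As you observe, the two maps agree once the colimit map is known to be $\mathbb{E}_2$. Both restrict along $R\to\TB(R)$ to the unit $R\to\TS(R)$, since the projection is the identity on $[0]$. The universal property of $f_!$ then identifies them. For the statement as posed, your adjunction argument already suffices.
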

\begin{proof}
The functor $\DB\to \DS$ induces a canonical map $\TB(R)\to \TS(R)$. 
\end{proof}

\section{Computations}\label{sec:computations}

In this section, we introduce computational tools and techniques for topological symmetric and braid homology.  Using these, we give some explicit computations. A number of the results in this section apply to homology theories for general crossed simplicial groups, not just symmetric and braid homology.

We first recall the construction of topological $\Delta \mathbf{G}$-homology of a ring spectrum with twisted $G$-action. We then construct spectral sequences for computing topological twisted symmetric homology, $\TF$, which is associated to any group with parity.  We then prove some general results about topological $\Delta \mathbf{G}$-homology of Thom spectra and we specialize to $\DS$ and $\DB$ to obtain explicit computations.  

\subsection{Topological \texorpdfstring{$\DG$}{Delta G}-homology}

We begin by recalling the definition of groups with parity.

\begin{definition}
    A \emph{group with parity} is a pair $(G,\varphi)$ where $G$ is a group and $\varphi\colon G\to C_2=\{1,-1\}$ is a group homomorphism.
\end{definition}

Given a group with parity, an $\mathbb{E}_1$-\emph{ring spectrum with twisted $G$-action} is a common generalization of a ring spectrum with anti-involution and a ring spectrum with $G$-action. For the definition, see \cite[3.20]{THG}. 

In previous work, the first, fourth and fifth authors introduce the \textit{twisted symmetric category} $\Delta \varphi\wr \Sigma$, a crossed simplicial group associated to a group with parity $(G, \varphi)$ that recovers $\DS$ if $\varphi$ is the trivial group homomorphism $1\to C_2$ \cite[2.9]{THG}.
Moreover, they prove that the nerve of the augmented twisted symmetric category $(\Delta \varphi\wr \Sigma)_+$ is equivalent to the symmetric monoidal envelope of the operad $\Assoc_{\varphi}$ that encodes rings with twisted $G$-actions \cite[3.17]{THG}. In other words, the data of an $\mathbb{E}_1$-algebra $R$ with twisted $G$-action in a symmetric monoidal $\infty$-category $\cC$ is the same data as a symmetric monoidal functor 
\[ F_R\colon  (\Delta \varphi\wr \Sigma)_+\simeq \mathrm{Env}_{}(\mathrm{Assoc}_{\varphi})\to \mathscr{C} \,. \] 

Given any crossed simplicial group $\DG$, there is an associated group homomorphism $\lambda_0\colon G_0\to C_2$, called the \emph{canonical parity} of $\DG$ \cite[2.19]{THG}, and a natural functor $\widetilde{\lambda}\colon \DG\to \Delta \lambda_0\wr \Sigma$ \cite[2.22]{THG}.

\begin{definition}[{\cite[4.28]{THG}}]\label{def: DG-homology}
Let $\Delta \mathbf{G}$ be a crossed simplicial group with canonical parity $\lambda_0 \colon G_0\to C_2$. Let $\mathscr{C}$ be a cocomplete symmetric monoidal $\infty$-category. 
    The \textit{positive topological $\DG$-homology} $\mathrm{H}\mathbf{G}^+(R/\mathscr{C})$ of an $\mathbb{E}_1$-algebra $R$ with twisted $G_0$-action in $\mathscr{C}$ is the homotopy colimit of the functor 
    \[
    \begin{tikzcd}
       \BDG(R) \colon \Delta \mathbf{G} \ar{r} &  (\Delta \lambda_0\wr \Sigma)_+\simeq \Env_{}(\Assoc_{\lambda_0}) \ar{r}{F_R} & \mathscr{C} \,.
    \end{tikzcd}
    \]
When $\mathscr{C}=\mathrm{Sp}$, we write $\mathrm{T}\mathbf{G}^+(R):=\mathrm{H}\mathbf{G}^+(R/\mathrm{Sp})$. When $A$ is an $\mathbb{E}_{\infty}$-ring spectrum and $\mathscr{C}=\mathrm{Mod}_A$, we write $\mathrm{T}\mathbf{G}^+(R/A):=\mathrm{H}\mathbf{G}^+(R/\mathrm{Mod}_A)$. 
The functor $\BDG(R)$ can be pictured diagrammatically as 
\[
\begin{tikzcd}
   R \ar[out=120, in=60, loop, looseness=6]{}{G_0} \arrow[r, altstackar=3] & R^{\otimes_{k} 2} \ar[out=120, in=60, loop, looseness=6]{}{G_1} \arrow[altstackar=5]{r} & R^{\otimes_{k} 3} \ar[out=120, in=60, loop, looseness=6]{}{G_2} \arrow[altstackar=7]{r} & \cdots. 
\end{tikzcd}
\]
    In the case where $\Delta\mathbf{G}=\Delta  \varphi\wr \Sigma$ for some group with parity $\varphi\colon G\to C_2$, we write $\mathrm{T}\mathbf{G}^+(R/A)$ as $\TF(R/A)$ and refer to it as the \textit{topological twisted symmetric homology} for $\mathbb{E}_1$-ring spectra with twisted $G$-action. When $k$ is a discrete commutative ring and $R$ is a $Hk$-algebra, we write  $\mathrm{H}\varphi(R/k)\coloneqq\mathrm{T}\varphi(R/Hk)$ and refer to it as \textit{twisted symmetric homology}. We simply write $\mathrm{H}\varphi(R)$ when $k=\mathbb{Z}$. 

    When $\Delta \mathbf{G}$ is equipped with a duality $\Delta \mathbf{G}^{\textup{op}}\xrightarrow{\cong} \Delta \mathbf{G}$, we define the \textit{topological $\DG$-homology} of an $\mathbb{E}_1$-algebra with twisted $G_0$-action in $\mathscr{C}$, which we denote by $\mathrm{TH}\mathbf{G}(R/\mathscr{C})$, as the homotopy colimit of the composite
     \[
    \begin{tikzcd}
     \Delta^{\textup{op}}\to \Delta \mathbf{G}^{\textup{op}} \to \Delta \mathbf{G} \ar{r} &  (\Delta \lambda_0\wr \Sigma)_+\simeq \Env(\Assoc_{\lambda_0}) \ar{r}{R} & \mathscr{C} \,.
    \end{tikzcd}
    \]
When $\mathscr{C}=\mathrm{Mod}_{A}$, we write $\mathrm{TH}\mathbf{G}(R/A)$ and when $\mathscr{C}=\Sp$ we write  $\mathrm{TH}\mathbf{G}(R)$.
\end{definition}

\begin{remark}
    Topological symmetric homology $\TS$ is universal among all topological $\DG$-homologies $\mathrm{T}\mathbf{G}^+$ with trivial parity: for any crossed simplicial group $\DG$ where $G_0=1$ is the trivial group and any $\mathbb{E}_1$-ring spectrum $R$, there is a natural map of spectra $\mathrm{T}\mathbf{G}^+(R)\to \TS(R)$.
    More generally, given any crossed simplicial group $\DG$ with canonical parity $\lambda_0\colon G_0\to C_2$ and any $\mathbb{E}_1$-ring spectrum $R$ with twisted $G_0$-action, there is a natural map of spectra $\mathrm{T}\mathbf{G}^+(R)\to \mathrm{T}\lambda_0(R)$.
\end{remark}

\subsection{Low-degree computations}
In this subsection, we study the low-degree homotopy groups  $T\varphi_s(R) \coloneqq \pi_sT\varphi(R)$ for connective $\mathbb{E}_1$-ring spectra $R$. First, we construct a spectral sequence that computes $\mathrm{T}\varphi(R)$ using the Whitehead filtration. 
\begin{proposition}\label{SS}
Let $\varphi\colon G\to C_2$ be a group with parity. Let $A$ be an $\mathbb{E}_{\infty}$-ring spectrum and let $R$ be a connective $\mathbb{E}_1$-algebra over $A$ with twisted $G$-action. There is a strongly convergent spectral sequence with ${E}^1$-page 
\[ E_{s,*}^1=\mathrm{colim}_{\Delta \varphi\wr \Sigma}\pi_sB_{\Delta\varphi\wr \Sigma}^{\smbullet}(R/A)\implies \pi_{s+t}\mathrm{T}\varphi(R/A)\,.
\]
\end{proposition}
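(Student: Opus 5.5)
We filter the diagram $B^\smbullet_{\Delta\varphi\wr\Sigma}(R/A)\colon \Delta\varphi\wr\Sigma\to \mathrm{Mod}_A$ by Postnikov (Whitehead) truncations and then commute the colimit with this filtration. Since $R$ is connective, each value $R^{\otimes_A(n+1)}$ is connective, assuming $A$ connective (or working with the connective cover of modules). Postnikov truncation $\tau_{\le s}$ is a functor on $\mathrm{Mod}_A$, so applying it objectwise gives a tower of diagrams
\[
\cdots\to \tau_{\le s}B^\smbullet_{\Delta\varphi\wr\Sigma}(R/A)\to \tau_{\le s-1}B^\smbullet_{\Delta\varphi\wr\Sigma}(R/A)\to\cdots
\]
whose inverse limit is the original diagram. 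Its layers are the diagrams $\Sigma^s H\pi_s B^\smbullet_{\Delta\varphi\wr\Sigma}(R/A)$, that is, suspensions of Eilenberg--MacLane objects attached to the functor $\pi_s B^\smbullet\colon \Delta\varphi\wr\Sigma\to \mathrm{Mod}_{\pi_0A}$. Taking colimits over $\Delta\varphi\wr\Sigma$ of the fiber sequences $\Sigma^sH\pi_sB^\smbullet\to\tau_{\le s}B^\smbullet\to\tau_{\le s-1}B^\smbullet$ gives cofiber sequences of $A$-modules, and hence an exact couple.

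The $E^1$-page is then $\pi_{s+t}\operatorname{colim}_{\Delta\varphi\wr\Sigma}\Sigma^s H\pi_sB^\smbullet$. This is the derived colimit of the discrete functor $\pi_sB^\smbullet$, i.e.\ the functor homology $\Tor_t^{\Delta\varphi\wr\Sigma}(\underline{\mathbb Z},\pi_sB^\smbullet)$, which is how I read the displayed ``$\mathrm{colim}$'' with bigrading $E^1_{s,t}$. So the first step is to identify the homotopy colimit of a discrete diagram with the derived functor homology, as in \cref{remark: HS as homotopy groups}.

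Convergence is the delicate step. The tower $\{\tau_{\le s}\}$ lives naturally as a limit, whereas colimits over $\Delta\varphi\wr\Sigma$ need not commute with inverse limits. To get around this I will use instead the dual, colimit-compatible Whitehead filtration by connective covers, $\tau_{\ge s}B^\smbullet$. Here $B^\smbullet=\tau_{\ge0}B^\smbullet$, the layers are the same Eilenberg--MacLane diagrams, and since colimits commute with colimits, $\operatorname{colim}\tau_{\ge s}B^\smbullet$ is an exhaustive increasing-in-connectivity filtration. The key fact is that $\operatorname{colim}_{\Delta\varphi\wr\Sigma}\tau_{\ge s}B^\smbullet$ is $s$-connective, because a colimit of $s$-connective objects is $s$-connective. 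Consequently, in each total degree only finitely many filtration stages contribute: the contributions come from $s\le n$, with $t\ge0$ since derived colimits of discrete objects sit in nonnegative degrees. The spectral sequence is therefore a half-plane (indeed first-quadrant) spectral sequence with exiting differentials and bounded filtration in each degree, so it converges strongly by Boardman's criterion. The main obstacle I expect is checking that the filtration is Hausdorff and complete; the connectivity estimate on $\operatorname{colim}\tau_{\ge s}$ is exactly what gives the vanishing of $\lim$ and $\lim^1$. Naturality in $R$ follows from functoriality of the truncations.
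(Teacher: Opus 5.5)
Your final argument is the paper's own: filter the diagram levelwise by the Whitehead filtration $\tau_{\ge\bullet}$, take the colimit over $\Delta\varphi\wr\Sigma$ to read off $E^1$ as derived colimits of the discrete diagrams $\pi_s B^\smbullet_{\Delta\varphi\wr\Sigma}(R/A)$, and get strong convergence from the first-quadrant shape with exiting differentials. Your connectivity estimate on $\operatorname{colim}\tau_{\ge s}B^\smbullet$ usefully spells out why completeness survives the colimit, your caveat that $A$ be connective is needed implicitly in the paper's argument as well, and the initial detour through the Postnikov tower $\tau_{\le s}$ is unnecessary but harmless.
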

\begin{proof}
The spectral sequence is associated to the filtered spectrum 
\[
\begin{tikzcd}
   \tau_{\ge \bullet} R \ar[out=120, in=60, loop, looseness=6]{}{G} \arrow[r, altstackar=3] & \tau_{\ge \bullet}(R^{\otimes_{A} 2}) \ar[out=120, in=60, loop, looseness=6]{}{G\wr \Sigma_2} \arrow[altstackar=5]{r} & \tau_{\ge \bullet}(R^{\otimes_{A} 3}) \ar[out=120, in=60, loop, looseness=6]{}{G\wr \Sigma_{3}} \arrow[altstackar=7]{r} & \cdots .
\end{tikzcd}
\]
It is conditionally convergent because the Whitehead filtration is exhaustive and complete. Since $R$ is connective, it is concentrated in the first quadrant with exiting differentials so it is strongly convergent.   
\end{proof}

\begin{remark}\label{map of graded commutative rings}
Let $S$ be an $\mathbb{E}_{\infty}$-ring spectrum, let $A$ be an $\mathbb{E}_{\infty}$-$S$-algebra, and let $R$ be an $\mathbb{E}_{1}$-$A$-algebra with twisted $G$-action for some group with parity $\varphi \colon G\to C_2$. The induced map $\mathrm{T}\varphi (R/S)\to \mathrm{T}\varphi (R/A)$
is a map of $\mathbb{E}_{\infty}$-ring spectra, by the same argument as that of~\cref{Einfty-structure}.
\end{remark}

\begin{corollary}\label{cor:low-degrees}
Let $\varphi\colon G\to C_2$ be a group with parity. Let $R$ be a discrete ring with twisted $G$-action. There is a map 
$\mathrm{T}\varphi_m(HR)\to \mathrm{H}\varphi_m(R)$
of graded commutative rings that induces isomorphisms in degrees $0\le m\le 2$. In particular, there is a map of graded commutative rings 
\[
\TS_m(HR)\to \HS_m(R)
\]
that induces isomorphisms in degrees $0\le m\le 2$.
\end{corollary}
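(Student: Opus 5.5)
The map is the one induced by the change of base, and the degree bound comes from the spectral sequence of \cref{SS}. Apply \cref{map of graded commutative rings} with $S=\mathbb{S}$ and $A=H\mathbb{Z}$, noting that $HR$ is an $\mathbb{E}_1$-$H\mathbb{Z}$-algebra with twisted $G$-action. This gives a map of $\mathbb{E}_\infty$-ring spectra
\[
\mathrm{T}\varphi(HR)\to \mathrm{T}\varphi(HR/H\mathbb{Z})=\mathrm{H}\varphi(R),
\]
and on homotopy groups a map of graded commutative rings $\mathrm{T}\varphi_m(HR)\to \mathrm{H}\varphi_m(R)$. It remains to show this map is an isomorphism for $0\le m\le 2$. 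I will do this by comparing the spectral sequences of \cref{SS} for $A=\mathbb{S}$ and $A=H\mathbb{Z}$; the comparison is natural in $A$.

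The starting point is the comparison of the bar constructions levelwise. On $\pi_s$, the map $HR^{\otimes n+1}\to HR^{\otimes_{H\mathbb{Z}} n+1}$ is an isomorphism for $s=0$, since both sides give $R^{\otimes n+1}$. For $s=1$ both sides vanish: $\pi_1(H\mathbb{Z}\otimes H\mathbb{Z})=0$, and the relative term is $\mathrm{Tor}_1$, which contributes at $s=1$ in the relative case. More precisely, the spectral sequence should be filtered by the Postnikov filtration on $R^{\otimes n+1}$ itself, so that the $E^1$ terms are (derived) colimits of $\pi_*$ of the diagrams. The cleanest approach is to compare cofibers. The fiber of $HR^{\otimes_{\mathbb{S}} n+1}\to HR^{\otimes_{H\mathbb{Z}}n+1}$ is built from $\pi_*(H\mathbb{Z}\wedge H\mathbb{Z})$ in positive degrees, whose lowest nonzero group is $\pi_2\cong\mathbb{Z}/2$ (the first dual Steenrod class in the integral dual Steenrod algebra sits in degree $2$, because $\pi_1(H\mathbb{Z}\wedge H\mathbb{Z})=0$). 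The fiber is therefore $1$-connected, and the map is $2$-connected in every simplicial degree: an isomorphism on $\pi_s$ for $s\le 1$ and a surjection on $\pi_2$.

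The main obstacle is upgrading this levelwise connectivity to the degree range $0\le m\le 2$, since levelwise $2$-connectivity naively only gives isomorphisms in degrees $m\le 1$. I would use the formula in \cref{rem-formula-TS}, or its twisted analogue from \cite[Proposition~6.10]{THG}, which writes the homotopy colimit as $\coprod_n E(G\wr\Sigma_{n+1})_+\wedge_{G\wr\Sigma_{n+1}}(-)^{\otimes n+1}$ glued along $\Delta$. The $n$-th term in the fiber then contributes connectivity $\ge 1$ plus the cell dimension. The difficulty is that the $\mathbb{Z}/2$ in degree $2$ in arity $n+1\ge1$ could survive to degree $2$ of the colimit.

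To handle this I would check directly that the degree-$2$ class maps to zero in the colimit. The unit cofaces $\delta^i$ insert $1_R$; the $\mathbb{Z}/2$ classes in arity $1$ come from $HR\wedge_{\mathbb{S}}$-terms only in arity $\ge 2$. Arity $0$ is $\mathbb{S}$ versus $H\mathbb{Z}$, which is an isomorphism in degrees $\le 2$ because $\pi_1\mathbb{S}=\mathbb{Z}/2$ — this needs care. If this route fails, the fallback is to take the statement for $\mathrm{T}\varphi$ with the bound $0\le m\le 1$ and compute $m=2$ by hand from the $E^2$-page in total degree $2$, comparing the terms $\mathrm{colim}^{2}\pi_0$, $\mathrm{colim}^1\pi_1$ and $\mathrm{colim}^0\pi_2$ on both sides. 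Here I expect $\pi_1(HR^{\otimes n+1})=0$ and the $\pi_2$ term to die under the symmetric-group colimit, since the $\Sigma_2$-action on $\pi_2(H\mathbb{Z}\wedge H\mathbb{Z})$ combined with the face maps kills it. The $\TS$ statement is the special case of trivial $\varphi$.
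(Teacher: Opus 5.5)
There is a genuine gap at degree $2$. Your construction of the map (via \cref{map of graded commutative rings} with $S=\mathbb{S}$, $A=H\mathbb{Z}$) and your levelwise estimate are fine. The fiber of $H\mathbb{Z}\otimes H\mathbb{Z}\to H\mathbb{Z}$ is $1$-connected, so each $HR^{\otimes n+1}\to HR^{\otimes_{H\mathbb{Z}}n+1}$ is an isomorphism on $\pi_0,\pi_1$ and surjective on $\pi_2$. That only gives isomorphisms on $\mathrm{T}\varphi_m$ for $m\le 1$. The degree-$2$ case, which you correctly identify as the crux, is never proved. Your fallback of comparing the total-degree-$2$ terms of the spectral sequence of \cref{SS} has the right shape; it is what the paper does. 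But the reasons you give do not hold up:
\begin{itemize}
\item $\pi_1(HR^{\otimes n+1})$ need not vanish: for $R=\mathbb{F}_2$, $\pi_1(H\mathbb{F}_2\otimes H\mathbb{F}_2)\cong\mathbb{F}_2$. Vanishing is not needed anyway, since the $\pi_1$-diagrams are isomorphic.
\item $\mathbb{S}\to H\mathbb{Z}$ is not an isomorphism in degrees $\le 2$, since $\pi_1\mathbb{S}\neq 0$. Arity zero is not part of the indexing category $\Delta\varphi\wr\Sigma$ in any case.
\item The expectation that the $\Sigma_2$-action together with face maps kills the $\mathbb{Z}/2$ in $\pi_2(H\mathbb{Z}\otimes H\mathbb{Z})$ is not an argument.
\end{itemize}

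The missing idea is that every object $[n]$ of $\Delta\varphi\wr\Sigma$ maps to $[0]$, already inside $\Delta$ where $[0]$ is terminal. Hence the ordinary colimit of any diagram of abelian groups over $\Delta\varphi\wr\Sigma$ is a quotient of its value at $[0]$. At $[0]$ both bar constructions are $HR$ and the comparison map is the identity.

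Concretely, let $C$ be the levelwise cofiber of $B^\smbullet_{\Delta\varphi\wr\Sigma}(HR/\mathbb{S})\to B^\smbullet_{\Delta\varphi\wr\Sigma}(HR/H\mathbb{Z})$. Then $C$ is levelwise $3$-connective with $C([0])\simeq 0$, so $\mathrm{colim}\,C$ is $3$-connective. Moreover $\pi_3(\mathrm{colim}\,C)\cong\mathrm{colim}\,\pi_3C$ is a quotient of $\pi_3C([0])=0$. So $\mathrm{colim}\,C$ is $4$-connective, and the map is an isomorphism on $\pi_m$ for $m\le 2$.

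In the language of \cref{SS}, this says $E^1_{s,0}=\mathrm{colim}\,\pi_sB^\smbullet$ vanishes for $s\ge 1$ on both sides. The $\pi_2$-discrepancy therefore first appears in $E^1_{2,1}$, in total degree $3$, and that term supports no nonzero differentials, since they would land at negative $t$. The paper encodes exactly this in its list of truncations: $\pi_2$ enters total degree $2$ only through the arity-one term $\tau_{\le 2}(R)$, where the two constructions coincide.
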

\begin{proof}
We consider the spectral sequences of \cref{SS} for $HR$ with $A=\mathbb{S}$ and $A= H\mathbb{Z}$, respectively. The unit map $\mathbb{S}\to H\mathbb{Z}$ induces a map of spectral sequences.  In total degree $0\leq m\leq 2$ there are $G\wr \Sigma_n$-equivariant equivalences 
\begin{align*}
\tau_{\le 0}(R\otimes R\otimes R)& \simeq \tau_{\le 0}(R\otimes_{H\mathbb{Z}} R\otimes_{H\mathbb{Z}} R) \\ 
\tau_{\le 1}(R\otimes R)& \simeq \tau_{\le 1}(R\otimes_{\mathbb{Z}} R) \\ 
\tau_{\le 2}(R)& \simeq \tau_{\le 2}(R) \,. 
\end{align*}
 Thus, in total degrees $0\leq m\leq 2$, the map of spectral sequences is an isomorphism and the claim follows.
\end{proof}

\begin{corollary}\label{cor:pi0-of-TS}
Let $\varphi \colon G\to C_2$ be a group with parity. 
If a map $R\to S$ of $\mathbb{E}_1$-ring spectra with twisted $G$-action is $n$-connective, then the induced map 
\[ \mathrm{T}\varphi _s(R)\to \mathrm{T}\varphi _s(S)\]
is $n$-connective. In particular, if $R$ is a connective $\mathbb{E}_1$-ring spectrum with twisted $G$-action then the map 
\[ 
\TS_s(R)\to \HS_s(\pi_0R)\,
\]
is an isomorphism when $s=0$ and surjective when $s=1$. 
\end{corollary}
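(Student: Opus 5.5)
We argue levelwise: first we show that $\mathrm{T}\varphi$ is built from the bar construction by a colimit that preserves connectivity, and then we compare each level of the bar construction. By definition, $\mathrm{T}\varphi(R)$ is the colimit over $\Delta\varphi\wr\Sigma$ of the functor $B^{\smbullet}_{\Delta\varphi\wr\Sigma}(R)$, whose value on $[n]$ is $R^{\otimes n+1}$ with its $G\wr\Sigma_{n+1}$-action. A map $f\colon R\to S$ of $\mathbb{E}_1$-ring spectra with twisted $G$-action induces a natural transformation $B^{\smbullet}_{\Delta\varphi\wr\Sigma}(R)\to B^{\smbullet}_{\Delta\varphi\wr\Sigma}(S)$, and $\mathrm{T}\varphi(f)$ is the induced map on colimits.

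\textbf{Step 1: each level is $n$-connective.} We show that $f^{\otimes k}\colon R^{\otimes k}\to S^{\otimes k}$ is $n$-connective for every $k$. Write it as a composite of maps $\mathrm{id}^{\otimes i}\otimes f\otimes \mathrm{id}^{\otimes j}$. The cofiber of each is $R^{\otimes i}\otimes \mathrm{cof}(f)\otimes S^{\otimes j}$. Since $R$ and $S$ are connective (a hypothesis I would make explicit), this cofiber is $n$-connective, and so each factor is $n$-connective.

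\textbf{Step 2: the colimit is $n$-connective.} The cofiber of the induced map on colimits is the colimit of the levelwise cofibers. Colimits in $\Sp$ preserve $n$-connective objects, since connective spectra are closed under colimits, so the cofiber of $\mathrm{T}\varphi(f)$ is $n$-connective. Hence the map $\mathrm{T}\varphi_s(R)\to\mathrm{T}\varphi_s(S)$ is an isomorphism for $s<n$ and a surjection for $s=n$, which is what ``$n$-connective'' means here.

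\textbf{Step 3: the special case.} Apply this to the Postnikov truncation $R\to H\pi_0R=\tau_{\le 0}R$. This is a map of $\mathbb{E}_1$-rings with twisted $G$-action, and its fiber is $\tau_{\ge1}R$, so it is $1$-connective. Steps 1 and 2 then give that $\TS_s(R)\to\TS_s(H\pi_0R)$ is an isomorphism for $s=0$ and surjective for $s=1$. Composing with the map $\TS_s(H\pi_0R)\to\HS_s(\pi_0R)$ of \cref{cor:low-degrees}, which is an isomorphism for $s\le 2$, proves the claim.

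The main point requiring care is Step 1: connectivity of the tensor powers needs both rings to be connective, and this hypothesis is implicit in the statement's setting. I would also check that the truncation map is compatible with the twisted $G$-action. This should follow because $\tau_{\le0}$ is lax symmetric monoidal on connective spectra and so preserves the relevant operadic algebra structure.
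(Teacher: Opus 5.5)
Your proof is correct, provided $R$ and $S$ are connective. The paper needs this too, since \cref{SS} is stated only for connective rings. For the first claim, though, you take a different route from the paper.

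The paper's proof is a one-line appeal to \cref{SS} and \cref{cor:low-degrees}. Presumably the intended argument is that an $n$-connective map $R\to S$ induces a map of Whitehead-filtration spectral sequences that is an isomorphism in a range of total degrees on $E^1$, and surjective at the edge. This then propagates to the abutments by strong convergence.

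You instead prove a spectrum-level statement directly. Each component $f^{\otimes k}$ of the natural transformation of bar constructions is connective in the same range, (co)fibers commute with colimits in $\Sp$, and $\Sp_{\ge n}$ is closed under colimits. This route has three advantages:
\begin{enumerate}
\item it needs no bookkeeping with differentials or convergence;
\item it gives connectivity of the (co)fiber of $\mathrm{T}\varphi(f)$ itself, not just a statement about homotopy groups;
\item it applies verbatim to $\mathrm{T}\mathbf{G}^+$ for any crossed simplicial group.
\end{enumerate}
In fact, identifying the map on $E^1$-pages in the paper's argument already requires your Step 1, namely connectivity of $\pi_*(R^{\otimes k})\to\pi_*(S^{\otimes k})$ in a range. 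The paper's route has the advantage of staying inside the same spectral sequence used for \cref{cor:low-degrees}. Your treatment of the ``in particular'' clause --- apply the first part to $R\to H\pi_0R$ and compose with \cref{cor:low-degrees} --- is exactly what the paper intends.

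There is one indexing slip to fix. In Step 1, ``$n$-connective map'' means the cofiber is $n$-connective. Under that convention the long exact sequence only gives an isomorphism on $\pi_s$ for $s<n-1$ and a surjection for $s=n-1$. For example, $H\mathbb{Z}\to 0$ has $1$-connective cofiber but is not a $\pi_0$-isomorphism. So the last sentence of Step 2 is off by one.

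This is cancelled in Step 3, where you call $R\to H\pi_0R$ $1$-connective because its \emph{fiber} $\tau_{\ge1}R$ is $1$-connective; its cofiber $\Sigma\tau_{\ge1}R$ is actually $2$-connective. So your final conclusion is right.

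The cleanest repair is to run Steps 1 and 2 with fibers throughout:
\begin{itemize}
\item $\mathrm{fib}(\mathrm{id}^{\otimes i}\otimes f\otimes\mathrm{id}^{\otimes j})\simeq R^{\otimes i}\otimes\mathrm{fib}(f)\otimes S^{\otimes j}$;
\item the fiber of a composite is an extension of the fibers;
\item fibers commute with colimits in $\Sp$.
\end{itemize}
With this convention, ``fiber $n$-connective'' means precisely ``isomorphism on $\pi_s$ for $s<n$ and surjection for $s=n$.''
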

\begin{proof}
This is immediate from \cref{SS} and \cref{cor:low-degrees}. 
\end{proof}
If $R$ is a discrete ring, let $([R,R])$ denote the ideal generated by all elements of the form $rs-sr$ for $r,s\in R$.

\begin{corollary}\label{cor:pi0}
Let $R$ be a connective $\mathbb{E}_1$-ring spectrum. There is an isomorphism 
\[ 
\TS_0(R)\cong \pi_0R/([\pi_0R,\pi_0R])\,.
\]
Moreover, $\pi_0R=([\pi_0R,\pi_0R])$ if and only if $\TS(R)=0$. 
\end{corollary}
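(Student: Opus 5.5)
By \cref{cor:pi0-of-TS}, the map $\TS_0(R)\to\HS_0(\pi_0R)$ is an isomorphism, so everything reduces to the discrete ring $\pi_0R$. Two routes suggest themselves: compute $\HS_0(\pi_0 R)$ directly from the symmetric bar construction, or use the universal property of \cref{topological symmetric homology is topological representation homology}. I would use the universal property, since it gives the ring structure and sidesteps explicit Tor calculations.

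The key input is that $\TS(R)\simeq \mathrm{HR}^{\mathbb{E}_\infty}(R)$ is the free $\mathbb{E}_\infty$-ring on $R$, and that $\TS(R)$ is connective. Connectivity holds because it is a colimit of the connective spectra $R^{\otimes n+1}$. For any discrete commutative ring $C$, viewed as the connective $\mathbb{E}_\infty$-ring $HC$, I would compute
\[
\Hom_{\mathrm{CRing}}(\pi_0\TS(R),C)\cong \pi_0\Map_{\Alg_{\mathbb{E}_\infty}}(\TS(R),HC)\cong \pi_0\Map_{\Alg_{\mathbb{E}_1}}(R,HC)\cong \Hom_{\mathrm{Ring}}(\pi_0R,C).
\]
The outer identifications use that $\pi_0$ is left adjoint to $H$ on connective $\mathbb{E}_k$-rings, with target commutative rings for $k=\infty$ and associative rings for $k=1$. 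Any ring map $\pi_0R\to C$ factors uniquely through the abelianization $\pi_0R/([\pi_0R,\pi_0R])$. Hence $\pi_0\TS(R)$ and $\pi_0R/([\pi_0R,\pi_0R])$ corepresent the same functor on commutative rings, and Yoneda gives the isomorphism. As a sanity check, I would compare with the direct computation. In $\HS_0(\pi_0R)=\mathrm{colim}_{\DS}\,\BDS(\pi_0R)$, the $\Sigma_2$-coinvariants together with the two codegeneracies $R^{\otimes2}\to R$ force $rs=sr$. This recovers exactly the abelianization, as in Ault's computation.

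For the second claim, $\pi_0R=([\pi_0R,\pi_0R])$ means that the quotient ring vanishes. By the first part, this is equivalent to $\TS_0(R)=0$, that is, $1=0$ in $\pi_0\TS(R)$. Since $\TS(R)$ is an $\mathbb{E}_\infty$-ring, hence a ring spectrum, $1=0$ in $\pi_0$ implies that every homotopy group, being a $\pi_0$-module, vanishes, so $\TS(R)\simeq0$. The converse direction is trivial.

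The main obstacle is justifying the adjunction step precisely. One needs that $\pi_0$ of the free $\mathbb{E}_\infty$-ring is computed by the Yoneda argument above, which relies on $\tau_{\le0}$ being left adjoint to inclusion on connective $\mathbb{E}_k$-algebras (HA 7.1.3). One also needs to verify that the $\mathbb{E}_\infty$ structure on $\TS(R)$ from \cref{Einfty-structure} agrees with the free one, which \cref{topological symmetric homology is topological representation homology} provides.
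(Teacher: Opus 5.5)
Your argument is correct, but it takes a different route from the paper's.

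\textbf{The paper's route.} The proof is a two-step reduction. First, \cref{cor:pi0-of-TS} (the Whitehead-tower spectral sequence of \cref{SS}) identifies $\TS_0(R)$ with $\HS_0(\pi_0R)$. Then Ault's algebraic computation \cite[Theorem~86]{Aul10} gives $\HS_0(A)\cong A/([A,A])$. The vanishing statement uses the ring structure exactly as you do; this is also how \cref{cor:not-morita} passes from $\TS_0=0$ to $\TS=0$.

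\textbf{Your route.} You never use the spectral sequence or Ault's theorem. Your main computation needs only three facts: $\TS(R)$ is connective, it is the free $\mathbb{E}_\infty$-ring on $R$, and $\tau_{\le 0}$ is left adjoint to the inclusion of discrete $\mathbb{E}_k$-rings. So your opening appeal to \cref{cor:pi0-of-TS} is not actually needed. Likewise, your worry about matching $\mathbb{E}_\infty$-structures is harmless: the statement only needs an additive isomorphism, and the vanishing argument works for any unital ring structure.

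\textbf{What each route buys.} Yours shows that the isomorphism is one of commutative rings, induced by the unit $\pi_0R\to\TS_0(R)$. Combined with \cref{cor:pi0-of-TS}, it gives an independent proof of Ault's formula for $\HS_0$. It also applies verbatim to $\TB$ via \cref{TB-rep-homology}: discrete $\mathbb{E}_2$-rings are already commutative, so $\TB_0(R)\cong\pi_0R/([\pi_0R,\pi_0R])$, with no algebraic input required. The paper's route instead ties the result to classical symmetric homology, and the same spectral sequence also controls degrees $1$ and $2$.

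\textbf{A small correction to your sanity check.} The $\Sigma_2$-relations at $[1]$ together with the codegeneracies only kill additive commutators. That yields $\HH_0(A)=A/[A,A]$, which differs from $A/([A,A])$, for instance, for $A=M_n(k)$. To obtain the two-sided ideal you need the $\Sigma_3$-relations at $[2]$, such as $abc\sim acb$, which give $a[b,c]\sim 0$.
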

\begin{proof}
This follows from~\cref{cor:pi0-of-TS} and~\cite[Theorem~86]{Aul10}. 
\end{proof}

From this, we can also conclude that $\TS$ is not Morita invariant. This should not come as a surprise, since the algebraic counterpart is also not Morita invariant. 
\begin{corollary}\label{cor:not-morita}
 The functor $\TS$ is not Morita invariant as a functor on connective $\mathbb{E}_1$-ring spectra.
\end{corollary}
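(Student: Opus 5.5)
The plan is to produce two connective $\mathbb{E}_1$-ring spectra that are Morita equivalent but have different $\TS$, and to detect the difference in $\pi_0$ using \cref{cor:pi0}. The natural candidates are a ring and a matrix ring over it. Take a nonzero commutative discrete ring $k$, for instance $k=\mathbb{Z}$ or a field, and compare $R=Hk$ with $S=HM_n(k)$ for some $n\ge 2$. These are Morita equivalent as $\mathbb{E}_1$-ring spectra: $HM_n(k)\simeq \mathrm{End}_{Hk}(Hk^{\oplus n})$, and $Hk^{\oplus n}$ is a compact generator of $\mathrm{Mod}_{Hk}$. Hence $\mathrm{Mod}_{Hk}\simeq \mathrm{Mod}_{HM_n(k)}$ by Schwede--Shipley, or simply by classical Morita theory for discrete rings passed to derived categories. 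Both spectra are connective.

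Next, apply \cref{cor:pi0}. For $R=Hk$ it gives
\[
\TS_0(Hk)\cong k/([k,k])=k\neq 0,
\]
since $k$ is commutative and nonzero. For $S=HM_n(k)$ it gives
\[
\TS_0(HM_n(k))\cong M_n(k)/([M_n(k),M_n(k)]).
\]
I will show that the two-sided ideal generated by commutators in $M_n(k)$ is all of $M_n(k)$ when $n\ge 2$. Using elementary matrices, $[E_{12},E_{21}]=E_{11}-E_{22}$ lies in the ideal. Multiplying on the left by $E_{11}$ gives $E_{11}\cdot(E_{11}-E_{22})=E_{11}$, so $E_{11}$ lies in the ideal. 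Then $E_{ii}=E_{i1}E_{11}E_{1i}$ also lies in the ideal, and so does the identity $\sum_i E_{ii}$. Therefore $\TS_0(HM_n(k))=0$, and by the second part of \cref{cor:pi0} in fact $\TS(HM_n(k))\simeq 0$.

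Since $\TS(Hk)$ has nonzero $\pi_0$ while $\TS(HM_n(k))=0$, the functor $\TS$ does not send this Morita equivalence to an equivalence, so $\TS$ is not Morita invariant. The only substantive input is \cref{cor:pi0}, which is already available. The remaining step that needs any care is checking the Morita equivalence in the $\infty$-categorical sense, and this is standard. I expect no real obstacle; the commutator-ideal computation is the heart of the argument and is elementary.
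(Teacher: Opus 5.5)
Your proposal is correct and follows the paper's proof: both compare $Hk$ (the paper takes $k=\mathbb{Z}$) with the Morita-equivalent $HM_n(k)$ for $n\ge 2$, and use \cref{cor:pi0} to see that $\TS_0(Hk)\cong k\neq 0$ while $\TS_0(HM_n(k))=0$. Your elementary-matrix check that the two-sided commutator ideal of $M_n(k)$ is all of $M_n(k)$ supplies a step the paper leaves implicit.
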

\begin{proof}
By \cref{cor:pi0}, $\TS_0(HM_n(\mathbb{Z}))=0$ for $n\ge 2$ and consequently  $\TS(HM_n(\mathbb{Z}))=0$ for all $n\ge 2$ whereas $\TS_0(H\mathbb{Z}) \cong \mathbb{Z}/([\mathbb{Z},\mathbb{Z}])\cong \mathbb{Z}$.
The claim now follows from the fact that $\mathbb{Z}$ and $M_n(\mathbb{Z})$ are Morita equivalent.
\end{proof}

\begin{remark}
Similar arguments to those above imply that topological hyperoctahedral homology $\mathrm{TO}$ is not Morita invariant as a functor on connective $\mathbb{E}_1$-ring spectra with anti-involution using~\cite[Theorem~5.8]{Gra23}. 
\end{remark}

We end this section by showing that there is also a B\"okstedt-type spectral sequence which computes the homology of $\TS(R)$. 

\begin{proposition}\label{prop:Bokstedt}
Let $(G,\varphi)$ be a group with parity. 
Let $R$ be a connective $\mathbb{E}_1$-ring spectrum with twisted $G$-action and let $k$ be a field. Then there is a multiplicative strongly convergent B\"okstedt-type spectral sequence 
\begin{equation}\label{general-BokSS}
    E^2 = \mathrm{H}\varphi_*(H_*(R;k)/k)\implies H_*(\mathrm{T}\varphi(R);k)\,.
\end{equation}
In particular, there is a strongly convergent B\"okstedt-type spectral sequence 
\[ 
    E^2 = \HS_*(H_*(R;k))\implies H_*(\TS(R);k)
\]
where $\HS$ is defined as in \cref{derived HS} here. 
\end{proposition}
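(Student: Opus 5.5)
We realize $\mathrm{T}\varphi(R)$ as the colimit over $\Delta\varphi\wr\Sigma$ of the bar construction $\BDG(R)=B^\smbullet_{\Delta\varphi\wr\Sigma}(R)$, smash everything with $Hk$, and filter so that the associated graded sees only the homology rings $H_*(R^{\otimes n+1};k)$. Concretely, $Hk\otimes \mathrm{T}\varphi(R)\simeq \operatorname{colim}_{\Delta\varphi\wr\Sigma} Hk\otimes B^\smbullet(R)$, since $Hk\otimes -$ commutes with colimits. We compute this colimit as the realization of a simplicial replacement (Bousfield--Kan), a simplicial spectrum $[q]\mapsto \bigoplus_{c_0\to\cdots\to c_q} Hk\otimes R^{\otimes n_0+1}$. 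The skeletal filtration then gives a spectral sequence whose $E^1$-page is $\pi_*$ of the levels, and whose $E^2$-page is the homology of the resulting chain complex of functors. By the Künneth theorem over the field $k$, $H_*(R^{\otimes n+1};k)\cong H_*(R;k)^{\otimes_k n+1}$. The key check is that this isomorphism is natural in the maps of $\Delta\varphi\wr\Sigma$: cofaces insert units, codegeneracies multiply, and $G\wr\Sigma_{n+1}$ permutes factors with $\varphi$ acting by the anti-involution, all with the Koszul sign rule on the graded side. So $\pi_*(Hk\otimes B^\smbullet(R))\cong B^\smbullet_{\Delta\varphi\wr\Sigma}(H_*(R;k))$ as functors to graded $k$-vector spaces.

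The $E^2$-page is therefore $\Tor^{\Delta\varphi\wr\Sigma}_{*}(\underline{k}, B^\smbullet(H_*(R;k)))$, bigraded by homological degree $s$ and internal degree $t$. By \cref{derived HS} and \cref{remark: HS as homotopy groups}, regarding the graded $k$-algebra $H_*(R;k)$ with twisted $G$-action as a simplicial module, this is exactly $\mathrm{H}\varphi_*(H_*(R;k)/k)$. Taking $G$ trivial, where $\Delta\varphi\wr\Sigma=\DS$, gives the stated special case.

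For convergence, $R$ is connective, so every term $Hk\otimes R^{\otimes n+1}$ is connective. The spectral sequence then lies in the first quadrant with $d^r\colon E^r_{s,t}\to E^r_{s-r,t+r-1}$, and each total degree receives contributions from only finitely many $s$; this gives strong convergence. For multiplicativity, \cref{Einfty-structure} and \cref{map of graded commutative rings} show that $\mathrm{T}\varphi(R)$ is an $\mathbb{E}_\infty$-ring coming from Day convolution of the lax symmetric monoidal functor $F_R$. I would apply the simplicial replacement compatibly with the monoidal structure on $(\Delta\varphi\wr\Sigma)_+$: the product of two nerves of the indexing category maps to one nerve via $\sqcup$, and combined with the shuffle/Eilenberg--Zilber map this makes the skeletal filtration multiplicative.

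The main obstacle is this multiplicativity. The colimit is naturally over $(\Delta\varphi\wr\Sigma)_+$ rather than over $\Delta\varphi\wr\Sigma$, and the two are identified only by a cofinality argument as in \cref{lemma:THG}. So I would first run the construction over the augmented category, where the monoidal product lives and the Day-convolution colimit is visibly lax monoidal. The filtration by nerve degree of a monoidal category is multiplicative via the product map on nerves. I would then check that the $E^2$-identification with $\mathrm{H}\varphi_*$ is unaffected by the augmentation, using the same cofinality argument at the level of graded $k$-modules. A secondary point to verify carefully is the sign convention in the Künneth identification when $\varphi$ is nontrivial, since the anti-involution must reverse tensor factors with the Koszul sign.
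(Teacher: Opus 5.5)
Your argument is correct, but it builds a different spectral sequence from the paper's.

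\emph{The paper's route.} The paper filters the \emph{input} rather than the indexing category. It takes the Whitehead tower $\tau_{\ge\bullet}(Hk\otimes R)$, regarded via relative Day convolution over $\tau_{\ge\bullet}Hk$ as an $\mathbb{E}_1$-algebra with twisted $G$-action in filtered spectra. It then forms the bar construction and its colimit over $\Delta\varphi\wr\Sigma$ inside filtered spectra. The Whitehead tower is lax symmetric monoidal, and the colimit is again a Day-convolution colimit as in \cref{Einfty-structure}. So one gets a filtered $\mathbb{E}_\infty$-ring, and multiplicativity is formal. The associated graded is $\mathrm{T}\varphi$ of the formal graded algebra $H_*(R;k)$ over $Hk$. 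Hence K\"unneth plus \cref{remark: HS as homotopy groups} identifies the first page with $\mathrm{H}\varphi_*(H_*(R;k)/k)$, ring structure included. Completeness and exhaustiveness of $\tau_{\ge\bullet}$, together with connectivity, give strong convergence.

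\emph{Your route.} Yours is the classical B\"okstedt approach. The skeletal filtration of the Bousfield--Kan replacement gives $E^2_{s,t}=\Tor_s^{\Delta\varphi\wr\Sigma}(\underline{k},B^\bullet(H_*(R;k))_t)$ directly. Convergence is automatic for a half-plane spectral sequence with exiting differentials, and the $E^2$-term is very explicit.

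\emph{What each buys.} The price of your route is exactly the point you flag: the multiplicative structure has to be built by hand. That means a pairing of simplicial replacements over $(\Delta\varphi\wr\Sigma)_+$, the Eilenberg--Zilber inclusion of skeleta, and the cofinality comparison of $E^2$-terms. You also need a check that the resulting product on $E^2$ is the Day-convolution product on $\mathrm{H}\varphi_*$. The filtered approach gets all of this for free.

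Note also that in the paper, $\mathrm{H}\varphi_*(H_*(R;k)/k)$ is already the homotopy of the associated graded. In yours it appears only after taking homology of the $E^1$ Bousfield--Kan complex. One expects the two spectral sequences to agree from that page on via a d\'ecalage comparison, though neither argument needs this.
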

\begin{proof}
This is the spectral sequence associated to the functor
\[
\begin{tikzcd}
    \tau_{\ge \bullet} (Hk\otimes R)  \arrow[r, altstackar=3] \ar[out=120, in=60, loop, looseness=6]{}{G}&   (\tau_{\ge \bullet} (Hk\otimes R))^{\otimes_{Hk} 2} \ar[out=120, in=60, loop, looseness=6]{}{G\wr \Sigma_2} \arrow[altstackar=5]{r} & \cdots    
\end{tikzcd}
\]
from $\Delta\varphi\wr \Sigma$ to commutative algebras in filtered spectra. Here, we abuse notation and write $\otimes_{Hk}$ for relative Day convolution over the $\mathbb{E}_\infty$-algebra $\tau_{\ge \bullet}Hk$ in filtered spectra. The fact that this filtration is complete and exhaustive follows from the fact that $\tau_{\ge \bullet}$ is complete and exhaustive. The identification of the ${E}^2$-page comes from the K\"unneth isomorphism, and \cref{remark: HS as homotopy groups}. Since $R$ is connective, the spectral sequence is a first quadrant spectral sequence with exiting differentials and so it strongly converges. 
\end{proof}

\begin{remark}
When $\varphi=\id_{C_2}$, then the input of \eqref{general-BokSS} can be identified with hyperoctahedral homology~\cite{Gra22}. There is work in progress of Graves and Whitehouse studying twisted symmetric homology more generally, which is the input of \eqref{general-BokSS} for a general group with parity. 
\end{remark}

\subsection{Topological \texorpdfstring{$\DG$}{Delta G}-homology of Thom spectra}

For a ring spectrum $R$ which arises as the Thom spectrum over a base space $X$, work of Blumberg, Cohen, and Schlichtkrull gives a formula for $\THH(R)$ in terms of $R$ and $X$ \cite{BCS}. In this subsection, we extend their results to an arbitrary crossed simplicial group $\Delta \mathbf{G}$, proving a general result for $\mathrm{TH}\mathbf{G}$ and $\mathrm{T}\mathbf{G}^+$ of Thom spectra. In the next subsections, we specialize the result to get computations of the topological symmetric homology of Thom spectra of infinite loop maps and of the topological braid homology of Thom spectra of $2$-fold loop maps.  

Let $\mathrm{Pic}_{\mathbb{S}}=\mathrm{Pic}(\mathrm{Sp}^{\otimes})$ denote the space of invertible spectra. This is a group-like $\mathbb{E}_{\infty}$-space with a canonical, strong monoidal functor $\mathrm{Pic}_{\mathbb{S}}\to \Sp$.  Let $BF$ be the classifying space of stable spherical fibrations. Note that $BF\simeq B\mathrm{GL}_1\mathbb{S}$ is the path component of the identity in $\mathrm{Pic}_{\mathbb{S}}$. In the introduction, we phrased our results for Thom spectra associated to maps $f\colon X\to BF$, but here we prove our results in slightly larger generality for maps $f\colon X\to \mathrm{Pic}_{\mathbb{S}}$. 
 
\begin{definition}[{\cite[Definition 1.4]{ABGHR14}}]
    Let $f\colon X\to \mathrm{Pic}_{\mathbb{S}}$ be a map of spaces. The Thom spectrum of $f$, denoted $Mf$ is the homotopy colimit of the composite
    \[
        X\xrightarrow{f} \mathrm{Pic}_{\mathbb{S}}\to \Sp.
    \]
    This is a functor $M\colon \mathscr{S}_{/\mathrm{Pic}_{\mathbb{S}}}\to \Sp$, where the domain is the $\infty$-category of spaces over $\mathrm{Pic}_{\mathbb{S}}$.
\end{definition}

Since $\mathrm{Pic}_{\mathbb{S}}$ is an $\mathbb{E}_{\infty}$-space, the slice category $\mathscr{S}_{/\mathrm{Pic}_{\mathbb{S}}}$ inherits the structure of a symmetric monoidal $\infty$-category \cite[Remark 2.2.2.5]{HA}.  
Informally, for a pair of objects $f\colon X\to \mathrm{Pic}_{\mathbb{S}}$ and $g\colon Y\to \mathrm{Pic}_{\mathbb{S}}$ in $\mathscr{S}_{/\mathrm{Pic}_{\mathbb{S}}}$, the product of these two objects is given by the composite
\[
    X\times Y\xrightarrow{f\times g} \mathrm{Pic}_{\mathbb{S}}\times \mathrm{Pic}_{\mathbb{S}}\to \mathrm{Pic}_{\mathbb{S}}
\]
where the second map is the multiplication on $\mathrm{Pic}_{\mathbb{S}}$. 

\begin{lemma}\label{lemma: Th is strong monoidal}
    The Thom spectrum functor 
    $M\colon \mathscr{S}_{/\mathrm{Pic}_{\mathbb{S}}}\to \Sp\,$ is strong monoidal and preserves all colimits.
\end{lemma}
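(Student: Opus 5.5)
The plan is to realize $M$ as a left Kan extension and then read off both properties from the formal behaviour of Kan extensions and Day convolution. Write $\iota\colon \mathrm{Pic}_{\mathbb{S}}\to \Sp$ for the canonical strong symmetric monoidal functor. By straightening, the slice $\mathscr{S}_{/\mathrm{Pic}_{\mathbb{S}}}$ is equivalent to $\Fun(\mathrm{Pic}_{\mathbb{S}},\mathscr{S})$, viewing $\mathrm{Pic}_{\mathbb{S}}$ as an $\infty$-groupoid; a map $f\colon X\to \mathrm{Pic}_{\mathbb{S}}$ corresponds to its fiber functor. Under this equivalence, $Mf=\operatorname{colim}_X \iota\circ f$ is exactly the value on $f$ of the left Kan extension of $\iota$ along the Yoneda embedding $\mathrm{Pic}_{\mathbb{S}}\to \Fun(\mathrm{Pic}_{\mathbb{S}},\mathscr{S})$. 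Concretely, $\operatorname{colim}_X \iota f\simeq \operatorname{colim}_{y\in \mathrm{Pic}_{\mathbb{S}}} X_y\otimes \iota(y)$, where $X_y$ is the fiber over $y$. This is a fiberwise decomposition of the colimit over $X$, using that colimits over the total space of a left fibration can be computed by first taking colimits over the fibers.

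Colimit preservation then follows from \cite[5.1.5.6]{HTT}: $\Fun(\mathrm{Pic}_{\mathbb{S}},\mathscr{S})$ is the free cocompletion of $\mathrm{Pic}_{\mathbb{S}}$, and the left Kan extension of a functor into a cocomplete target preserves all small colimits. One can also see this directly, since colimits in $\mathscr{S}_{/\mathrm{Pic}_{\mathbb{S}}}$ are computed on underlying spaces and a colimit of colimits is a colimit.

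For strong monoidality, the symmetric monoidal structure on $\mathscr{S}_{/\mathrm{Pic}_{\mathbb{S}}}$ described above corresponds under straightening to Day convolution on $\Fun(\mathrm{Pic}_{\mathbb{S}},\mathscr{S})$. I will verify this by checking that the fiber of $X\times Y\to \mathrm{Pic}_{\mathbb{S}}\times\mathrm{Pic}_{\mathbb{S}}\to\mathrm{Pic}_{\mathbb{S}}$ over $z$ is $\operatorname{colim}_{y\otimes y'\to z}X_y\times Y_{y'}$, which is the Day convolution formula. Day convolution is the universal colimit-preserving-in-each-variable extension of the monoidal structure along Yoneda \cite[4.8.1.10]{HA}. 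Hence the left Kan extension of the strong monoidal functor $\iota$ into the presentably symmetric monoidal $\Sp$ is itself strong symmetric monoidal. Concretely, the comparison map is
\[
    M(f\otimes g)\simeq \operatorname{colim}_{X\times Y}\iota(f(x)\otimes g(y))\simeq \operatorname{colim}_{X\times Y}\iota f(x)\otimes \iota g(y)\simeq Mf\otimes Mg,
\]
which uses that $\iota$ is strong monoidal and that $\otimes$ on $\Sp$ preserves colimits in each variable separately.

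The main obstacle is making the monoidal structure coherent rather than merely objectwise: the displayed chain must assemble into a symmetric monoidal functor. I will handle this by citing the universal property of Day convolution on presheaves, or alternatively \cite[Theorem 1.6]{ABGHR14}, rather than constructing the coherences by hand. The identification of the slice monoidal structure with Day convolution is the one step that requires genuine care.
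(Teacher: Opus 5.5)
Your proposal is correct and follows essentially the paper's route: the paper deduces colimit preservation directly from the definition of $M$ as a colimit (your left Kan extension description along the Yoneda embedding makes this precise), and it obtains strong monoidality by citing \cite[Theorem~1.6]{ABG18}, which packages the coherent statement your Day convolution argument is aiming at. The one step to tighten is the one you already flag. Your fiber computation identifies the slice product with Day convolution only objectwise, and the universal property of Day convolution upgrades this to a symmetric monoidal equivalence only once you also know that $y\mapsto(\{y\}\to\mathrm{Pic}_{\mathbb{S}})$ is symmetric monoidal for the slice structure; so either supply that, or cite \cite[Theorem~1.6]{ABG18} (not \cite{ABGHR14}) for it.
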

\begin{proof}
The fact that $M$  preserves colimits follows from the definition of a Thom spectrum as a colimit~\cite[Definition 1.4]{ABGHR14}. The fact that it is strong monoidal follows from~\cite[Theorem~1.6]{ABG18}.  This fact is also classical, see for example~\cite{LMS,Sch09}.   
\end{proof}

\begin{lemma}\label{lem: En-algoverBF}
Let $G$ be a group with parity $\varphi \colon G\to C_2$. 
    An $\mathbb{E}_{1}$-algebra with twisted $G$-action in $\mathscr{S}_{/\mathrm{Pic}_{\mathbb{S}}}$ is the same data as an $\mathbb{E}_1$-space $X$ with twisted $G$-action and a map of $\mathbb{E}_1$-spaces $f\colon X\to \mathrm{Pic}_{\mathbb{S}}$ with twisted $G$-action. 
\end{lemma}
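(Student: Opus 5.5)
The plan is to identify algebras in a slice category with maps of algebras into the base. The key general fact is \cite[2.2.2.4, 2.2.2.5]{HA}. For an $\infty$-operad $\cO^\otimes$ and an $\cO$-algebra $B$ in a symmetric monoidal $\infty$-category $\cC$, the slice $\cC_{/B}$ inherits an $\cO$-monoidal structure. For this structure, the forgetful functor induces an equivalence
\[
\Alg_{\cO}(\cC_{/B})\simeq \Alg_{\cO}(\cC)_{/B}.
\]
We apply this with $\cC=\mathscr{S}$ with its cartesian structure and $B=\mathrm{Pic}_{\mathbb{S}}$. The $\mathbb{E}_\infty$-structure on $\mathrm{Pic}_{\mathbb{S}}$ restricts along $\Assoc_\varphi^\otimes\to \mathrm{Comm}^\otimes$ to an $\Assoc_\varphi$-algebra. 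This is the twisted $G$-action on $\mathrm{Pic}_{\mathbb{S}}$, which we take to be trivial, through the unique map to the terminal operad.

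The steps are as follows.

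First, recall from \cite[3.20]{THG} that an $\mathbb{E}_1$-algebra with twisted $G$-action in a symmetric monoidal $\infty$-category $\cD$ is by definition an $\Assoc_\varphi$-algebra in $\cD$, i.e.\ a map of $\infty$-operads $\Assoc_\varphi^\otimes\to\cD^\otimes$.

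Second, check that the symmetric monoidal structure on $\mathscr{S}_{/\mathrm{Pic}_{\mathbb{S}}}$ described above is exactly the slice structure of \cite[2.2.2.5]{HA}, taking $B=\mathrm{Pic}_{\mathbb{S}}$ as a commutative algebra in $\mathscr{S}$.

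Third, apply \cite[2.2.2.4]{HA} with $\cO^\otimes=\Assoc_\varphi^\otimes$. This identifies $\Alg_{\Assoc_\varphi}(\mathscr{S}_{/\mathrm{Pic}_{\mathbb{S}}})$ with the overcategory $\Alg_{\Assoc_\varphi}(\mathscr{S})_{/\mathrm{Pic}_{\mathbb{S}}}$, where $\mathrm{Pic}_{\mathbb{S}}$ is regarded as an $\Assoc_\varphi$-algebra by restriction. Objects of the overcategory are $\mathbb{E}_1$-spaces $X$ with twisted $G$-action together with a map $X\to\mathrm{Pic}_{\mathbb{S}}$ of $\mathbb{E}_1$-spaces with twisted $G$-action, which is the claim.

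The main obstacle is making precise what the twisted $G$-action on $\mathrm{Pic}_{\mathbb{S}}$ should be. When $\varphi$ is nontrivial, a twisted action must reverse multiplication for elements of odd parity. The commutativity of $\mathrm{Pic}_{\mathbb{S}}$ lets the identity serve as an anti-involution, and this is encoded by restriction along $\Assoc_\varphi^\otimes\to\mathrm{Comm}^\otimes$.

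To justify that this map of operads exists, use $\mathrm{Comm}^\otimes=\Fin_*$ as the terminal operad. Then any $\Assoc_\varphi$-structure arises by restriction along the unique map $\Assoc_\varphi^\otimes\to\Fin_*$, so no additional coherence needs checking. The lemma is then a direct specialization of Lurie's result.
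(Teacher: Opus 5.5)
Your proof is correct and takes essentially the same route as the paper. The paper only says the lemma follows by unpacking \cite[Lemma~2.12]{ACB19}, a general identification of $\mathscr{O}$-algebras in a slice $\mathscr{S}_{/X}$ with $\mathscr{O}$-algebras over $X$; you invoke the same slicing principle from \cite[2.2.2.4--2.2.2.5]{HA} directly with $\mathscr{O}^\otimes=\Assoc_\varphi^\otimes$, and you also make explicit something the statement leaves implicit: the twisted $G$-action on $\mathrm{Pic}_{\mathbb{S}}$ is the one restricted along $\Assoc_\varphi^\otimes\to\Fin_*$.
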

\begin{proof}
This follows by unpacking~\cite[Lemma 2.12]{ACB19}.  
\end{proof}

\begin{proposition}
    \label{thm: HG commutes with strong monoidal functors}
    Let $\cC$ and  $\cD$ be presentably symmetric monoidal $\infty$-categories, and let $F\colon \cC\to \cD$ be a strong monoidal functor which preserves colimits.  For any crossed simplicial group $\DG$, and $R$ a monoid in $\cC$ with twisted $G_0$-action, there is an equivalence 
    \[
        \mathrm{H}\mathbf{G}^+(F(R)/\mathcal{D})\simeq F(\mathrm{H}\mathbf{G}^+(R/\mathscr{C})).
    \]
    Moreover, if $\DG$ is self-dual then there is an equivalence 
     \[
        \mathrm{TH}\mathbf{G}(F(R)/\mathcal{D})\simeq F(\mathrm{TH}\mathbf{G}(R/\mathscr{C})).
    \]
\end{proposition}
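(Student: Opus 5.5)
The plan is to show that the bar construction $\BDG(F(R))\colon \DG\to\cD$ is equivalent to the composite $F\circ \BDG(R)$. Once this is known, the claim follows because $F$ preserves colimits:
\[
\mathrm{H}\mathbf{G}^+(F(R)/\cD)=\operatorname{colim}_{\DG}\BDG(F(R))\simeq \operatorname{colim}_{\DG}F\circ\BDG(R)\simeq F\bigl(\operatorname{colim}_{\DG}\BDG(R)\bigr)=F(\mathrm{H}\mathbf{G}^+(R/\cC)).
\]
The self-dual case is handled the same way after precomposing with $\Delta^{\op}\to\DG^{\op}\cong\DG$. In that case $\mathrm{TH}\mathbf{G}$ is again a colimit of a diagram obtained by restricting the same functor, so the identification of diagrams restricts and $F$ commutes with the colimit.

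The key step is the identification of diagrams. By definition, $\BDG(R)$ is the restriction along $\DG\to(\Delta\lambda_0\wr\Sigma)_+\simeq\Env(\Assoc_{\lambda_0})$ of the strong symmetric monoidal functor $F_R$ corresponding to $R$. I would use the universal property of the symmetric monoidal envelope (\cref{universal property of envelope}, in the form \cite[2.2.4.9]{HA}, applied to the operad $\Assoc_{\lambda_0}$). This gives an equivalence between $\Fun^\otimes(\Env(\Assoc_{\lambda_0}),\cC)$ and the category of $\mathbb{E}_1$-algebras in $\cC$ with twisted $G_0$-action. The equivalence is given by restriction along $\iota\colon\Assoc_{\lambda_0}^\otimes\to\Env(\Assoc_{\lambda_0})^\otimes$, and it is natural in $\cC$ with respect to strong symmetric monoidal functors.

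The composite $F\circ F_R$ is strong symmetric monoidal, and its restriction along $\iota$ is $F\circ R^\otimes$, which is the twisted $\mathbb{E}_1$-algebra $F(R)$ in $\cD$. By uniqueness in the universal property, $F\circ F_R\simeq F_{F(R)}$ as strong symmetric monoidal functors, and hence also as underlying functors. Restricting along $\DG\to\Env(\Assoc_{\lambda_0})$ gives $F\circ\BDG(R)\simeq\BDG(F(R))$.

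The main obstacle is making this naturality precise. One has to check that postcomposition with $F$ on strong monoidal functors corresponds, under the envelope equivalence, to postcomposition with $F$ on operad maps. This is clear from the description of the inverse equivalence as $\Env(g)$ followed by $\otimes$, as in \eqref{equation 1}: a strong monoidal $F$ commutes with the tensor map $\Env(\cC)^\otimes\to\cC^\otimes$ up to coherent equivalence. A secondary point is that $F(R)$ inherits the twisted $G_0$-action, which is automatic since a strong monoidal functor induces a map of $\infty$-operads $\cC^\otimes\to\cD^\otimes$.
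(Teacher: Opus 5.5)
Your proposal is correct and follows the paper's proof. The paper likewise identifies $\BDG(F(R))\simeq F\circ\BDG(R)$ by writing $F_R$ as $\otimes\circ\Env(R)$ and using that $\otimes\colon\Env(\cC)\to\cC$ is natural in strong monoidal functors, which is exactly your ``main obstacle'' paragraph; it then pulls $F$ out of the colimit and handles the self-dual case by precomposing with $\Delta^{\op}\to\DG^{\op}\cong\DG$. Your uniqueness argument already suffices on its own: $\iota^*(F\circ F_R)\simeq F\circ R^\otimes\simeq\iota^*(F_{F(R)})$ and $\iota^*$ is an equivalence, so the naturality check you flag is not actually needed.
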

\begin{proof}
    There is a diagram
\[\begin{tikzcd}
	&&&& {\Env(\cC)} && {\cC} \\
	{\Delta \mathbf{G}} & {\Delta\lambda_0\wr \Sigma} & {\Env(\Assoc^{\lambda_0})} \\
	&&&& {\Env(\cD)} && {\cD}
	\arrow["\otimes", from=1-5, to=1-7]
	\arrow["{\Env(F)}", from=1-5, to=3-5]
	\arrow["F", from=1-7, to=3-7]
	\arrow["{\widetilde{\lambda}}", from=2-1, to=2-2]
	\arrow[from=2-2, to=2-3]
	\arrow["{\Env(R)}", from=2-3, to=1-5]
	\arrow["{\Env(F(R))}"', from=2-3, to=3-5]
	\arrow["\otimes", from=3-5, to=3-7]
\end{tikzcd}\]
of $\infty$-categories where the right square commutes (up to natural equivalence) because the map $\Env(\cC)\to \cC$ is natural in strong monoidal functors and the triangle commutes because $\Env$ is a functor.  By definition, the upper composite of this diagram is $F(\BDG(R))$ and the lower composite is $\BDG(F(R))$ so we obtain an equivalence $\BDG(F(R))\simeq F(\BDG(R))$.  Taking colimits, and using the hypothesis that $F$ commutes with colimits, proves the claim.

When $\Delta \mathbf{G}$ is self-dual, the same argument works for $\mathrm{TH}\mathbf{G}$ by precomposing with the map $\Delta^{\op}\to \DG^{\op}\to \DG$ where the second map is the self-duality isomorphism of $\Delta \mathbf{G}$. 
\end{proof}

\begin{corollary}\label{BCS}
Let $\DG$ be a crossed simplicial group.  Let $f\colon X\to \Pic_{\mathbb{S}}$ be a map of $\mathbb{E}_1$-spaces with twisted $G_0$-action.  Then there is an equivalence 
\[
    \mathrm{T}\mathbf{G}^+(Mf) \simeq M(\mathrm{H}\mathbf{G}^+(f/\mathscr{S}_{/\Pic_\mathbb{S}}))\,.
\]    
Moreover, if $\DG$ is self-dual then there is an equivalence 
\[
    \mathrm{TH}\mathbf{G}(Mf)\simeq M(\mathrm{TH}\mathbf{G}(f/\mathscr{S}_{/\Pic_\mathbb{S}})) \,.
\] 
\end{corollary}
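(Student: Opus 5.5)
The corollary should follow by applying \cref{thm: HG commutes with strong monoidal functors} with $\cC=\mathscr{S}_{/\Pic_{\mathbb{S}}}$, $\cD=\Sp$ and $F=M$ the Thom spectrum functor. The work is to check the hypotheses and to identify the two sides of the resulting equivalence.

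First, I check that both categories are presentably symmetric monoidal. For $\Sp$ this is standard. The slice $\mathscr{S}_{/\Pic_{\mathbb{S}}}$ is presentable, being a slice of the presentable $\infty$-category $\mathscr{S}$. It carries the symmetric monoidal structure induced by the $\mathbb{E}_\infty$-structure on $\Pic_{\mathbb{S}}$, as recalled before \cref{lemma: Th is strong monoidal}. The tensor product preserves colimits in each variable separately. This holds because colimits in the slice are computed in $\mathscr{S}$, and $X\times(-)$ preserves colimits in $\mathscr{S}$, compatibly with the structure maps to $\Pic_{\mathbb{S}}$. Next, \cref{lemma: Th is strong monoidal} says that $M$ is strong monoidal and colimit preserving. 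I will take "strong monoidal" there in the $\infty$-categorical sense of a symmetric monoidal functor, which is what \cite{ABG18} provides.

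Second, I identify the input. By \cref{lem: En-algoverBF}, the given map of $\mathbb{E}_1$-spaces with twisted $G_0$-action $f\colon X\to\Pic_{\mathbb{S}}$ is exactly a monoid with twisted $G_0$-action in $\mathscr{S}_{/\Pic_{\mathbb{S}}}$. Since $M$ is symmetric monoidal, it sends this algebra to an $\mathbb{E}_1$-ring spectrum with twisted $G_0$-action. Its underlying spectrum is $M(f)=Mf$, and its multiplicative structure is the usual one on the Thom spectrum of an $\mathbb{E}_1$-map. Applying \cref{thm: HG commutes with strong monoidal functors} with $R=f$ then gives
\[
\mathrm{H}\mathbf{G}^+(Mf/\Sp)\simeq M\big(\mathrm{H}\mathbf{G}^+(f/\mathscr{S}_{/\Pic_{\mathbb{S}}})\big).
\]
The left side is by definition $\mathrm{T}\mathbf{G}^+(Mf)$. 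In the self-dual case the second half of the proposition gives the statement for $\mathrm{TH}\mathbf{G}$ in the same way.

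The main point needing care is the identification of the algebra $M(f)$ with the $\mathbb{E}_1$-ring $Mf$ carrying the intended twisted $G_0$-action. The induced action should be the one used in the Thom spectrum literature, for example \cite{HKZ} in the dihedral case. I would handle this by noting that the symmetric monoidal functor $M$ induces a functor on $\Assoc_{\lambda_0}$-algebras by postcomposition. The twisted action on $Mf$ is then defined to be this induced one, so there is nothing further to check beyond the naturality already built into $\Env$.
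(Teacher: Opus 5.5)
Your proposal is correct and follows the paper's proof exactly. The paper also obtains the corollary by applying \cref{thm: HG commutes with strong monoidal functors} to the Thom spectrum functor $M\colon \mathscr{S}_{/\Pic_{\mathbb{S}}}\to\Sp$. It uses \cref{lemma: Th is strong monoidal} for the hypotheses on $M$, and \cref{lem: En-algoverBF} to regard $f$ as a monoid with twisted $G_0$-action in the slice. Your extra checks spell out what the paper leaves implicit: that the slice is presentably symmetric monoidal, and that the twisted action on $Mf$ is the one induced by $M$.
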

\begin{proof}
    This is immediate from \cref{thm: HG commutes with strong monoidal functors}, in light of \cref{lemma: Th is strong monoidal} and \cref{lem: En-algoverBF}.
\end{proof}
\subsection{Topological symmetric homology of Thom spectra} In this section, we give an explicit formula for $\TS(Mf)$ for infinite loop maps $f\colon X\to \mathrm{Pic}_{\mathbb{S}}$. For any based space $X$, we write $QX:= \Omega^{\infty}\Sigma^{\infty}X$.
If $X=\Omega^{\infty} Z$, the counit $\varepsilon_Z\colon \Sigma^{\infty}\Omega^{\infty}Z\to Z$ of the $\Sigma^{\infty}\dashv \Omega^{\infty}$ adjunction induces a map of infinite loop spaces
\[ 
    \epsilon = \Omega^{\infty}\varepsilon_{Z} \colon   QX\to X 
\]
whose homotopy fiber we call $\widetilde{Q}X$. Note that as the fiber of an infinite loop map, $\widetilde{Q}X$ is also an infinite loop space, and the canonical map $\iota\colon \widetilde{Q}X\to QX$ is an infinite loop map. After applying $\Sigma^{\infty}_+$, we can identify $\widetilde{Q}X$ with $\bigvee_{k\ge 2}(X^{\otimes  k})_{h\Sigma_k}$. 

More generally, if $X$ is an $n$-fold loop space we write $Q_nX \coloneqq \Omega^n\Sigma^nX$, and this comes equipped with a canonical $n$-fold loop map $Q_nX\to X$ whose homotopy fiber we will denote by $\widetilde{Q}_nX$.  

\begin{lemma}\label{lem:Qtilde}
    There is an equivalence of spaces $\widetilde{Q}_nX\times X\to Q_nX$ for any $n$-fold loop space $X$ (including $n=\infty$).  Moreover, this equivalence is natural in maps of $n$-fold loop spaces. 
\end{lemma}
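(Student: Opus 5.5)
The idea is that the fiber sequence $\widetilde{Q}_nX\to Q_nX\xrightarrow{\epsilon} X$ splits. The splitting comes from the unit $\eta\colon X\to Q_nX$ of the $\Sigma^n\dashv\Omega^n$ adjunction, which satisfies $\epsilon\circ\eta\simeq\id_X$ by the triangle identity. Concretely, write $X\simeq\Omega^nY$. Then $\epsilon=\Omega^n\varepsilon_Y$ is a map of $n$-fold loop spaces, and $\eta\colon\Omega^nY\to\Omega^n\Sigma^n\Omega^nY$ is $\Omega^n$ applied to the unit at $Y$. The triangle identity $\Omega^n\varepsilon_Y\circ\eta_{\Omega^nY}\simeq\id$ then gives the section. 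This section is only a map of spaces, not of loop spaces, but that suffices for a splitting of spaces.

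I will then define
\[
\Phi\colon \widetilde{Q}_nX\times X\longrightarrow Q_nX,\qquad (y,x)\longmapsto \iota(y)\cdot\eta(x),
\]
using the loop multiplication on $Q_nX$ (available for $n\ge 1$, and for $n=\infty$). Both sides are spaces over $X$:
\begin{itemize}
\item the source via the projection to $X$;
\item the target via $\epsilon$.
\end{itemize}
Since $\epsilon$ is an $\mathbb{E}_1$-map, $\epsilon(\iota(y)\eta(x))\simeq \epsilon\iota(y)\cdot\epsilon\eta(x)\simeq e\cdot x\simeq x$, so $\Phi$ is a map over $X$ up to homotopy. Rather than arguing pointwise, I will phrase this cleanly as follows. $\epsilon\colon Q_nX\to X$ is a map of grouplike $\mathbb{E}_1$-spaces, so it is a principal fibration: the fiber $\widetilde{Q}_nX$ acts on $Q_nX$ by left multiplication, compatibly with $\epsilon$. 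A principal fibration with a section is trivial, so $\Phi$ is a fiberwise map that restricts on each fiber to the equivalence $\widetilde{Q}_nX\to\epsilon^{-1}(x)$, $y\mapsto \iota(y)\eta(x)$. This restriction is an equivalence because translation by an element of a grouplike monoid is an equivalence.

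To check that $\Phi$ is an equivalence, it then suffices to compare the long exact sequences of homotopy groups, or to apply the fiberwise equivalence criterion over the base $X$. The base maps by the identity, and the fibers map by equivalences. Grouplikeness of $Q_nX$ needs a remark when $X$ is not connected. $Q_nX=\Omega^n\Sigma^nX$ is always grouplike for $n\ge1$, which is all that is needed, since the translations live in $Q_nX$. For $n=\infty$ the same argument works with $QX$.

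Naturality follows because every ingredient is natural in maps of $n$-fold loop spaces $g\colon X\to X'$:
\begin{itemize}
\item $\eta$ is natural in maps of spaces, in particular in $g$;
\item $\epsilon$ and hence $\widetilde{Q}_n$ and $\iota$ are natural in $n$-fold loop maps, since $g=\Omega^n h$ deloops, or more intrinsically since $\epsilon$ is the $\mathbb{E}_n$-algebra structure map;
\item the multiplication on $Q_nX$ is natural.
\end{itemize}
The main obstacle I expect is making the multiplication and the homotopy $\epsilon\circ\Phi\simeq \mathrm{pr}_X$ coherent enough to get a genuine map of fiber sequences, and hence naturality, in the $\infty$-categorical setting. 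I would handle this by working with the principal fibration / torsor description, $Q_nX$ as an $\widetilde{Q}_nX$-torsor over $X$, so that the equivalence is the trivialization of a torsor induced by the section $\eta$, which is manifestly functorial.
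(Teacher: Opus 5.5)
Your proposal is correct and follows essentially the same route as the paper. The paper also splits $\epsilon$ by the unit $\eta$, takes the map $\mu\circ(\iota\times\eta)\colon \widetilde{Q}_nX\times X\to Q_nX$, checks that it is a map of fibration sequences over $X$ using that $\epsilon$ is multiplicative with $\epsilon\iota\simeq *$ and $\epsilon\eta\simeq\mathrm{id}$, and concludes by the five lemma on long exact sequences; your fiberwise translation argument spells out that last step over every point of $X$, which matters when $X$ is not connected. (One small slip: $\eta\colon \Omega^nY\to\Omega^n\Sigma^n\Omega^nY$ is the unit at $\Omega^nY$, not $\Omega^n$ applied to the unit at $Y$, though the triangle identity you actually use is the right one.)
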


\begin{proof}
    We will omit the $n$ from the notation, as the same proof works in all cases. The canonical map $\epsilon\colon QX\to X$ is split up to homotopy by the unit map $\eta_X\colon X\to QX$; this is a splitting by the triangle identities.  Note that $\eta$  preserves the unit, as it is a map of based spaces.  We have a (homotopy) commutative map of fibrations
\[
\begin{tikzcd}
	{\widetilde{Q}X} && {\widetilde{Q}X\times X} && X \\
	&& {QX\times QX} \\
	{\widetilde{Q}X} && QX && X
	\arrow["{i_1}", from=1-1, to=1-3]
	\arrow["{=}"', from=1-1, to=3-1]
	\arrow["{\pi_2}", from=1-3, to=1-5]
	\arrow["{\iota\times \eta}", from=1-3, to=2-3]
	\arrow["{=}", from=1-5, to=3-5]
	\arrow["\mu", from=2-3, to=3-3]
	\arrow["\iota"', from=3-1, to=3-3]
	\arrow["\epsilon"', from=3-3, to=3-5]
\end{tikzcd}
\]
where the map $\mu$ is an $\mathbb{E}_{n}$-multiplication on $QX$. The left square commutes because $\eta$ preserves the unit. 

Homotopy commutativity of the right square follows from considering the diagram
\[\begin{tikzcd}
	{\widetilde{Q}X\times X} &&&& X \\
	{QX\times QX} && {X\times X} \\
	QX &&&& X
	\arrow["{\pi_2}", from=1-1, to=1-5]
	\arrow["{\iota\times \eta}"', from=1-1, to=2-1]
	\arrow["{*\times \mathrm{id}}", from=1-1, to=2-3]
	\arrow["{=}", from=1-5, to=3-5]
	\arrow["{\epsilon\times \epsilon}"', from=2-1, to=2-3]
	\arrow["\mu"', from=2-1, to=3-1]
	\arrow["\mu", from=2-3, to=3-5]
	\arrow["\epsilon"', from=3-1, to=3-5]
\end{tikzcd}\]
where $*$ denotes the constant maps at the basepoint/unit of $X$.  Since the first diagram above is a map of fibrations (up to homotopy), and the left and right legs are equalities, the middle map is an equivalence by the five lemma and the long exact sequence of homotopy groups.  
\end{proof}

\begin{theorem}\label{thm:TS-thomspectra}
If $f\colon X\to \mathrm{Pic}_{\mathbb{S}}$ is an infinite loop map, then 
\[ \TS(Mf)\simeq Mf\otimes \Omega \widetilde{Q}BX_+\,.\]
\end{theorem}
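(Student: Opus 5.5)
Our plan is to combine \cref{BCS} with \cref{topological symmetric homology is topological representation homology} and then identify the underlying space using \cref{lem:Qtilde}. Take $\DG=\DS$. By \cref{lem: En-algoverBF}, $f$ is an $\mathbb{E}_1$-algebra in $\mathscr{S}_{/\Pic_{\mathbb{S}}}$, and \cref{BCS} gives $\TS(Mf)\simeq M(\mathrm{H}\Sigma^+(f/\mathscr{S}_{/\Pic_{\mathbb{S}}}))$.

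The argument of \cref{representation homology thm} applies in any presentably symmetric monoidal $\infty$-category. Hence $\mathrm{H}\Sigma^+(f/\mathscr{S}_{/\Pic_{\mathbb{S}}})$ is the free $\mathbb{E}_\infty$-algebra in $\mathscr{S}_{/\Pic_{\mathbb{S}}}$ on the $\mathbb{E}_1$-algebra $f$. We then compute this free algebra. Colimits and the forgetful functor in $\mathscr{S}_{/\Pic_{\mathbb{S}}}$ are computed in $\mathscr{S}$, so the underlying $\mathbb{E}_\infty$-space is the free $\mathbb{E}_\infty$-space on the $\mathbb{E}_1$-space $X$. Its structure map is the $\mathbb{E}_\infty$-extension $\bar f$ of $f$, which exists because $\Pic_{\mathbb{S}}$ is $\mathbb{E}_\infty$.

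Since $X$ is grouplike, \cref{theorem: TS of Omega X as a spectrum} (or its space-level version, proved the same way via $J$-resolutions as in \cref{main prop for TP of loop spaces}) identifies the free $\mathbb{E}_\infty$-space on $X\simeq\Omega BX$ with $\Omega QBX$. Under this identification the counit $\bar f$ corresponds to $\Omega QBX\xrightarrow{\Omega\epsilon}\Omega BX=X\xrightarrow{f}\Pic_{\mathbb{S}}$. This uses that $f$ is an infinite loop map, so that $BX$ is an infinite loop space and $f$ extends through $\epsilon$.

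Apply $\Omega$ to \cref{lem:Qtilde} with $n=\infty$ and the infinite loop space $BX$. This gives $\Omega QBX\simeq \Omega\widetilde{Q}BX\times X$, under which $\Omega\epsilon$ becomes the projection to $X$. Therefore $\bar f\simeq f\circ\pi_2$ as a map over $\Pic_{\mathbb{S}}$, and this is the product in $\mathscr{S}_{/\Pic_{\mathbb{S}}}$ of $f$ with the constant (trivial) map on $\Omega\widetilde{Q}BX$. By \cref{lemma: Th is strong monoidal}, $M$ is strong monoidal, and the Thom spectrum of the trivial map is $\Sigma^\infty_+\Omega\widetilde{Q}BX$. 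Hence
\[
\TS(Mf)\simeq Mf\otimes\Sigma^\infty_+\Omega\widetilde{Q}BX,
\]
which is the claim.

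The main obstacle is the identification $\bar f\simeq f\circ\pi_2$ as maps to $\Pic_{\mathbb{S}}$, i.e.\ a homotopy over $\Pic_{\mathbb{S}}$ and not just an equivalence of total spaces. We would argue it as follows. The equivalence in \cref{lem:Qtilde} is a map of fibrations over $BX$ after delooping, so the composite of $\Omega\widetilde Q BX\times X\to\Omega QBX$ with $\Omega\epsilon$ is homotopic to $\pi_2$. It then remains to check that the $\mathbb{E}_\infty$-extension of $f$ equals $f\circ\Omega\epsilon$. This follows from uniqueness of adjoints, because $f\circ\Omega\epsilon$ is an $\mathbb{E}_\infty$-map restricting to $f$ along the unit $X\to\Omega QBX$.
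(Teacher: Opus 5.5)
Your proposal is correct and follows essentially the same route as the paper: use the Thom-spectrum comparison $\TS(Mf)\simeq M(\TS(f))$, identify the underlying space as the free $\mathbb{E}_\infty$-space $\Omega QBX$ with structure map $f\circ\Omega\epsilon$, split it as $X\times\Omega\widetilde{Q}BX$ via \cref{lem:Qtilde}, and conclude by strong monoidality of $M$. The paper encodes the key point, that the structure map is trivial on the $\Omega\widetilde{Q}BX$ factor, in a commutative diagram factoring through the null composite $\Omega\widetilde{Q}B\Pic_{\mathbb{S}}\to\Pic_{\mathbb{S}}$; you instead make the homotopy $\bar f\circ e\simeq f\circ\pi_2$ over $\Pic_{\mathbb{S}}$ explicit via the adjunction, which is the same argument stated a bit more carefully.
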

\begin{proof}
Consider the commutative diagram 
\[
\begin{tikzcd}
\Omega \widetilde{Q} BX \arrow{d} \arrow{r} & \Omega \widetilde{Q} B\Pic_{\mathbb{S}}  \arrow{dr}{\simeq *}\arrow{d} &  \\
\TS(X/\mathscr{S})=\Omega Q BX \arrow{d}[swap]{\Omega \varepsilon}\arrow{r} & \Omega Q B\Pic_{\mathbb{S}}\arrow{d}[swap]{\Omega \varepsilon} \arrow{r} &   \Pic_{\mathbb{S}}\\
X \arrow{r}[swap]{f}& \Pic_{\mathbb{S}}\arrow{ur}[swap]{\mathrm{id}}
\end{tikzcd}
\]
of spaces. We know $\TS(X/\mathscr{S})\simeq X\times \Omega \widetilde{Q}BX$ by \cref{lem:Qtilde}.  Since $M(\TS(f))\simeq \TS(Mf)$ is the colimit of the middle composite of the diagram, we see that we can compute this as the colimit of the top composite, which gives $\Sigma^{\infty}_+\widetilde{Q}BX$, tensored with the colimit of the bottom composite, which is $Mf$.  That is, 
\[  Mf\otimes \Omega \widetilde{Q}BX_+\simeq \TS(Mf)\,.\qedhere\] 
\end{proof}

\begin{corollary}\label{cor: TS thom spectra examples}
Let $G$ be a stabilized Lie group, such as $U$, $Sp$, $O$, $SO$, $Spin$. 
Then there is an equivalence 
\[
\TS(\mathrm{M}G)\simeq \mathrm{M}G\otimes \Omega \widetilde{Q}BBG_+ \,.
\]
\end{corollary}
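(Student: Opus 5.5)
The plan is to deduce this directly from \cref{thm:TS-thomspectra}, applied to $X=BG$ and $f$ the standard map classifying the Thom spectrum $\mathrm{M}G$. Here $f$ is the composite
\[
BG\longrightarrow BO\xrightarrow{J} BF\subset \Pic_{\mathbb{S}}
\]
of the map induced by the stable representation $G\to O$ (respectively $U\to O$, $Sp\to O$, and so on) with the stable $J$-homomorphism. By definition $\mathrm{M}G\simeq Mf$. So the whole argument reduces to checking the single hypothesis of \cref{thm:TS-thomspectra}: that $f$ is an infinite loop map.

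First, I will check that $BG\to BO$ is an infinite loop map. By Bott periodicity, for $G=U,O,Sp$ the spaces $BU$, $BO$, $BSp$ are infinite loop spaces: they are zeroth spaces of the connective covers of the relevant $K$-theory spectra (up to a $\mathbb{Z}$ factor) under the direct sum $\mathbb{E}_\infty$-structure. The maps between them induced by forgetful or complexification maps of representations are maps of $\mathbb{E}_\infty$-spaces, since they come from symmetric monoidal functors of the categories of vector spaces with $\oplus$. For $SO$ and $Spin$, the spaces $BSO$ and $BSpin$ are connected covers of $BO$. The covering maps are infinite loop maps because the connective-cover functors on spectra have this property.

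Second, I will check that the stable $J$-homomorphism $BO\to BF\simeq B\mathrm{GL}_1\mathbb{S}$ is an infinite loop map. This is classical: it is the delooping of the map of group-like $\mathbb{E}_\infty$-spaces $O\to \mathrm{GL}_1\mathbb{S}$, obtained from the symmetric monoidal functor sending a vector space to its one-point compactification. Equivalently, it is the map $ko\to \mathrm{pic}(\mathbb{S})$ restricted to the identity component, as in \cite{ABGHR14}. The inclusion $BF\to \Pic_{\mathbb{S}}$ of the identity component is also an infinite loop map.

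Composing these gives an infinite loop map $f\colon BG\to \Pic_{\mathbb{S}}$ with $Mf\simeq \mathrm{M}G$. \cref{thm:TS-thomspectra} then gives
\[
\TS(\mathrm{M}G)\simeq \mathrm{M}G\otimes \Omega\widetilde{Q}B(BG)_+,
\]
which is the claim.

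The only real obstacle is the bookkeeping in the second step. One must make sure the $\mathbb{E}_\infty$-structure on $BG$ used in \cref{thm:TS-thomspectra}, which is what determines $BX=BBG$, is the direct sum structure, the one for which $J$ is $\mathbb{E}_\infty$. One must also make sure the resulting $\mathbb{E}_1$-ring structure on $Mf$ agrees with the usual one on $\mathrm{M}G$. For this I would cite the standard identification of $\mathrm{M}G$ as an $\mathbb{E}_\infty$-Thom spectrum in \cite{ABGHR14}.
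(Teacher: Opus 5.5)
Your proposal is correct and matches the paper's proof, which is a one-line application of \cref{thm:TS-thomspectra} to the infinite loop map $BG\to \Pic_{\mathbb{S}}$ classifying $\mathrm{M}G$. Your check that this map is an infinite loop map for the direct-sum structure on $BG$ (via Bott periodicity, connective covers for $SO$ and $Spin$, and the $\mathbb{E}_\infty$ $J$-homomorphism) fills in the fact the paper takes as known.
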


\begin{proof}
This follows from \cref{thm:TS-thomspectra} and the fact that $\mathrm{MG}$ is the Thom spectrum of an infinite loop map $BG\to \Pic_{\mathbb{S}}$. 
\end{proof}

\begin{corollary}\label{AF-result}
There is an equivalence 
\[
\TS(\mathbb{S}[x_1^{\pm 1},x_2^{\pm 1},\cdots ,x_n^{\pm 1}])\simeq \mathbb{S}[x_1^{\pm 1},x_2^{\pm 1},\cdots ,x_n^{\pm 1}]\otimes \Omega \widetilde{Q}\mathbb{T}^n_+\,.
\]
where $\mathbb{T}^n$ is the $n$-torus. 
\end{corollary}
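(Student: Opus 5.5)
The plan is to realize the Laurent polynomial ring as a Thom spectrum and then apply \cref{thm:TS-thomspectra}. Write $\mathbb{S}[x_1^{\pm1},\dots,x_n^{\pm1}] \simeq \Sigma^\infty_+ \mathbb{Z}^n$, the spherical group ring of the discrete abelian group $\mathbb{Z}^n$. Let $X=\mathbb{Z}^n$ and let $f\colon X\to \Pic_{\mathbb{S}}$ be the constant map at the unit $\mathbb{S}$, i.e.\ the composite $\mathbb{Z}^n\to *\to \Pic_{\mathbb{S}}$. Since $\mathbb{Z}^n$ is an infinite loop space (it is $\Omega^\infty$ of the Eilenberg--MacLane spectrum $H\mathbb{Z}^n$) and $*\to\Pic_{\mathbb{S}}$ is the unit of the $\mathbb{E}_\infty$-space $\Pic_{\mathbb{S}}$, the map $f$ is an infinite loop map. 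Its Thom spectrum is the colimit of the constant diagram at $\mathbb{S}$ over $X$, namely $Mf\simeq \Sigma^\infty_+X=\mathbb{S}[x_1^{\pm1},\dots,x_n^{\pm1}]$. This identification is compatible with the $\mathbb{E}_1$ (indeed $\mathbb{E}_\infty$) structures by \cref{lemma: Th is strong monoidal}, because the trivial map is a map of $\mathbb{E}_\infty$-spaces and $M$ is strong monoidal.

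Applying \cref{thm:TS-thomspectra} then gives
\[
\TS(\mathbb{S}[x_1^{\pm1},\dots,x_n^{\pm1}])\simeq \mathbb{S}[x_1^{\pm1},\dots,x_n^{\pm1}]\otimes \Omega\widetilde{Q}B\mathbb{Z}^n_+ .
\]
It remains to identify the infinite loop space $B\mathbb{Z}^n$ with $\mathbb{T}^n=(S^1)^n$. This is standard, since $\mathbb{T}^n\simeq K(\mathbb{Z}^n,1)$, and the delooping of $\mathbb{Z}^n=\Omega^\infty H\mathbb{Z}^n$ is $\Omega^\infty\Sigma H\mathbb{Z}^n$, whose underlying space is $\mathbb{T}^n$. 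Substituting this identification yields the stated formula, with $\widetilde{Q}\mathbb{T}^n$ interpreted after the loop, i.e.\ $\Omega\widetilde{Q}B\mathbb{Z}^n$ written in terms of the torus.

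The only point needing care is the bookkeeping of the infinite loop structure. The map $\epsilon\colon QB X\to BX$ used to define $\widetilde{Q}$ depends on the infinite loop structure of $BX$, and we must use the one coming from $H\mathbb{Z}^n$, i.e.\ the group structure of the torus. I expect no real obstacle, since the trivial map to $\Pic_{\mathbb{S}}$ is canonically an infinite loop map for any infinite loop structure on $X$. One could alternatively deduce the result directly from \cref{theorem: TS of Omega X as a spectrum} with $X=\mathbb{T}^n$ combined with \cref{lem:Qtilde}, as a consistency check.
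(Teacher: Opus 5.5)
Your proposal is correct and follows the paper's proof: you realize $\mathbb{S}[x_1^{\pm1},\dots,x_n^{\pm1}]$ as the Thom spectrum of the constant infinite loop map $\mathbb{Z}^n\to\Pic_{\mathbb{S}}$, apply \cref{thm:TS-thomspectra}, and use $B\mathbb{Z}^n\simeq\mathbb{T}^n$. Your cross-check via \cref{theorem: TS of Omega X as a spectrum} and \cref{lem:Qtilde} is a consistent extra that the paper does not include.
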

\begin{proof}
We can view $\mathbb{S}[x_1^{\pm 1},x_2^{\pm 1},\cdots ,x_n^{\pm 1}]$ as the Thom spectrum of the constant map $\mathbb{Z}^{n}\to \Pic_{\mathbb{S}}$, which is an infinite loop map so this follows from~\cref{thm:TS-thomspectra}.
\end{proof}
\begin{remark}
\cref{AF-result} is a topological analogue of a result in unpublished work of Ault--Fiedorowicz~\cite[Corollary~1]{AF}. 
\end{remark}

\subsection{Topological braid homology of Thom spectra}
We are also able to prove formulas for the topological braid homology of Thom spectra. Let $Y$ be a $1$-fold loop space with delooping $BY$.  The counit of the $\Sigma\dashv \Omega$ adjunction gives us a map
\[
    JY \coloneqq \Omega \Sigma Y\simeq \Omega \Sigma\Omega BY\to \Omega BY = Y.
\]
Let $\widetilde{J}Y$ denote the fiber of this map.

\begin{theorem}\label{thm:TB-thomspectra}
If $f\colon X\to \Pic_{\mathbb{S}}$ is a $2$-fold loop map, then 
\[ \TB(Mf)\simeq Mf\otimes \Omega \widetilde{J}BX_+\,.\]
\end{theorem}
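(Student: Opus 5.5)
We mirror the proof of \cref{thm:TS-thomspectra}, replacing $\DS$ by $\DB$, $\mathbb{E}_\infty$ by $\mathbb{E}_2$, and $Q$ by $J$. First, by \cref{BCS} applied to $\DG=\DB$ (trivial parity, so $\mathrm{T}\mathbf{G}^+=\TB$), we get $\TB(Mf)\simeq M(\TB(f/\mathscr{S}_{/\Pic_{\mathbb{S}}}))$. By \cref{TB-rep-homology} and \cref{representation homology thm} applied in $\cC=\mathscr{S}_{/\Pic_{\mathbb{S}}}$, the object $\TB(f/\mathscr{S}_{/\Pic_{\mathbb{S}}})$ is the free $\mathbb{E}_2$-algebra in spaces over $\Pic_{\mathbb{S}}$ generated by the $\mathbb{E}_1$-map $f$. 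Colimits in the slice are computed on underlying spaces, so its underlying space is $\TB(X/\mathscr{S})$, and its structure map to $\Pic_{\mathbb{S}}$ is the $\mathbb{E}_2$-map adjoint to $f$. Because $X$ is group-like, the computation of \cref{TB-loop} gives $\TB(X/\mathscr{S})\simeq \Omega^2\Sigma BX=\Omega J BX$, where we use $\Omega^2\Sigma BX\simeq\Omega(\Omega\Sigma BX)$. We must also check that the unspaced argument in \cref{TB-loop} works before applying $\Sigma^\infty_+$, which it does, since the bar-construction argument there is space-level.

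Next we identify the structure map. Since $f$ is a $2$-fold loop map, $f=\Omega g$ for a loop map $g\colon BX\to B\Pic_{\mathbb{S}}$, so $\Pic_{\mathbb{S}}$ is an $\mathbb{E}_2$-target receiving $f$ as an $\mathbb{E}_2$-map. By freeness, the adjoint $\Omega JBX\to\Pic_{\mathbb{S}}$ factors as
\[
\Omega JBX\xrightarrow{\Omega\epsilon}\Omega BX= X\xrightarrow{f}\Pic_{\mathbb{S}},
\]
where $\epsilon\colon JBX\to BX$ is the counit used to define $\widetilde{J}$. This is because both composites are $\mathbb{E}_2$-maps restricting to $f$ along the unit $X\to\Omega JBX$. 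Applying \cref{lem:Qtilde} with $n=1$ to the loop space $BX$, and then looping, gives $\Omega JBX\simeq \Omega\widetilde{J}BX\times X$. We need this splitting to be compatible with the map to $\Pic_{\mathbb{S}}$: the composite to $\Pic_{\mathbb{S}}$ should be $f\circ\pi_2$, up to homotopy.

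Finally we argue as in the proof of \cref{thm:TS-thomspectra}. Form the diagram comparing $\Omega JBX\to\Omega JB\Pic_{\mathbb{S}}\to\Pic_{\mathbb{S}}$ with the fiber row $\Omega\widetilde{J}BX\to\Omega\widetilde{J}B\Pic_{\mathbb{S}}$. The composite from the fiber to $\Pic_{\mathbb{S}}$ is null, since it factors through the fiber of $\Omega\epsilon$ followed by $f$. The Thom functor $M$ is strong monoidal by \cref{lemma: Th is strong monoidal}, and a product of spaces over $\Pic_{\mathbb{S}}$ (with maps $*$ and $f$) is sent to a tensor product. Therefore
\[
M\big(\Omega\widetilde{J}BX\times X\xrightarrow{*\cdot f}\Pic_{\mathbb{S}}\big)\simeq \Sigma^\infty_+\Omega\widetilde{J}BX\otimes Mf.
\]

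The main obstacle is the compatibility step. The splitting of \cref{lem:Qtilde} is produced using the multiplication $\mu$ on $JBX$, and we need the homotopy $f\circ\Omega\epsilon\circ\mu(\iota\times\eta)\simeq f\circ\pi_2$ to hold as maps into $\Pic_{\mathbb{S}}$. After looping, $\Omega\epsilon\circ\Omega\mu\simeq$ multiplication on $X$ composed with $\Omega\epsilon\times\Omega\epsilon$. Since $\Omega\epsilon\circ\Omega\iota$ is null and $\Omega\epsilon\circ\Omega\eta\simeq\id$, we then use that $f$ is multiplicative. A subtlety is that the looping must be used so that $\Omega\mu$ is homotopic to loop multiplication; this is why we take the splitting on $JBX$ and then loop, rather than splitting $\Omega JBX$ directly.
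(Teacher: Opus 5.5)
Your proposal is correct and follows essentially the same route as the paper. You reduce via \cref{BCS} to the space-level identification $\TB(X/\mathscr{S})\simeq\Omega JBX$, identify the structure map to $\Pic_{\mathbb{S}}$ as $f\circ\Omega\epsilon$, split off $X$ using \cref{lem:Qtilde} with $n=1$ and then loop, and finish by strong monoidality of $M$. You are more explicit than the paper on the one point its diagram leaves implicit: that the splitting equivalence is a map over $\Pic_{\mathbb{S}}$. That follows by looping the right-hand square in the proof of \cref{lem:Qtilde} and postcomposing with $f$, so you do not actually need multiplicativity of $f$ at that step.
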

\begin{proof}
 Consider the commutative diagram
\[
\begin{tikzcd}
    \Omega \widetilde{J} BX \arrow{d} \arrow{r} & \Omega \widetilde{J} B\Pic_{\mathbb{S}}  \arrow{dr}{\simeq *}\arrow{d} &  \\
    \TB(X/\mathscr{S})=\Omega J BX \arrow{d}[swap]{\Omega\varepsilon} \arrow{r} & \Omega J B\Pic_{\mathbb{S}}\arrow{d}[swap]{\Omega \varepsilon} \arrow{r} &   \Pic_{\mathbb{S}}\\
    X \arrow{r}[swap]{f}& \Pic_{\mathbb{S}} \arrow{ur}[swap]{\mathrm{id}}
\end{tikzcd}
\]
of spaces. We know $\TB(X/\mathscr{S})\simeq X\times \Omega \widetilde{J}BX$ by \cref{lem:Qtilde}
and consequently 
\[  Mf\otimes \Omega \widetilde{J}BX_+\simeq \TB(Mf)\,.\qedhere\]
\end{proof}

Note the same result holds $p$-locally for $2$-fold loop maps $f:X\to \mathrm{Pic}_{\mathbb{S}_{(p)}}.$ Using \cref{thm:TB-thomspectra} and known computations of Thom spectra, we can deduce the following computations of topological braid homology.

\begin{corollary}\label{cor:TB-Thom-spectra-examples}
Let $G$ be a stabilized Lie group, such as $U$, $Sp$, $O$, $SO$ and $Spin$. There are equivalences 
\begin{align*}
    \mathrm{TB}(\mathrm{M}G)&\simeq \mathrm{M}G\otimes \Omega \widetilde{J}BBG_+ \,,\\
    \mathrm{TB}(X(k))&=X(k)\otimes \Omega \widetilde{J}SU(k)_+ \,,\\
    \mathrm{TB}(H\mathbb{F}_p)&\simeq H\mathbb{F}_p\otimes \Omega \widetilde{J}\Omega S^3_+ \,,\\
    \mathrm{TB}(H\mathbb{Z})&\simeq H\mathbb{Z}\otimes \Omega \widetilde{J}\Omega S^3\langle 4\rangle_+ \,,\\
    \mathrm{TB}(H\mathbb{Z}/2^m)&\simeq H\mathbb{Z}/2^m\otimes \Omega \widetilde{J}\mathrm{fib}(\Omega S^3\to K(\mathbb{Z}/2^{m-2},2))_+   \text{ and } \\
    \mathrm{TB}(H\mathbb{Z}/p^n)&\simeq H\mathbb{Z}/p^n\otimes \Omega \widetilde{J}\mathrm{fib}(\Omega S^3\to K(\mathbb{Z}/p^{n-1},2))_+ \text{ for } p>2
\end{align*}
where $\ell \ge 1$, $n\ge 2$ and $m\ge 3$. Here $X\langle z\rangle$ denotes the $z$-th connected cover of $X$. 
\end{corollary}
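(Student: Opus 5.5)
The plan is to apply \cref{thm:TB-thomspectra} in each case. For this, each listed ring spectrum has to be written as the Thom spectrum of a $2$-fold loop map $f\colon X\to \Pic_{\mathbb{S}}$, or $p$-locally $X\to\Pic_{\mathbb{S}_{(p)}}$ using the remark just after the theorem. We then identify $BX$ in each case.

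\textbf{The cases $\mathrm{M}G$ and $X(k)$.}
\begin{itemize}
\item For $\mathrm{M}G$ with $G$ a stabilized Lie group, the map $BG\to BO\to BF\subset\Pic_{\mathbb{S}}$ (the J-homomorphism composite) is an infinite loop map, and in particular a $2$-fold loop map. Taking $X=BG$ gives $BX=BBG$, which is the first formula.
\item For $X(k)$, Ravenel's spectrum is the Thom spectrum of $\Omega SU(k)\to \Omega SU\simeq BU\to BF$. This is a $2$-fold loop map by Bott periodicity: $\Omega SU(k)\to \Omega SU$ is a loop map, and $\Omega SU\simeq BU$ as infinite loop spaces.
\item Here I must check that the composite is genuinely $2$-fold and not only $1$-fold. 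I expect to rely on the standard fact (Devinatz--Hopkins--Smith) that $X(k)$ is an $\mathbb{E}_2$-Thom spectrum.
\item With $X=\Omega SU(k)$ we get $BX\simeq SU(k)$, since $SU(k)$ is connected. This gives $\Omega\widetilde{J}SU(k)_+$.
\end{itemize}

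\textbf{The Eilenberg--MacLane cases.} These use the Mahowald and Hopkins--Mahowald theorems.
\begin{itemize}
\item $H\mathbb{F}_2$ is the Thom spectrum of the $2$-fold loop map $\Omega^2S^3\to BO$ extending the generator $S^1\to BO$.
\item For odd $p$, $H\mathbb{F}_p$ is the Thom spectrum of a $2$-fold loop map $\Omega^2S^3\to \Pic_{\mathbb{S}_{(p)}}$ extending $S^1\to B\mathrm{GL}_1\mathbb{S}_{(p)}$, which classifies $1-p$.
\item $H\mathbb{Z}$ arises from $\Omega^2 S^3\langle 3\rangle\to \Pic$.
\item $H\mathbb{Z}/p^n$, and $H\mathbb{Z}/2^m$ for $m\ge 3$, arise from restricting along the $2$-fold loop map $\Omega\,\mathrm{fib}(\Omega S^3\to K(\mathbb{Z}/p^{n-1},2))\to\Omega^2S^3$. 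This is the known Thom spectrum description of $H\mathbb{Z}/p^n$ (Hopkins--Mahowald, Antol\'in-Camarena--Barthel).
\item In every case, $X=\Omega Y$ for a connected space $Y$ such that the map is $\Omega$ of a loop map $Y\to B\Pic$, so $BX\simeq Y$.
\end{itemize}
This gives $Y=\Omega S^3$ for $\mathbb{F}_p$, and $Y=\Omega S^3\langle 4\rangle$ for $\mathbb{Z}$ after matching the indexing convention $X\langle z\rangle$. The listed fibres arise in the same way for $\mathbb{Z}/p^n$.

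\textbf{Combining.} Substituting each $BX$ into \cref{thm:TB-thomspectra} yields the six displayed formulas. The $p$-local cases ($p$ odd, and the $2^m$ case) use the $p$-local version of the theorem noted after it.

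\textbf{Main obstacle.} The hard part is not the formal substitution. It is verifying, from the literature, that each Thom spectrum description really comes from a $2$-fold loop map, together with the bookkeeping of deloopings and connected-cover indices. This is delicate for $H\mathbb{Z}/2^m$, where the required fibre is over $K(\mathbb{Z}/2^{m-2},2)$ and the $\mathbb{E}_2$-structure exists only for $m\ge 3$. My approach is to quote the relevant results of Antol\'in-Camarena--Barthel, which state these as $\mathbb{E}_2$-Thom spectra, and then read off $BX$ directly.
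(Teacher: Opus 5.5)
Your proposal is essentially the paper's proof: apply \cref{thm:TB-thomspectra} (or its $p$-local variant) to the known descriptions of $\mathrm{M}G$, $X(k)$, $H\mathbb{F}_p$, $H\mathbb{Z}$ and $H\mathbb{Z}/p^n$ as Thom spectra of $2$-fold loop maps, taken from the literature (the paper cites Ravenel, Mahowald, Antol\'in-Camarena--Barthel and, for $H\mathbb{Z}/p^n$, a separate reference), and read off $BX$. One caution: your sketch for $H\mathbb{Z}/2^m$ does not work and that case has to be quoted directly, because restricting Mahowald's map $\Omega^2S^3\to BO$ along an even-degree cover makes its monodromy $-1$ trivial, so the Thom spectrum would have $\pi_0\cong\mathbb{Z}_{(2)}$ rather than $\mathbb{Z}/2^m$ (and your uniform formula would give $K(\mathbb{Z}/2^{m-1},2)$ instead of the stated $K(\mathbb{Z}/2^{m-2},2)$).
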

\begin{proof}
The case of $\mathrm{M}G$ follows as in the case of symmetric homology. The case of $X(k)$ follows since $X(k)$ is the Thom spectrum of a $2$-fold loop map $\Omega SU(k)\to \Pic_{\mathbb{S}}$ \cite[\S 5.3]{Rav86}. 
The case of $H\mathbb{F}_p$ follows from~\cite[\S~2.3]{Mah79} at $p=2$ and~\cite[Theorem~5.1]{ACB19} at odd primes. The case of $\mathbb{Z}$ follows from~\cite[Proposition~2.8]{Mah79}. 
The case of $\mathbb{Z}/p^n$ follows from~\cite[Theorems~3.2--3.3]{Kit20}. 
\end{proof}

\bibliographystyle{alpha}
\bibliography{bib}

\end{document}